\theoremstyle{plain}
\newtheorem{theorem}{Theorem}
\newtheorem{lemma}{Lemma}[section]
\newtheorem{claim}[lemma]{Claim}
\newtheorem{proposition}[lemma]{Proposition}
\newtheorem{conjecture}[lemma]{Conjecture}
\newtheorem{problem}[lemma]{Problem}
\newtheorem{remark}[lemma]{Remark}
\newtheorem{definition}[lemma]{Definition}
\newcommand{\bivec}[1]{\accentset{\leftrightarrow}{#1}}
\author
{
Raphael Steiner \thanks{Institute of Mathematics, Technische Universit\"at Berlin, Germany} \thanks{Email: \texttt{steiner@math.tu-berlin.de}. Funded by DFG-GRK 2434 Facets of Complexity.}
}
\date{\today}
\title{On coloring digraphs with forbidden induced subgraphs}
\begin{document}
\maketitle

\begin{abstract}
We prove a conjecture by Aboulker, Charbit and Naserasr~\cite{aboulker} by showing that every oriented graph in which the out-neighborhood of every vertex induces a transitive tournament can be partitioned into two acyclic induced subdigraphs. We prove multiple extensions of this result to larger classes of digraphs defined by a finite list of forbidden induced subdigraphs. 
We thereby resolve several special cases of an extension of the famous Gy\'{a}rf\'{a}s-Sumner conjecture to directed graphs stated in~\cite{aboulker}.
\end{abstract}

\section{Introduction}
\paragraph{Notation.}
All graphs and digraphs considered in this paper are simple, that is, they are loopless, between two vertices in a graph there is at most one connecting edge, and between two vertices in a digraph there is at most one arc in each direction. We say that a digraph is an \emph{oriented graph} if it does not contain directed cycles of length two (digons). Given a digraph $D$, we denote by $V(D)$ its vertex-set and by $A(D) \subseteq V(D) \times V(D)$ its set of arcs. We put $v(D):=|V(D)|, a(D):=|A(D)|$. Arcs are denoted as $(u,v)$, where $u$ is the tail of the arc and $v$ is its head. For $v \in V(D)$ we denote by $N_D^+(u), N_D^-(u)$ the sets of out- and in-neighbors of $v$ in $D$, respectively. We drop the subscript $D$ if its is clear from context. We generalize this notation to vertex subsets by putting $N_D^+(X):=\bigcup_{x \in X}{N_D^+(x)}\setminus X$ and $N_D^-(X):=\bigcup_{x \in X}{N_D^-(x)}\setminus X$ for all $X \subseteq V(D)$.  We further denote by $D[X]$ the subdigraph with vertex set $X$ and arc-set $(X \times X) \cap A(D)$. Any digraph of the form $D[X]$ with $\emptyset \neq X \subseteq V(D)$ is called an \emph{induced subdigraph} of $D$. Given a set $X$ of vertices or arcs, we denote by $D-X$ the digraph obtained from $D$ by deleting the elements in $X$. 

For a graph $G$, we denote by $\bivec{G}$ the directed graph with $V(\bivec{G}):=V(G)$ and $A(\bivec{G}):=\{(x,y),(y,x)|xy \in E(G)\}$ and call it the \emph{biorientation} of $G$. For an integer $k \ge 1$ we denote by $\bivec{K}_k$ the biorientation of $K_k$ and call it the \emph{complete digraph} of order $k$, by $\vec{K}_k$ we denote the \emph{transitive tournament} of order $k$, and by $\overline{K}_k$ we denote the digraph consisting of $k$ isolated vertices. By $S_k^+, S_k^-$ we denote the orientation of the star with $k$ leaves in which all arcs are oriented outwards (inwards). By $W_k^+$ and $W_k^-$ we denote the oriented wheel graphs obtained by connecting the leaves of $S_k^+$ and $S_k^-$, respectively, by a directed cycle.
\medskip 

A classical research topic in the theory of graph coloring is to study the chromatic number of graph classes defined by forbidden \emph{induced} subgraphs. Maybe the most famous open problem in this area is the \emph{Gy\'{a}rf\'{a}s-Sumner Conjecture}, which states the following.

\begin{conjecture}[Gy\'{a}rf\'{a}s~\cite{gyarfas} 1975, Sumner~\cite{sumner} 1981]
If $F$ is a forest and $k \in \mathbb{N}$, then there exists an integer $c(F,k) \ge 1$ such that every graph $G$ excluding $F$ and $K_k$ as induced subgraphs satisfies $\chi(G) \le c(F,k)$.
\end{conjecture}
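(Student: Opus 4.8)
The plan is to recast the statement in the language of $\chi$-boundedness. Call a class of graphs $\chi$-bounded if there is a function $f$ with $\chi(G)\le f(\omega(G))$ for every $G$ in the class; since forbidding $K_k$ merely fixes $\omega(G)<k$, the conjecture is exactly the assertion that for every forest $F$ the class of graphs with no induced copy of $F$ is $\chi$-bounded. The overall strategy I would pursue is a double induction on the structure of $F$: first reduce from arbitrary forests to single trees, and then reduce from a tree to the smaller trees obtained by deleting its root.

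First I would reduce to the case where $F$ is a tree. A forest is a disjoint union of trees $T_1,\dots,T_m$, and an induced copy of $F$ in $G$ consists of induced copies of the $T_i$ that are pairwise non-adjacent. The mechanism is that a graph of enormous chromatic number and bounded clique number must contain many induced subgraphs that are pairwise ``far apart'' in $G$---hence pairwise non-adjacent---and each of which still has large chromatic number; applying the single-tree case inside each such piece then yields the required mutually non-adjacent copies of the $T_i$ simultaneously. Formalizing ``far-apart pieces retain large chromatic number'' is the technical heart of this step, but it is by now a standard maneuver in the area.

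For a single tree I would induct on its radius. The base case is a path, which is Gy\'{a}rf\'{a}s's theorem; its proof rests on the observation that a shortest path between two vertices is always an \emph{induced} path, so a connected graph with no induced $P_t$ has diameter at most $t-2$, after which a BFS-layering argument controls $\chi$ in terms of $\omega$ and closes an induction on the clique number. To handle a tree $T$ of larger radius, rooted at $r$ with pendant subtrees $T_1,\dots,T_d$ of smaller radius, one argues that if $\chi(G)$ is astronomically large then there is a vertex $v$ whose neighborhood still carries large chromatic number; playing $v$ as the root $r$ and embedding the $T_i$ recursively (available by the radius induction) inside that neighborhood produces an induced copy of $T$. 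This is the Kierstead--Penrice--Zhu line of argument, which already settles the conjecture for stars, for paths (Gy\'{a}rf\'{a}s), and for trees of radius two, together with several further special families.

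The hard part---and the reason this is a conjecture rather than a theorem---is that the radius induction does not close in general. Embedding the pendant subtrees requires locating a vertex (or bounded set) whose neighborhood retains large chromatic number after all the forbidden-subgraph constraints have been imposed, and no technique is known that guarantees this once the tree is allowed to have unbounded radius; the neighborhoods one finds can have their chromatic number collapse. Consequently I would expect the plan above to reprove only the known bounded-radius cases, and I do not expect it to deliver the full conjecture: a genuinely new idea seems to be required to embed deep trees, which is precisely where every serious attack on Gy\'{a}rf\'{a}s--Sumner has so far stalled.
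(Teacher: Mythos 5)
The statement you were asked to prove is not a theorem of this paper at all: it is the Gy\'{a}rf\'{a}s--Sumner Conjecture itself, which the paper states purely as background motivation and explicitly describes as unresolved (``the Gy\'{a}rf\'{a}s-Sumner conjecture has not yet been resolved in full generality''). There is therefore no proof in the paper to compare your attempt against, and your proposal---to its credit---does not pretend to supply one. Your reformulation in terms of $\chi$-boundedness is correct, your reduction from forests to single trees is sound in outline (the paper proves the exactly analogous reduction for its directed version of the conjecture, via a decomposition into a copy of the first tree, the neighborhood of that copy, and the remainder), and your survey of the settled cases---Gy\'{a}rf\'{a}s's theorem for paths via shortest-path/BFS-layering arguments, the Kierstead--Penrice result for trees of radius at most two, Scott's result for subdivided stars---matches the partial results the paper itself cites.

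The gap you identify at the end is the genuine one, and it is why this remains a conjecture: the radius induction requires locating a vertex whose neighborhood still has large chromatic number \emph{after} the inductive constraints are imposed, and no known technique guarantees this once the tree has unbounded radius. Since you concede that your plan only recovers the known bounded-radius special cases, what you have written is an accurate account of why the problem is open, not a proof. The correct assessment of your submission is therefore: no error in what you assert, but also no proof---because none exists, in this paper or anywhere else to date.
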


Note that the result claimed by the Gy\'{a}rf\'{a}s-Sumner-conjecture would be best-possible in the following strong sense: Let $\mathcal{F}$ be finite set of graphs, such that the set of graphs $\text{Forb}_\text{ind}(F)$ excluding each member of $\mathcal{F}$ as an induced subgraph has bounded chromatic number. Then at least one member of $\mathcal{F}$ must be a complete graph (for otherwise all complete graphs would be contained in $\text{Forb}_\text{ind}(F)$, and hence the chromatic number of graphs in this class would be unbounded). Similarly, at least one member of $\mathcal{F}$ must be a forest: If every graph in $\mathcal{F}$ would include a cycle, then let $g$ be the maximum length of a cycle appearing in a member of $\mathcal{F}$. It is now clear that every graph whose girth exceeds $g$ is contained in $\text{Forb}_\text{ind}(F)$, but such graphs may have arbitrarily large chromatic number due to a classical result of Erd\H{o}s~\cite{erdos}. Hence, the Gy\'{a}rf\'{a}s-Sumner conjecture may be restated as follows: If $F$ is a finite list of graphs, then $\text{Forb}_\text{ind}(F)$ has bounded chromatic number if and only if $F$ contains a clique and a forest.

Despite being quite popular, the Gy\'{a}rf\'{a}s-Sumner conjecture has not yet been resolved in full generality. Some special cases for which the conjecture has been proved are when $F$ is a subdivision of a star~\cite{scott}, a tree of radius at most two~\cite{kierstead}, a certain kind of caterpillar~\cite{chudnovsky1} or a certain type of a so-called (multi-)broom~\cite{scott2}. 

A widely known generalization of the chromatic number to directed graphs is the \emph{dichromatic number}, which was introduced around 1980 by Erd\H{o}s~\cite{erdos2} and Neumann-Lara~\cite{neumannlara}. Given a digraph $D$, an \emph{acyclic $k$-coloring} of $D$ is an assignment $c:V(D) \rightarrow S$ of colors from a finite color set $S$ of size $k$ to the vertices such that every \emph{color class} $c^{-1}(i), i \in S$ induces an \emph{acyclic} subdigraph of $D$. In other words, an acyclic coloring is a vertex-coloring avoiding monochromatic directed cycles. The \emph{dichromatic number} $\vec{\chi}(D)$ is the smallest integer $k$ for which an acyclic $k$-coloring of $D$ exists. Since around 2000, the dichromatic number has been quite popular in graph theoretical research, and many results have established that is shares interesting structural properties with the chromatic number, see~\cite{aboulker2, aboulker3, andres, bangjensen, heros, bokal, harutyunyan1, harutyunyan2, harutyunyan3, hochstattler, mohar, moharwu} for only a small fraction of recent results on the topic. 

In the same spirit, Aboulker, Charbit and Naserasr~\cite{aboulker} recently initiated the systematic study of the relation between excluded induced subdigraphs and the dichromatic number and asked the following intriguing question.

\begin{problem}\label{prob:excludeinduceddir}
Let $\mathcal{F}$ be a finite set of digraphs. Under which circumstances does there exist $C \in \mathbb{N}$ such that every digraph $D$ without an induced subdigraph isomorphic to a member of $\mathcal{F}$ satisfies $\vec{\chi}(D) \le C$?
\end{problem}

Following the terminology introduced by Aboulker et al.~\cite{aboulker}, we denote by $\text{Forb}_\text{ind}(\mathcal{F})$ the set of digraphs containing no induced subdigraph isomorphic to a member of $\mathcal{F}$. We say that $\mathcal{F}$ is \emph{heroic} if the digraphs in $\text{Forb}_\text{ind}(\mathcal{F})$ have bounded dichromatic number and in this case we denote $\vec{\chi}(\text{Forb}_{\text{ind}}(\mathcal{F})):=\max\{\vec{\chi}(D)|D \in \text{Forb}_{\text{ind}}(\mathcal{F})\}$.

Just as in the undirected case, Aboulker et al.~\cite{aboulker} observed several necessary conditions for a finite set $\mathcal{F}$ of digraphs to be heroic, which we summarize in the following. 

\begin{proposition}[cf.~\cite{aboulker}]\label{prop:necess}
Let $\mathcal{F}$ be a finite heroic set of digraphs. Then $\mathcal{F}$ must contain
\begin{itemize}
\item a complete digraph $\bivec{K}_k$ for some $k \in \mathbb{N}$,
\item a biorientation of a forest,
\item an orientation of a forest,
\item a tournament, i.e., an orientation of a complete graph.
\end{itemize}
\end{proposition}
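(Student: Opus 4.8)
The plan is to prove the contrapositive of each of the four items separately: assuming $\mathcal{F}$ omits a given type of digraph, I would exhibit a family of digraphs in $\text{Forb}_\text{ind}(\mathcal{F})$ whose dichromatic numbers are unbounded, contradicting the assumption that $\mathcal{F}$ is heroic. The two cases governed by ``complete'' objects are the most transparent, since complete digraphs and tournaments are closed under taking induced subdigraphs. If $\mathcal{F}$ contains no complete digraph, then every induced subdigraph of $\bivec{K}_k$ is again a complete digraph $\bivec{K}_j$ with $j \le k$, so no member of $\mathcal{F}$ embeds as an induced subdigraph and $\bivec{K}_k \in \text{Forb}_\text{ind}(\mathcal{F})$ for every $k$; since any two vertices of $\bivec{K}_k$ span a digon, each color class of an acyclic coloring is a single vertex and $\vec{\chi}(\bivec{K}_k)=k \to \infty$. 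Likewise, every induced subdigraph of a tournament is a tournament, so if $\mathcal{F}$ contains no tournament then all tournaments lie in $\text{Forb}_\text{ind}(\mathcal{F})$; as tournaments are well known to have unbounded dichromatic number (random tournaments on $n$ vertices already achieve $\vec{\chi} = \Theta(n/\log n)$), heroicity fails.

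For the biorientation-of-a-forest item I would use the biorientations of Erd\H{o}s's high-girth high-chromatic graphs. The key identity is $\vec{\chi}(\bivec{G}) = \chi(G)$: an edge of $G$ becomes a digon, so a color class induces an acyclic subdigraph of $\bivec{G}$ exactly when it is independent in $G$. Moreover every induced subdigraph of $\bivec{G}$ is of the form $\bivec{H}$ for an induced subgraph $H \subseteq G$, so the only members of $\mathcal{F}$ that could embed in $\bivec{G}$ are biorientations of graphs. Assuming $\mathcal{F}$ contains no biorientation of a forest, each such member is $\bivec{H}$ with $H$ containing a cycle of length at most $v(H)$. Setting $g_0 := \max\{ v(H) \}$ over these finitely many members and invoking Erd\H{o}s~\cite{erdos} to pick $G$ with girth exceeding $g_0$ and $\chi(G) \ge k$, the graph $G$ contains none of the relevant $H$ as a subgraph, hence $\bivec{G} \in \text{Forb}_\text{ind}(\mathcal{F})$ with $\vec{\chi}(\bivec{G}) = \chi(G) \ge k$.

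The orientation-of-a-forest item is handled analogously, but requires the directed analogue of Erd\H{o}s's theorem: for all $k$ and $g$ there exist oriented graphs of dichromatic number at least $k$ whose underlying graph has girth larger than $g$. Granting this, assume $\mathcal{F}$ contains no orientation of a forest and take such an oriented graph $D$ with $\vec{\chi}(D) \ge k$ and underlying girth exceeding $g_1 := \max\{v(F) : F \in \mathcal{F}\}$. Since $D$ has no digon, no member of $\mathcal{F}$ containing a digon embeds in it; and if $F \in \mathcal{F}$ is itself an oriented graph, then its underlying graph is not a forest (orientations of forests are excluded by hypothesis), so it contains a cycle of length at most $v(F) \le g_1$, which cannot appear in the high-girth underlying graph of $D$. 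Hence $D \in \text{Forb}_\text{ind}(\mathcal{F})$ with $\vec{\chi}(D) \ge k$, and heroicity again fails.

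The routine part throughout is the embedding bookkeeping; the only genuinely nontrivial inputs are the three existence statements I invoke: Erd\H{o}s's theorem in the undirected setting, its directed counterpart with large underlying girth, and the unboundedness of the dichromatic number over all tournaments. I expect the main obstacle to be the directed Erd\H{o}s-type statement, since (unlike the biorientation case) a large chromatic number of the underlying graph gives no lower bound on the dichromatic number of any particular orientation --- acyclic orientations of arbitrarily high chromatic graphs have dichromatic number $1$ --- so the high-girth, high-dichromatic oriented graphs must be produced by a dedicated construction rather than extracted from Erd\H{o}s's graphs.
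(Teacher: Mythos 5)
Your proposal is correct, but note that this paper never proves Proposition~\ref{prop:necess} itself: it is quoted from Aboulker, Charbit and Naserasr~\cite{aboulker}, and the only related argument in the text is the undirected discussion in the introduction (complete graphs are closed under induced subgraphs; Erd\H{o}s's high-girth, high-chromatic graphs rule out non-forests). Your argument is precisely the directed analogue of that standard reasoning, and it is the same route the cited source takes: the first two items follow from closure of complete digraphs and of tournaments under induced subdigraphs, together with $\vec{\chi}(\bivec{K}_k)=k$ and the unboundedness of the dichromatic number on tournaments; the third item follows from $\vec{\chi}(\bivec{G})=\chi(G)$ applied to Erd\H{o}s's graphs~\cite{erdos}; the fourth needs oriented graphs of arbitrarily large \emph{underlying} girth and arbitrarily large dichromatic number. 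You are right to single out that last input as the one genuinely nontrivial ingredient, and right that it cannot be extracted from Erd\H{o}s's theorem (an acyclic orientation of a high-chromatic graph has dichromatic number $1$, and even the converse implication, from chromatic number to some orientation of large dichromatic number, is the open Erd\H{o}s--Neumann-Lara problem); however, it is a known theorem, proved by a dedicated probabilistic construction, due to Harutyunyan and Mohar~\cite{harutyunyan1}, which already appears in this paper's bibliography. With that citation supplied, your proof is complete and matches the approach of the source.
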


Inspired by yet another important conjecture in graph theory, the \emph{Erd\H{o}s-Hajnal-Conjecture}, in~\cite{heros} Berger, Choromanski, Chudnovsky, Fox, Loebl, Scott, Seymour and Thomass\'{e} studied the dichromatic number of tournaments which exclude a \emph{single} fixed tournament $H$ as a(n induced) subdigraph. In this paper, the authors defined a \emph{hero} as a tournament $H$ such that the tournaments exluding isomorphic copies of $H$ have bounded dichromatic number. In other words, a digraph $H$ is a hero if the set $\{\bivec{K}_2,\overline{K}_2,H\}$ is heroic. The main result of Berger et al.~in~\cite{heros} is a recursive characterization of heroes.
It follows directly from this characterization that every transitive tournament and all tournaments on at most four vertices are heroes. 

It is a natural aim to characterize the finite heroic sets $\mathcal{F}$ of digraphs similar to what is claimed by the Gy\'{a}rf\'{a}s-Sumner-Conjecture for undirected graphs. In contrast to undirected graphs, only heroic sets of size at least $3$ are interesting to consider, as the necessary conditions from Proposition~\ref{prop:necess} directly imply that $\{\bivec{K}_2,\vec{K}_2\}$ is the only heroic set of size two (and is so trivially). Aboulker et al.~in~\cite{aboulker} proved that every heroic set of size three must be of one of the following types:

\begin{itemize}
\item $\{\vec{K}_2,\bivec{F},\bivec{K}_k\}$ for a forest $F$ and a number $k \in \mathbb{N}$,
\item $\{\bivec{K}_k,\overline{K}_\alpha,H\}$ for $k, \alpha \in \mathbb{N}$ and a hero $H$ such that $k=2$ or $H$ is transitive, or
\item $\{\bivec{K}_2, F, H\}$ for some oriented star forest\footnote{An oriented star forest is a disjoint union of orientations of stars.} $F$ and a hero $H$, or
\item $\{\bivec{K}_2,F,\vec{K}_k\}$ for some oriented forest $F$ and some $k \in \mathbb{N}$.
\end{itemize}
They then ventured to propose the conjecture that every one of the above triples is indeed heroic, thus claiming a complete description of the heroic sets of size $3$. 

Note that since $\vec{K}_2$-free digraphs amount exactly to the biorientations of undirected graphs, and since dichromatic number and chromatic number coincide on these, the conjecture of Aboulker et al.~corresponds exactly to the undirected Gy\'{a}rf\'{a}s-Sumner-Conjecture when restricting to the triples of the first type above. Triples of the second type as above were shown to be indeed heroic by Aboulker et al. (cf.~\cite{aboulker}, Theorem~4.1), their proof is based on the results from~\cite{harutyunyan2}. Finally, for the third and fourth types of triples we deal with oriented graphs. Let us restate the conjectures for these cases.
\begin{conjecture}\label{con:starforest}
For every orientation of a star forest $F$ and every hero $H$ the oriented graphs excluding $F$ and $H$ as induced subdigraphs have bounded dichromatic number.
\end{conjecture}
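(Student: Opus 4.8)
The plan is to attack the conjecture by a two-level induction: an outer reduction that peels off the components of the star forest $F$ one at a time, reducing to the case where $F$ is a single oriented star, and an inner induction on the recursive structure of the hero $H$ furnished by the characterization of Berger et al.

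First I would set up the \emph{star-forest reduction}. Write $F = S \cup F'$, where $S$ is one star and $F'$ is a star forest with fewer components, and let $D \in \text{Forb}_\text{ind}(\{F,H\})$. I would select a maximal family of pairwise vertex-disjoint and pairwise non-adjacent induced copies of $S$ in $D$, and let $X$ be their union. Since distinct copies are non-adjacent, $D[X]$ is a disjoint union of oriented stars and hence acyclic, so $\vec{\chi}(D[X]) = 1$; the aim is then to argue that $D - X$ lies in a class handled by the induction hypothesis, whence $\vec{\chi}(D) \le 1 + \vec{\chi}(D-X)$ is bounded. The delicate point, and the first real obstacle, is that an induced copy of $F'$ in $D-X$ may send arcs to copies of $S$ inside $X$, so maximality does not immediately rule it out; overcoming this requires a Ramsey-type cleaning step bounding how a potential copy of $F'$ can interact with $X$, and it is here that the star-forest case genuinely differs from the single-star case.

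The heart of the matter is therefore the \emph{single-star case}. Fixing $S$, excluding $S$ as an induced subdigraph translates, via Ramsey's theorem, into a strong local statement. In the representative subcase $S = S_p^+$, excluding $S_p^+$ forces every out-neighborhood $N^+(v)$ to have bounded independence number in the underlying undirected graph (and by arc reversal, which preserves $\vec{\chi}$ and maps heroes to heroes, one handles $S_q^-$ symmetrically); a general mixed star yields only a weaker \emph{joint} condition relating $N^+(v)$ and $N^-(v)$, whose Ramsey analysis is an additional technical layer. Combined with $H$-freeness, this says each $D[N^+(v)]$ is an $H$-free oriented graph of bounded independence number, and the plan is to prove a \emph{core lemma}: $H$-free oriented graphs in which every out-neighborhood has bounded independence number have bounded dichromatic number. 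Crucially one cannot simply pass to the chromatic number of the underlying graph, since $H$-freeness bounds the dichromatic number but \emph{not} the size of the tournaments inside $D$; the out-neighborhoods remain large and genuinely directed arguments are unavoidable, which is exactly why the statement lies beyond the reach of the undirected theory.

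For the core lemma I would induct on the hero decomposition of $H$: when $H$ is a single vertex the class is trivial, and the substantive cases are $H = H_1 \Rightarrow H_2$ and the cyclic triangular substitution of a transitive tournament into a smaller hero. In each case the idea is to use a well-chosen vertex ordering, in the spirit of the median-order argument driving the proof of the hero theorem, together with the bounded independence number of neighborhoods, to locate inside any would-be large-dichromatic subdigraph a dominated or dominating part that either realizes the forbidden $H$ (a contradiction) or splits $D$ into pieces excluding the smaller heroes $H_1,H_2$, to which induction applies. I expect the main obstacle to be precisely this recursion step: combining the sub-colorings obtained for the parts of a split while respecting that copies of $H$ must be \emph{induced} rather than merely present as subdigraphs, since the substitution operations generating heroes interact subtly with the induced constraint. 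Controlling this interaction uniformly over all heroes and all stars is the crux on which a full resolution of Conjecture~\ref{con:starforest} hinges.
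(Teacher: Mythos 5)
This statement is a conjecture, not a theorem of the paper: the paper proves only special cases of it (Theorems~\ref{thm:dirtriangle}, \ref{thm:transitive}, \ref{thm:W4minus} and \ref{thm:addsink}, all with $F=S_2^+$ and particular heroes $H$), and its conclusion explicitly leaves the general case open. Measured against that, your proposal does not close the gap; it repackages it. Your ``core lemma'' --- that $H$-free oriented graphs in which every out-neighborhood has bounded independence number have bounded dichromatic number --- is not an auxiliary statement at all: excluding an induced $S_p^+$ is \emph{by definition} the same as every out-neighborhood having independence number less than $p$, so the core lemma is literally Conjecture~\ref{con:starforest} for single out-stars, restated. Even its weakest instance (independence number $1$, i.e.\ the class $\text{Forb}_\text{ind}(\bivec{K}_2,S_2^+,H)$) is the open problem highlighted in Section~\ref{sec:inducedconclusion}, where it is stated that already for $H=\vec{K}_4^s$ ``a new method is required.'' Your plan for proving the core lemma --- induct on the hero decomposition $H=H_1\Rightarrow H_2$ using ``a well-chosen vertex ordering, in the spirit of the median-order argument'' --- is precisely where the known tournament arguments of Berger et al.\ break down: their recursion exploits that any two vertices are adjacent, and no one has made the $H_1\Rightarrow H_2$ step work in the presence of non-adjacent pairs. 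You yourself flag this recursion step as ``the crux,'' which is an accurate self-assessment: it is the entire problem, and no mechanism for it is given.

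Two secondary gaps are also worth naming. First, the outer reduction from star forests to single stars is not innocuous for general heroes: the analogous reduction the paper does carry out (for Conjecture~\ref{con:transitive}) bounds the dichromatic number of the neighborhood $Y$ of a copy of $F_1$ by induction on $k$, using that in a $\vec{K}_k$-free digraph every out- and in-neighborhood is $\vec{K}_{k-1}$-free; for a general hero $H$ there is no such decrement (the neighborhood of a vertex in an $H$-free digraph is merely $H$-free again), so your ``Ramsey-type cleaning step'' has no evident replacement for this mechanism. Second, mixed stars (with both out- and in-leaves) are not reducible to $S_p^+$ and $S_q^-$ by arc reversal, and you acknowledge but do not resolve that case. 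In summary: the proposal is a reasonable map of the difficulties, but its load-bearing step is an open problem strictly containing everything the paper actually proves, so it cannot be counted as a proof or compared to one.
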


\begin{conjecture}\label{con:transitive}
For every orientation of a forest $F$ and every $k \in \mathbb{N}$ the oriented graphs excluding $F$ and $\vec{K}_k$ as induced subdigraphs have bounded dichromatic number.
\end{conjecture}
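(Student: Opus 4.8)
The plan is to peel off the two forbidden subdigraphs separately: first turn the exclusion of $\vec{K}_k$ into a bound on the underlying graph, and then attack the exclusion of $F$ by an embedding argument modelled on the undirected Gy\'arf\'as--Sumner machinery. For the first step, observe that since $D$ is an oriented graph every clique of the underlying graph $U(D)$ induces a tournament in $D$, and every tournament on $2^{k-1}$ vertices contains a transitive subtournament of order $k$; hence forbidding an induced $\vec{K}_k$ forces $\omega(U(D)) \le 2^{k-1}-1$. The task therefore reduces to showing that oriented graphs with bounded underlying clique number and no induced copy of a fixed oriented forest $F$ have bounded dichromatic number. Two normalisations I would make first: since directed cycles live inside strongly connected components and colours may be reused across the acyclic condensation, $\vec{\chi}(D)=\max_C\vec{\chi}(D[C])$, so I may assume $D$ strongly connected; and a disjoint-union argument reduces forests to trees, so I may assume $F$ is a single oriented tree $T$.

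As a base case, and to fix the shape of the argument, consider $F=S_t^{+}$. An induced $S_t^{+}$ exists precisely when some out-neighbourhood $N^+(v)$ contains $t$ pairwise non-adjacent vertices, so forbidding it means $\alpha(D[N^+(v)])<t$ for every $v$. Together with $\omega(U(D))<2^{k-1}$ this bounds $|N^+(v)|$ by a Ramsey number, so $D$ has bounded maximum out-degree $d$. But then $D$ has at most $d\,v(D)$ arcs, hence $U(D)$ is $2d$-degenerate and $\vec{\chi}(D)\le\chi(U(D))\le 2d+1$; the case $F=S_t^{-}$ is symmetric. Thus forbidding $F$ forces a local sparsity that, in this case, immediately yields a colouring.

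For a general oriented tree $T$ I would proceed by contradiction and induction on $v(T)$. Picking a leaf $\ell$ with neighbour $p$ and writing $T'=T-\ell$, I would assume $\vec{\chi}(D)$ exceeds a suitable bound $f(T,k)$ and aim to build an induced copy of $T$, contradicting the hypothesis. The induction hypothesis produces induced copies of $T'$; the remaining work is to attach $\ell$, in the prescribed direction $p\to\ell$ or $\ell\to p$, to a vertex joined to the image of $p$ and non-adjacent to every other placed vertex. To control this I would grow $T$ by a directed breadth-first/levelling scheme in the spirit of the embedding arguments used for the known cases of Gy\'arf\'as--Sumner, such as the radius-two tree result of Kierstead~\cite{kierstead}, maintaining at each stage a large reservoir of high dichromatic number that is uniformly joined with a fixed arc direction to the already-placed part of $T$ and remote from the rest, so that each new leaf can be drawn from it.

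The hard part, and the reason I would expect only special $F$ to be within reach, is that this levelling has no clean directed self-similarity. The decisive obstacle is that high dichromatic number is not inherited by directed neighbourhoods: as the very class studied in this paper shows, $\vec{\chi}(D)\ge 2$ is compatible with every out-neighbourhood inducing a transitive tournament and hence having dichromatic number $1$, so the fundamental undirected move of passing to a neighbourhood of large chromatic number has no direct analogue here. On top of this, orientations must be tracked throughout: an induced copy of $U(T)$ in $U(D)$ is useless unless every edge receives its prescribed direction, so each reservoir must be refined by arc orientation at every step, and it is unclear that high dichromatic number survives this refinement. Overcoming these points seems to require a genuinely global substitute for ``neighbourhood of high $\vec{\chi}$''---for instance iterated out- and in-neighbourhoods, or a bound tying $\vec{\chi}$ to long induced directed paths---and I expect the bulk of the difficulty, together with the restriction to special forests, to concentrate exactly here.
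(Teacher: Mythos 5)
The statement you are attempting is Conjecture~\ref{con:transitive}, which the paper itself does \emph{not} prove: it is stated as an open, directed analogue of the Gy\'{a}rf\'{a}s--Sumner conjecture, and the paper only establishes special cases --- oriented star forests $F$ (via Chudnovsky, Scott and Seymour~\cite{chudnovsky2}), $F=\vec{P}_4$ with $k=3$ (Aboulker et al.~\cite{aboulker}), and $F=P^+(1,1,1)$ with $k=3$ (Theorem~\ref{thm:N}). So there is no complete argument in the paper to compare yours against, and your proposal, as you yourself concede, is not a proof either. Your preliminary reductions are sound: forbidding an induced $\vec{K}_k$ does bound the clique number of the underlying graph (any $2^{k-1}$ mutually adjacent vertices span a tournament, hence a transitive subtournament of order $k$, which is automatically induced in an oriented graph); the restriction to strongly connected digraphs and to a single oriented tree is legitimate --- the latter is exactly the Proposition proved in the paper's introduction; and your base case $F=S_t^+$ is correct (bounded independence number inside out-neighbourhoods plus bounded clique number gives bounded out-degree by Ramsey, hence degeneracy of the underlying graph, hence bounded $\chi$, which dominates $\vec{\chi}$).

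The genuine gap is the one you name yourself: for a general oriented tree there is no directed substitute for the Gy\'{a}rf\'{a}s--Sumner move of passing to a neighbourhood of large chromatic number, because large dichromatic number is not inherited by out- or in-neighbourhoods --- indeed, in the class $\text{Forb}_\text{ind}(\bivec{K}_2,S_2^+,W_3^+)$ studied in this paper, every out-neighbourhood induces a transitive tournament and so has dichromatic number $1$, while the class has dichromatic number $2$. Your BFS/levelling scheme is the right instinct for \emph{specific} trees: it is essentially how Theorem~\ref{thm:N} is proved, where the alternating levels $X_i$ turn out to be independent sets precisely because $P^+(1,1,1)$ and $\vec{K}_3$ are simultaneously excluded. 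But that argument exploits the exact shape of the forbidden path and does not feed an induction over arbitrary oriented trees; your proposed ``reservoir uniformly joined with a fixed arc direction'' is exactly the object whose existence nobody knows how to guarantee. In short: correct reductions, correct base case, accurate diagnosis of the obstruction, but the conjecture remains exactly as open after your proposal as before it.
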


Conjecture~\ref{con:starforest} and Conjecture~\ref{con:transitive} can be regarded as oriented variants of the Gy\'{a}rf\'{a}s-Sumner conjecture, and in the main results of this paper we solve several special cases of these conjectures. Let us remark that Conjecture~\ref{con:transitive} can be reduced to the case in which $F$ is an oriented tree via the following observation. 

\begin{proposition}
Let $F_1$ and $F_2$ be oriented forests and let $F$ be their disjoint union. If Conjecture~\ref{con:transitive} holds for $F_1$ and $F_2$, then it also holds for $F$. 
\end{proposition}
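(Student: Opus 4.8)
The plan is to prove the statement by induction on $k$, keeping $F_1,F_2$ fixed. Write $F=F_1\sqcup F_2$ and set $n_1:=v(F_1)$. Since Conjecture~\ref{con:transitive} is assumed for $F_1$ and for $F_2$, for every $\ell\in\mathbb{N}$ there are constants $c_1(\ell),c_2(\ell)$ bounding $\vec{\chi}$ on $(F_1,\vec{K}_\ell)$-free and $(F_2,\vec{K}_\ell)$-free oriented graphs, respectively. Let $g(k)$ denote the supremum of $\vec{\chi}(D)$ over all oriented graphs $D\in\text{Forb}_{\text{ind}}(\{F,\vec{K}_k\})$; the goal is to show $g(k)<\infty$ for every $k$, which is exactly Conjecture~\ref{con:transitive} for $F$.

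Two elementary ingredients drive the argument. First, an anticomplete-set observation: if $A$ is an induced copy of $F_1$ in $D$ and $B$ is the set of vertices outside $A$ having no arc to or from any vertex of $A$, then $D[B]$ contains no induced $F_2$, since such a copy together with $A$ would form an induced $F=F_1\sqcup F_2$. Hence $D[B]$ is $(F_2,\vec{K}_k)$-free and $\vec{\chi}(D[B])\le c_2(k)$. Second, a neighborhood-degradation observation: because $D$ is $\vec{K}_k$-free (and an oriented graph, so an out-neighbor of $a$ sends no arc back to $a$), for every vertex $a$ the subdigraphs $D[N^+(a)]$ and $D[N^-(a)]$ are $\vec{K}_{k-1}$-free, as a transitive tournament $\vec{K}_{k-1}$ among the out-neighbors (resp. in-neighbors) of $a$ would extend, by prepending (resp. appending) $a$, to an induced $\vec{K}_k$. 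Consequently these neighborhoods lie in $\text{Forb}_{\text{ind}}(\{F,\vec{K}_{k-1}\})$, so by the induction hypothesis every induced subdigraph of them has dichromatic number at most $g(k-1)$.

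With these in hand the recursion is straightforward. If $D$ contains no induced $F_1$, then $\vec{\chi}(D)\le c_1(k)$. Otherwise fix an induced copy $A$ of $F_1$ and partition $V(D)=V(A)\sqcup B\sqcup M$, where $B$ is as above and $M$ is the set of vertices adjacent to $A$. Every vertex of $M$ lies in $N^+(a)$ or $N^-(a)$ for some $a\in V(A)$, so $M$ splits into at most $2n_1$ parts, each an induced subdigraph of one such neighborhood and hence of dichromatic number at most $g(k-1)$. Since $\vec{\chi}$ is subadditive over a partition of the vertex set (color the parts with pairwise disjoint palettes, so that each monochromatic set, and thus each potential monochromatic directed cycle, lies inside a single part), and since $D[V(A)]\cong F_1$ is an oriented forest with $\vec{\chi}=1$, we obtain $\vec{\chi}(D)\le c_2(k)+1+2n_1\,g(k-1)$. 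Combining the two cases yields $g(k)\le c_1(k)+c_2(k)+1+2n_1\,g(k-1)$.

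The base case $k=1$ is trivial, since a $\vec{K}_1$-free oriented graph has no vertex, giving $g(1)=0$; the recursion then shows $g(k)<\infty$ for every $k$, as all the $c_i(k)$ are finite by hypothesis. The one genuinely non-obvious point, and the main obstacle one must identify, is the treatment of the \emph{middle} set $M$ of neighbors of the copy $A$: a priori $M$ can be large and carry essentially all of the dichromatic number, and deleting a single copy of $F_1$ on its own does not terminate. The neighborhood-degradation observation is precisely what tames it, trading the large but otherwise unstructured set $M$ for neighborhoods that forbid a \emph{smaller} transitive tournament, so that the induction on $k$ closes the loop.
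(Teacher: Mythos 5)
Your proof is correct and takes essentially the same route as the paper's: induction on $k$ with the identical three-part decomposition into an induced copy of $F_1$, its neighborhood (controlled by the induction hypothesis because the in-/out-neighborhoods of each vertex of the copy are $\vec{K}_{k-1}$-free), and the anticomplete remainder (which must be $F_2$-free). The only cosmetic differences are that you argue by a direct recursive bound rather than a minimal counterexample, and you use $\vec{\chi}(F_1)=1$ where the paper settles for the cruder bound $v(F_1)$.
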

\begin{proof}
Suppose that $\{\bivec{K}_2,F_i,\vec{K}_k\}$ is heroic for all $i \in \{1,2\}$ and $k \in \mathbb{N}$.

We prove by induction on $k \ge 2$ that digraphs in $\text{Forb}_\text{ind}(\bivec{K}_2, F, \vec{K}_k)$ have bounded dichromatic number. This is obvious for $k=2$, so let $k \ge 3$ and suppose there is $C \in \mathbb{N}$ such that every digraph in $\text{Forb}_\text{ind}(\bivec{K}_2, F, \vec{K}_{k-1})$ admits an acyclic $C$-coloring. Let $$C':=\max\{\vec{\chi}(\text{Forb}_\text{ind}(\bivec{K}_2,F_1,\vec{K}_k)), v(F_1)(1+2C)+\vec{\chi}(\text{Forb}_\text{ind}(\bivec{K}_2,F_2,\vec{K}_k))\}.$$ We claim that every digraph in $\text{Forb}_\text{ind}(\bivec{K}_2,F,\vec{K}_k)$ is acyclically $C'$-colorable. Suppose towards a contradiction that there exists $D \in \text{Forb}_\text{ind}(\bivec{K}_2,F,\vec{K}_k)$ such that $\vec{\chi}(D)>C'$. Since $C' \ge \vec{\chi}(\text{Forb}_\text{ind}(\bivec{K}_2,F_1,\vec{K}_k))$, we find that $D \notin \text{Forb}_\text{ind}(\bivec{K}_2, F_1, \vec{K}_k)$, and hence $D$ contains an induced copy of $F_1$. Let $X \subseteq V(D)$ be the vertex-set of this copy, let $Y$ be the set of vertices outside $X$ having at least one neighbor in $X$, and let us put $Z:=V(D)\setminus (X \cup Y)$. Then we have $\vec{\chi}(D[X]) \le |X|=v(F_1)$. By definition, $Y$ is contained in the union of the $2v(F_1)$ sets $N^+(x), x \in X$ and $N^-(x), x \in X$. It is now clear that since $D \in \text{Forb}_\text{ind}(\bivec{K}_2, F, \vec{K}_k)$, we must have $D[N^+(x)], D[N^-(x)] \in \text{Forb}_\text{ind}(\bivec{K}_2,F, \vec{K}_{k-1})$ for every $x \in X$, and it follows that $$\vec{\chi}(D[Y]) \le \sum_{x \in X}{(\vec{\chi}(D[N^+(x)])+\vec{\chi}(D[N^-(x)]))} \le 2v(F_1)C$$ by definition of $C$. Finally, since there are no arcs in $D$ connecting vertices of $X$ and $Z$, it follows that $F$ cannot contain an induced copy of $F_2$, as the vertex-set of this copy joined with $X$ would induce a copy of $F$ in $D$. Hence, we have $\vec{\chi}(D[Z]) \le \vec{\chi}(\text{Forb}_\text{ind}(\bivec{K}_2,F_2,\vec{K}_k))$. We conclude
$$\vec{\chi}(D) \le \vec{\chi}(D[X])+\vec{\chi}(D[Y])+\vec{\chi}(D[Z]) \le v(F_1)+2v(F_1)C+\vec{\chi}(\text{Forb}_\text{ind}(\bivec{K}_2,F_2,\vec{K}_k)) \le C',$$ contradicting our assumptions on $D$. This shows that these assumptions were wrong. It follows that $\{\bivec{K}_2, F, \vec{K}_k\}$ is heroic for all $k \ge 1$, as required.
\end{proof}

Aboulker et al.~\cite{aboulker} noted that in case that $H$ is a transitive tournament, Conjecture~\ref{con:starforest} is implied by a result of Chudnovsky, Scott and Seymour~\cite{chudnovsky2}. They further observed that Conjecture~\ref{con:starforest} is true in the case that $F$ has at most two vertices. Finally they focused on the case when $H=\vec{C}_3$ is the smallest non-trivial hero and $F$ has $3$ vertices. Then $F$ must be one of the following:
\begin{itemize}
\item $\overline{K}_3$, the forest consisting of three isolated vertices,
\item $\vec{P}_3$, the directed path on three vertices,
\item $\vec{K}_2+K_1$, the oriented star forest consisting of an arc plus an isolated vertex,
\item $S_2^+$, the $2$-out-star, or 
\item $S_2^-$, the $2$-in-star.
\end{itemize}
They proved that $\{\bivec{K}_2,F,\vec{C}_3\}$ is indeed heroic if $F$ is one of the first three star forests. Already in the case $F \in \{S_2^+,S_2^-\}$ however, the could not to prove heroicness and made the following explicit conjecture.

\begin{conjecture}[cf.~\cite{aboulker}, Conjecture 6.2]\label{con:S2+}
$$\vec{\chi}(\text{Forb}_{\text{ind}}(\bivec{K}_2,S_2^+,\vec{C}_3))=\vec{\chi}(\text{Forb}_{\text{ind}}(\bivec{K}_2,S_2^-,\vec{C}_3))=2.$$
\end{conjecture}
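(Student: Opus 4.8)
The plan is to reduce both equalities to a single upper bound via a symmetry, dispatch the (easy) lower bound explicitly, and then concentrate on the upper bound, which carries all the content.

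First I would record the reduction. Reversing all arcs of a digraph interchanges $S_2^+$ with $S_2^-$, fixes $\bivec{K}_2$ and $\vec{C}_3$, and preserves $\vec{\chi}$ (a vertex set induces a directed cycle in $D$ iff it does in the reverse). Hence $\vec{\chi}(\text{Forb}_{\text{ind}}(\bivec{K}_2,S_2^-,\vec{C}_3))=\vec{\chi}(\text{Forb}_{\text{ind}}(\bivec{K}_2,S_2^+,\vec{C}_3))$, and it suffices to treat the $S_2^+$ case. Next I would translate the forbidden list: excluding $\bivec{K}_2$ makes $D$ an oriented graph; excluding $S_2^+$ forces any two out-neighbors of a vertex to be adjacent, so $N^+(v)$ induces a tournament; excluding $\vec{C}_3$ then makes this tournament transitive. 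Thus $\text{Forb}_{\text{ind}}(\bivec{K}_2,S_2^+,\vec{C}_3)$ is exactly the class of $\vec{C}_3$-free oriented graphs in which $D[\{v\}\cup N^+(v)]$ is a transitive tournament with source $v$ for every vertex $v$, matching the hypothesis of the main result. For the lower bound $\ge 2$ I would exhibit $\vec{C}_4$: it lies in the class (each out-degree is $1$, so there is no induced $S_2^+$, and its only cycle has length $4$) yet is not acyclic, so $\vec{\chi}(\vec{C}_4)=2$.

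The heart is the upper bound: every member $D$ of the class satisfies $\vec{\chi}(D)\le 2$. I would take a vertex-minimal counterexample $D$, which is then vertex-critical for dichromatic number $3$; a standard argument gives $d^+(v),d^-(v)\ge 2$ for every vertex, since if some vertex had out- or in-degree at most $1$, a $2$-coloring of $D-v$ (which exists by minimality, as the class is hereditary) would extend to $v$ by placing it in a class where it is a sink or a source. The task then reduces to showing that no digraph in the class has minimum in- and out-degree at least $2$. Here I would exploit the local transitive orders: for each $v$ let $m(v)$ be the source of $D[N^+(v)]$ (well-defined as $d^+(v)\ge 2$), so $v\to m(v)$ and $m(v)$ dominates $N^+(v)\setminus\{m(v)\}$. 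The map $m$ has out-degree $1$, so its functional graph contains a directed cycle $c_0\to c_1\to\cdots\to c_{t-1}\to c_0$ with $c_{i+1}=m(c_i)$; $\vec{C}_3$-freeness and the absence of digons force $t\ge 4$. Using $d^+(c_i)\ge 2$, each $c_i$ has a further out-neighbor dominated by $c_{i+1}$, and comparing these extra out-neighbors with the cycle vertices inside the transitive tournaments $D[N^+(c_i)]$ and $D[N^+(c_{i+1})]$ should propagate adjacencies around the cycle that, together with $\vec{C}_3$-freeness, are contradictory.

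The main obstacle is exactly this last step: controlling how a directed cycle interacts with the local transitive orders, since the predicted chords and the extra out-neighbors need not lie on the cycle and a naive case split branches quickly. I expect the decisive argument to isolate a well-chosen cycle (the $m$-cycle above, or a shortest directed cycle) and to track, for each consecutive triple, whether the chord forced by transitivity points forward or backward, ruling out each configuration via a forbidden $\vec{C}_3$ or a violated transitive-tournament constraint. Should the criticality route prove unwieldy, my fallback plan is to construct the $2$-coloring directly: take a maximal induced acyclic subdigraph $A$, set the two color classes to be $A$ and $V(D)\setminus A$, and show the complement is acyclic — maximality of $A$ forces every vertex outside $A$ to lie on a cycle through $A$, and the transitivity of the closed out-neighborhoods should again contradict the existence of a directed cycle entirely outside $A$.
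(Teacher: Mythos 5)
Your preliminary steps are all correct, and your setup coincides with the paper's: the arc-reversal symmetry, the translation of the class into ``$\vec{C}_3$-free oriented graphs whose out-neighborhoods induce transitive tournaments'', the lower bound via $\vec{C}_4$, the degree bound $d^+(v),d^-(v)\ge 2$ in a vertex-minimal counterexample, and your map $m$ (which is exactly the auxiliary functional digraph $F(D)$ on which the paper's whole proof is built). But the heart of your argument rests on a false reduction: you assert that ``the task then reduces to showing that no digraph in the class has minimum in- and out-degree at least $2$.'' No such statement can be proved, because it is false. Concretely, for $n \ge 7$ let $D_n$ be the circulant digraph on $\ZZ_n$ with arcs $(i,i+1)$ and $(i,i+2)$ for all $i$. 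Then $D_n$ is an oriented graph, every out-neighborhood $\{i+1,i+2\}$ induces a transitive tournament (the arc $(i+1,i+2)$ is present), and there is no directed triangle, since a triangle would require $a+b+c \equiv 0 \pmod n$ with $a,b,c \in \{1,2\}$, impossible as $3 \le a+b+c \le 6 < n$. So $D_n \in \text{Forb}_\text{ind}(\bivec{K}_2,S_2^+,\vec{C}_3)$, every vertex has in- and out-degree exactly $2$, yet $\vec{\chi}(D_n)=2$ (any set of consecutive vertices other than all of $\ZZ_n$ induces an acyclic subdigraph). In this example your $m$-cycle is the Hamiltonian cycle $i \mapsto i+1$, and the configuration is perfectly consistent: no propagation of chords around the cycle can produce a contradiction, because there is nothing contradictory present. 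After the degree step you must therefore invoke the hypothesis $\vec{\chi}(D)\ge 3$ again in an essential way, and your outline never does; that is exactly where all the difficulty of the theorem lies.

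The paper's actual proof indicates how much more is needed. It proves the stronger statement that even $\text{Forb}_\text{ind}(\bivec{K}_2,S_2^+,W_3^+)$ has dichromatic number $2$, and to make a minimal-counterexample induction close it must (i) strengthen the statement to a precoloring version (there is an acyclic $2$-coloring in which a prescribed vertex and its entire out-neighborhood get the same color), and (ii) develop out-module machinery around the cycles of $F(D)$: vertex sets of cycles of $F(D)$ are out-modules, contracting an out-module keeps one in the class, every vertex $v$ with $N^-(v)\neq\emptyset$ admits an out-module $M \subseteq N^-(v)$ with $N^+(M)\subseteq N^+(v)\cup\{v\}$, and an arc-addition lemma preserving the class. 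Your fallback plan (take a maximal induced acyclic set $A$ and show that $V(D)\setminus A$ is acyclic) is likewise only a hope: you offer no argument that a directed cycle disjoint from such an $A$ is impossible, and nothing in the class definition obviously rules it out. As it stands, the proposal settles the easy parts but the upper bound --- the entire content of the statement --- is missing, and the route you commit to cannot be completed in the form stated.
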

Note that by symmetry of reversing all arcs, it suffices to prove Conjecture~\ref{con:S2+} for the out-star $S_2^+$. The digraphs in $\text{Forb}_{\text{ind}}(\bivec{K}_2,S_2^+,\vec{C}_3)$ are exactly the directed triangle-free oriented graphs such that the out-neighborhood of every vertex induces a tournament. As the first main result of this paper, we prove Conjecture~\ref{con:S2+}.

\begin{theorem}\label{thm:dirtriangle}
$$\vec{\chi}(\text{Forb}_{\text{ind}}(\bivec{K}_2,S_2^+,\vec{C}_3))=\vec{\chi}(\text{Forb}_{\text{ind}}(\bivec{K}_2,S_2^-,\vec{C}_3))=2.$$
\end{theorem}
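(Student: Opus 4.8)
The plan is to establish the upper bound $\vec\chi(\text{Forb}_{\text{ind}}(\bivec{K}_2, S_2^+, \vec{C}_3)) \le 2$; the matching lower bound is witnessed by $\vec{C}_4$, a digon-free, $\vec{C}_3$-free oriented graph whose every out-neighbourhood is a single vertex and hence trivially a tournament, so that $\vec{C}_4 \in \text{Forb}_{\text{ind}}(\bivec{K}_2, S_2^+, \vec{C}_3)$ while $\vec\chi(\vec{C}_4)=2$. The case of $S_2^-$ then follows by reversing all arcs, which exchanges $S_2^+$ and $S_2^-$, preserves both $\bivec{K}_2$ and $\vec{C}_3$, and preserves the dichromatic number. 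Throughout I would use the reformulation noted after the statement: a digraph lies in this class exactly when it is an oriented graph with no directed triangle in which $N^+(v)$ induces a tournament --- equivalently, a transitive tournament --- for every vertex $v$.

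First I would reduce to the strongly connected case. Since every directed cycle lives in a single strong component, $\vec\chi(D)=\max_S\vec\chi(D[S])$ over the strong components $S$, and each $D[S]$ is again an induced subdigraph and hence in the class; so it suffices to $2$-colour strongly connected $D$. Next I would peel off low-degree vertices: if some vertex $v$ has $\outdeg{v}\le 1$ or $\indeg{v}\le 1$ in the current digraph, I remove it, $2$-colour the remainder by induction, and re-insert $v$ with a colour distinct from its unique out-neighbour (respectively in-neighbour); every directed cycle through $v$ then uses that single arc and is non-monochromatic, while cycles avoiding $v$ are handled by induction. This reduces the problem to a ``core'' (still induced, hence in-class) in which $\outdeg{v},\indeg{v}\ge 2$ for every vertex.

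For the core I would exploit the geometry of shortest directed cycles. A shortest directed cycle $C=v_0v_1\cdots v_{m-1}$ has $m\ge 4$ and is \emph{chordless}: any chord $v_i\to v_j$ with $j\neq i+1$ closes a strictly shorter closed directed walk (or, when $j=i-1$, a digon or a copy of $\vec{C}_3$), which is impossible. Using that $N^+(x)$ is a tournament, any vertex $x$ outside $C$ has at most two out-neighbours on $C$, and if two then they are consecutive, since non-consecutive vertices of a chordless $C$ are non-adjacent; a shortness computation moreover pins each in-neighbour of $x$ on $C$ to lie one or two steps \emph{before} its out-neighbours. This rigid ``interval attachment'' is precisely the local picture of a round, cyclically layered digraph, and it is what I would use to build the colouring. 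Concretely, I would take a maximum induced acyclic subdigraph $S$ and aim to show its complement is acyclic: for each $c\notin S$, maximality yields an out-neighbour $a_c\in S$, an in-neighbour $b_c\in S$, and an $a_c\rightsquigarrow b_c$ path in $D[S]$, and I would feed a shortest directed cycle of $D[V\setminus S]$ together with these certificates into an exchange argument, rotating a cycle vertex into $S$ along the transitive-tournament structure to contradict the maximality of $S$.

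The main obstacle is exactly this last step: converting the purely \emph{local} transitivity of the out-neighbourhoods into a usable \emph{global} order on the core. The facts above control how cycles and external vertices attach only locally, and the real danger is a dense core in which many short cycles interleave; making the exchange argument go through --- equivalently, extracting a genuine cyclic layering that $2$-colours the core, and ruling out configurations in which every attempted rotation manufactures a new short cycle --- is where the work concentrates. I expect this to demand a careful global analysis of how the transitive out-neighbourhoods of consecutive cycle vertices overlap, and it is the part of the argument I would develop most carefully.
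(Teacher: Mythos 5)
Your preliminary reductions are all correct but they are also the trivial part of the theorem: the lower bound via $\vec{C}_4$, the arc-reversal symmetry exchanging $S_2^+$ and $S_2^-$, the restriction to strongly connected digraphs, and the peeling of vertices with in- or out-degree at most one (re-inserted with a colour different from the unique out- resp.\ in-neighbour) are all sound. The genuine gap is that the entire content of the theorem lies in the step you explicitly leave open. A maximum induced acyclic set $S$ together with an ``exchange/rotation'' argument is a hope, not a proof: your local facts (shortest directed cycles are chordless; an outside vertex sees at most two, necessarily consecutive, vertices of such a cycle) constrain how single cycles attach, but they give no mechanism ruling out that every attempted rotation of a cycle vertex into $S$ closes a new directed cycle inside $S$, which is precisely the failure mode you name yourself. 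Nothing in the proposal supplies the global structure needed to make the exchange terminate, so the proposal does not establish $\vec{\chi}(\text{Forb}_{\text{ind}}(\bivec{K}_2,S_2^+,\vec{C}_3))\le 2$.

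It is worth noting how differently the paper gets around this obstacle. It never works with maximum acyclic sets; instead it proves the stronger Theorem~\ref{thm:transitive}, for the larger class $\text{Forb}_{\text{ind}}(\bivec{K}_2,S_2^+,W_3^+)$ (out-neighbourhoods inducing transitive tournaments, with directed triangles permitted), and even strengthens the induction hypothesis further in Theorem~\ref{thm:transitiveprecoloring} so that the closed out-neighbourhood of one prescribed vertex can be forced monochromatic --- a strengthening that is essential to make the induction close. The conversion from local transitivity to global structure, which you correctly identified as the crux, is achieved there by the auxiliary functional digraph $F(D)$ (each vertex points to the source of its out-neighbourhood tournament): directed cycles of $F(D)$ are out-modules of $D$ (Claim~\ref{claim:cycle_out-module}), out-modules can be found inside in-neighbourhoods (Lemma~\ref{lem:out-moduleinin}), and the class is closed under contracting them and under adding certain arcs (Lemmas~\ref{lem:out-module_contraction} and~\ref{lem:arcaddition}); the induction then proceeds by contraction on a minimal counterexample, not by exchange. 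If you want to complete your approach, you would have to invent a substitute for this out-module machinery, and the fact that the statement was an open conjecture of Aboulker, Charbit and Naserasr suggests the missing step is not a routine refinement of your sketch.
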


In fact, we deduce Theorem~\ref{thm:dirtriangle} as an immediate Corollary of the following stronger result involving the hero $W_3^+$.

\begin{theorem}\label{thm:transitive}
$\vec{\chi}(\text{Forb}_{\text{ind}}(\bivec{K}_2,S_2^+,W_3^+))=2$.
\end{theorem}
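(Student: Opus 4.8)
The plan is to prove the equivalent statement that every oriented graph $D$ in which the out-neighborhood $N^+(v)$ of every vertex induces a transitive tournament admits a partition of $V(D)$ into two sets, each inducing an acyclic subdigraph; this is exactly $\vec{\chi}(D)\le 2$. The first step is to record why the forbidden-subgraph hypothesis unfolds into this local condition: being $\bivec{K}_2$-free makes $D$ an oriented graph, being $S_2^+$-free forces each $N^+(v)$ to induce a tournament, and being $W_3^+$-free then forbids a directed triangle inside any such tournament, since a directed triangle in $N^+(v)$ together with $v$ would induce a copy of $W_3^+$. Hence each $N^+(v)$ is a transitive tournament. The lower bound $\vec{\chi}\ge 2$ is immediate from $\vec{C}_3\in\text{Forb}_{\text{ind}}(\bivec{K}_2,S_2^+,W_3^+)$. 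Since every directed cycle lies in a single strong component and $\vec{\chi}(D)=\max_S\vec{\chi}(D[S])$ over the strong components $S$ of $D$, I would reduce to the case that $D$ is strongly connected; then $d^+(v),d^-(v)\ge 1$ for all $v$, and the top out-neighbor $f(v)$ dominating all of $N^+(v)$ is well defined.

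The main difficulty is that the obvious degeneracy bounds are too weak. One always has $\vec{\chi}(D)\le 1+d$, where $d$ is the out-degeneracy of $D$, by peeling off vertices of minimum out-degree and coloring greedily from the last removed vertex backwards. However, regular tournaments such as the quadratic-residue tournament on five vertices lie in the class and are both $2$-out-regular and $2$-in-regular, so this argument only yields the bound $3$; the analogous in-degeneracy bound fails for the same reason. Thus the proof must exploit the transitivity of the out-neighborhoods \emph{globally} rather than merely the degree sequence. My approach would therefore be extremal: let $A\subseteq V(D)$ be a maximum set inducing an acyclic subdigraph and aim to show that $D-A$ is acyclic as well, which certifies the partition $(A,V(D)\setminus A)$. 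Suppose not, and let $C=c_0\to c_1\to\cdots\to c_{t-1}\to c_0$ be a shortest directed cycle in $D-A$; being shortest, $C$ is induced. By maximality of $A$, adjoining any $c_i$ to $A$ creates a directed cycle, so each $c_i$ has an out-neighbor $a_i^+\in A$ and an in-neighbor $a_i^-\in A$ that are joined by a directed path lying entirely inside $A$.

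The crux is to combine these ``escape'' arcs into $A$ with the transitive structure of the out-neighborhoods to reach a contradiction. For each $i$, both $c_{i+1}$ and $a_i^+$ lie in $N^+(c_i)$, which is a transitive tournament, so they are comparable there; tracking, around all of $C$, whether $c_{i+1}\to a_i^+$ or $a_i^+\to c_{i+1}$, and splicing the $A$-internal paths from the vertices $a_i^+$ to the vertices $a_j^-$, should either produce a directed cycle inside $A$ (contradicting that $A$ is acyclic) or allow one to swap a cycle vertex $c_i$ for a suitable vertex of $A$ while preserving acyclicity (contradicting maximality of $A$). I expect this case analysis, driven by the transitive order on each $N^+(c_i)$ and by the minimality of $C$, to be the heart of the argument and its principal obstacle: it is precisely the step that uses the transitivity hypothesis in an essential way and that has no analogue for general oriented graphs, which is consistent with the fact that the bare degeneracy approach saturates at $3$. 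Once $D-A$ is shown to be acyclic, the partition $(A,V(D)\setminus A)$ witnesses $\vec{\chi}(D)\le 2$, and together with the lower bound this gives $\vec{\chi}(\text{Forb}_{\text{ind}}(\bivec{K}_2,S_2^+,W_3^+))=2$.
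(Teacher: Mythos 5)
Your setup is correct: the class is exactly the oriented graphs whose out-neighborhoods induce transitive tournaments, the lower bound follows from $\vec{C}_3$, the reduction to strongly connected digraphs is valid, and your observation that out-degeneracy only yields the bound $3$ (e.g.\ via the $2$-out-regular circulant tournament on five vertices) correctly identifies why a purely local argument cannot suffice. However, the proposal has a genuine gap: the entire core of the argument is missing. You take a maximum acyclic set $A$, assume $D-A$ contains a shortest directed cycle $C$, extract escape arcs $c_i\to a_i^+$ and $a_i^-\to c_i$ into $A$, and then state that splicing these with the transitive orders on the $N^+(c_i)$ ``should either produce a directed cycle inside $A$ \ldots{} or allow one to swap a cycle vertex $c_i$ for a suitable vertex of $A$.'' This is precisely the step you would need to prove, and it is not carried out; you yourself flag it as the ``principal obstacle.'' Moreover, the swap alternative is logically defective as stated: exchanging $c_i$ for a vertex of $A$ preserves $|A|$, so even if such a swap preserved acyclicity it would not contradict maximality of $A$ --- you would additionally need to show the modified set can be strictly enlarged, which is a further unproved claim.

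There is also concrete reason to doubt that this extremal scheme can be closed without major new ideas. The paper does not prove the theorem directly; it proves a strictly stronger statement (for every vertex $v$ there is an acyclic $2$-coloring in which $v$ and all of $N^+(v)$ receive the same color) by induction on a minimum counterexample, and this strengthening is essential to make the induction go through. The proof further requires a sequence of structural lemmas with no counterpart in your plan: the functional digraph $F(D)$ of arcs to the sources of out-neighborhoods, the fact that directed cycles of $F(D)$ form \emph{out-modules} (sets whose vertices have identical out-neighborhoods outside the set), closure of the class under contracting out-modules and under adding certain arcs, the existence of an out-module inside every nonempty in-neighborhood with controlled out-neighborhood, and a lemma forcing certain two-step reachable sets to induce transitive tournaments. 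The local transitivity hypothesis enters only through this module/contraction machinery, not through any direct analysis of how a cycle in $D-A$ attaches to a maximum acyclic set $A$; nothing in your outline indicates how the transitive orders on the $N^+(c_i)$ would control the internal $A$-paths joining the $a_i^+$ to the $a_j^-$, which can be arbitrarily long and need not interact with those orders at all. As it stands, the proposal is a plausible plan of attack, not a proof.
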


In order to prove Theorem~\ref{thm:transitive}, we need to establish several auxiliary results which deal with the structure of digraphs in the class $\text{Forb}_{\text{ind}}(\bivec{K}_2,S_2^+,W_3^+)$, which is surprisingly complicated (these are exactly the oriented graphs in which the out-neighborhood of every vertex induces a transitive tournament). 

As a next step we verify Conjecture~\ref{con:starforest} for more triples of the form $\{\bivec{K}_2,S_2^+, H\}$, where $H$ is some hero. We start with $W_3^-$.

\begin{theorem}\label{thm:W4minus}
$\vec{\chi}(\text{Forb}_{\text{ind}}(\bivec{K}_2,S_2^+,W_3^-)) \le 4$.
\end{theorem}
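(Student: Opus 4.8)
The plan is to reduce the claim to the two bounds already in hand, namely $\vec{\chi}(\text{Forb}_{\text{ind}}(\bivec{K}_2,S_2^+,\vec{C}_3)) = 2$ (Theorem~\ref{thm:dirtriangle}) and $\vec{\chi}(\text{Forb}_{\text{ind}}(\bivec{K}_2,S_2^+,W_3^+)) = 2$ (Theorem~\ref{thm:transitive}), by splitting the vertex set of a given $D \in \text{Forb}_{\text{ind}}(\bivec{K}_2,S_2^+,W_3^-)$ into two parts, each of which lands in one of these two classes. Since $\vec{\chi}(D) \le \vec{\chi}(D[V_1]) + \vec{\chi}(D[V_2])$ for any partition $V(D) = V_1 \cup V_2$ (superimpose acyclic colourings on disjoint palettes; a monochromatic cycle would lie entirely in one part), such a decomposition yields $\vec{\chi}(D) \le 2+2 = 4$ at once. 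First I would record the defining features of the class in usable form: as $D$ is an oriented graph excluding $S_2^+$, the out-neighbourhood $N^+(v)$ of every vertex induces a tournament, while excluding $W_3^-$ says precisely that no in-neighbourhood $N^-(v)$ contains a directed triangle — equivalently, that every directed triangle $\{a,b,c\}$ of $D$ satisfies $N^+(a)\cap N^+(b)\cap N^+(c)=\emptyset$, i.e. no vertex is dominated by all three vertices of a directed triangle. In particular each $D[N^-(v)]$ lies in $\text{Forb}_{\text{ind}}(\bivec{K}_2,S_2^+,\vec{C}_3)$, so $\vec{\chi}(D[N^-(v)])\le 2$ by Theorem~\ref{thm:dirtriangle}; this is a structural handle I would expect to use repeatedly.

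The decomposition I would aim for is $V(D) = V_1 \cup V_2$ with $D[V_1] \in \text{Forb}_{\text{ind}}(\bivec{K}_2,S_2^+,\vec{C}_3)$ and $D[V_2] \in \text{Forb}_{\text{ind}}(\bivec{K}_2,S_2^+,W_3^+)$. The class is closed under induced subdigraphs, so $\bivec{K}_2$-freeness and $S_2^+$-freeness are inherited for free and only two conditions must be enforced: $V_1$ induces no directed triangle, and for $V_2$ every out-neighbourhood taken inside $V_2$ is a transitive tournament. Read as a $2$-colouring $V(D)\to\{1,2\}$, this requires that every directed triangle of $D$ receives at least one colour-$2$ vertex, and that every induced copy of $W_3^+$ (an apex $v$ together with a directed triangle in $N^+(v)$) receives at least one colour-$1$ vertex.

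The natural first attempt is to let $V_2$ be exactly the set of vertices whose full out-neighbourhood is already a transitive tournament, and $V_1=T$ the set of vertices that dominate some directed triangle. Then $D[V_2]$ is automatically $W_3^+$-free, since inside $V_2$ each out-neighbourhood is a subset of a transitive tournament, and the only thing that can go wrong is a directed triangle all of whose vertices lie in $T$. The main obstacle is therefore the handling of these ``triangle-dominating'' triangles, and this is exactly where the intricate structure of $\text{Forb}_{\text{ind}}(\bivec{K}_2,S_2^+,W_3^-)$ has to enter. The decisive input should be the interplay of the two local conditions: out-neighbourhoods being tournaments forces strong adjacency among the vertices dominated by a common vertex, while the emptiness of $N^+(a)\cap N^+(b)\cap N^+(c)$ for every directed triangle restricts how such triangles can be stacked on top of each other.

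Concretely, I would try to refine the colouring above by moving a carefully chosen transversal of the problematic $T$-triangles from $V_1$ into $V_2$, repairing any copy of $W_3^+$ that this creates inside $D[V_2]$ by pushing its apex back into $V_1$, and arguing via the empty-common-out-neighbourhood property that this exchange process terminates without reintroducing a monochromatic directed triangle in $V_1$. Making this argument converge — equivalently, showing that the hypergraph of directed triangles and the hypergraph of copies of $W_3^+$ admit a simultaneous $2$-colouring of the required one-sided type — is the crux, and I expect it to absorb essentially all of the difficulty. Once such a partition is produced, the bound $\vec{\chi}(D)\le 4$ is immediate from Theorems~\ref{thm:dirtriangle} and~\ref{thm:transitive}.
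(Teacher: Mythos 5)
Your reduction framework is sound: if $V(D)=V_1\cup V_2$ with $D[V_1]\in \text{Forb}_{\text{ind}}(\bivec{K}_2,S_2^+,\vec{C}_3)$ and $D[V_2]\in \text{Forb}_{\text{ind}}(\bivec{K}_2,S_2^+,W_3^+)$, then indeed $\vec{\chi}(D)\le 2+2=4$ by Theorems~\ref{thm:dirtriangle} and~\ref{thm:transitive}, and your translation of the forbidden subdigraphs into the two conditions on the partition (every directed triangle meets $V_2$, every induced $W_3^+$ meets $V_1$) is accurate. But there is a genuine gap exactly where you place it: you never prove that such a partition exists. The ``exchange process'' --- move a transversal of the triangles inside $T$ into $V_2$, push the apex of any newly created $W_3^+$ back into $V_1$, repeat --- comes with no invariant, no potential function, and no argument that the $W_3^-$-freeness (your empty-common-out-neighbourhood property) prevents it from cycling or from terminating in an invalid state. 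Since you yourself flag this step as ``the crux'' absorbing ``essentially all of the difficulty,'' what you have is a reduction of the theorem to an unproven combinatorial statement --- a simultaneous one-sided $2$-colouring of the triangle hypergraph and the $W_3^+$ hypergraph --- not a proof. Moreover, this intermediate statement is strictly stronger than the theorem itself, and nothing in the paper certifies it; it could be false or at least as hard as what you are trying to prove.

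The paper avoids any such global partition. It proves, by minimal counterexample, a strengthened precoloured statement: for every arc $(u,v)$ there is an acyclic $4$-colouring with $c(u)=1$, $c(x)=1$ on $N_D^+(u)\setminus N_D^+(v)$, and $c(x)\in\{1,2\}$ on $N_D^+(v)$. The decomposition is local to the chosen arc: the set $B:=N_D^-(u)\cap N_D^+(v)$ is a transitive tournament (coloured $2$), the rest of $N_D^-(u)$ lies in $\text{Forb}_{\text{ind}}(\bivec{K}_2,S_2^+,\vec{C}_3)$ and is coloured with $\{3,4\}$ via Theorem~\ref{thm:dirtriangle}, and $D-(N_D^-(u)\cup\{u\})$ is coloured by induction; the precolouring conditions then force any monochromatic cycle to cross the boundary along an arc that produces an induced $W_3^-$, a contradiction. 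Note that in the paper's colouring the classes $3$ and $4$ contain vertices from both sides of its decomposition, so the paper's proof does not yield --- even implicitly --- a partition of your type. That is further evidence that your missing step is not a routine repair but a genuinely different and unresolved problem.
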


Our last result concerning Conjecture~\ref{con:starforest} generalizes Theorem~\ref{thm:W4minus} qualitatively and proves that for every $k \in \mathbb{N}$, the triple $\{\bivec{K}_2,S_2^+,H_k\}$ is heroic, where $H_k$ is the hero on $k$ vertices obtained from the disjoint union of $W_3^+$ and $\vec{K}_{k-4}$ by adding all possible arcs from $W_3^+$ towards $\vec{K}_{k-4}$. More generally, we show the following.
\begin{theorem}\label{thm:addsink}
Let $H$ be a hero and let $H^-$ be the hero obtained from $H$ by adding a dominating sink. 
If $\{\bivec{K}_2,S_2^+,H\}$ is heroic, then so is $\{\bivec{K}_2,S_2^+, H^-\}$.
\end{theorem}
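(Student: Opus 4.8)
The plan is to transfer the bound $C:=\vec{\chi}(\text{Forb}_{\text{ind}}(\bivec{K}_2,S_2^+,H))$ supplied by the hypothesis to the strictly larger class $\text{Forb}_{\text{ind}}(\bivec{K}_2,S_2^+,H^-)$, noting that $H$ is an induced subdigraph of $H^-$, so the latter class contains the former. First I would record that $H^-$ is again a hero: by the recursive characterization of heroes of Berger et al., adjoining a dominating sink to a hero yields a hero. In particular $\{\bivec{K}_2,\overline{K}_2,H^-\}$ is heroic, so there is a constant $C_0$ bounding the dichromatic number of every tournament with no induced copy of $H^-$. Now let $D\in\text{Forb}_{\text{ind}}(\bivec{K}_2,S_2^+,H^-)$. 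Since $D$ is an oriented graph (it is $\bivec{K}_2$-free) with no induced $S_2^+$, the out-neighborhood $D[N^+(v)]$ of every vertex $v$ is a tournament, and being $H^-$-free it satisfies $\vec{\chi}(D[N^+(v)])\le C_0$.

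The crucial observation is a complementary bound on in-neighborhoods. I claim that for every vertex $w$ the induced subdigraph $D[N^-(w)]$ contains no induced copy of $H$. Indeed, if $X\subseteq N^-(w)$ were the vertex set of such a copy, then every $x\in X$ would send an arc to $w$ while, $D$ being an oriented graph, $w$ would send no arc back to $X$; hence $w$ would play the role of a dominating sink for this copy and $D[X\cup\{w\}]$ would be an induced $H^-$, contradicting $D\in\text{Forb}_{\text{ind}}(H^-)$. Since excluding a fixed induced subdigraph is hereditary, $D[N^-(w)]$ also excludes $\bivec{K}_2$ and $S_2^+$, so $D[N^-(w)]\in\text{Forb}_{\text{ind}}(\bivec{K}_2,S_2^+,H)$ and therefore $\vec{\chi}(D[N^-(w)])\le C$ by the hypothesis. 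Thus every in-neighborhood of $D$ is acyclically $C$-colorable; this is the step that genuinely uses that we adjoined a \emph{sink} rather than an arbitrary vertex, and it is the heart of the reduction.

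It remains to upgrade these local bounds to a global bound on $\vec{\chi}(D)$. Since the dichromatic number of a digraph equals the maximum dichromatic number over its strongly connected components, I may assume $D$ is strongly connected and fix a root $r$, partitioning $V(D)$ into the directed-distance layers $L_i=\{v:\didist(r,v)=i\}$ and writing $\ell(v):=\didist(r,v)$; every arc then runs from $L_i$ to some $L_{\le i+1}$, and in particular every in-neighbor of a vertex $v$ lies in $L_{\ge \ell(v)-1}$. The plan is to marry the layering to the two neighborhood bounds: the tournament structure of the out-neighborhoods sharply restricts how a directed cycle can spread across the layers, while the bound $\vec{\chi}(D[N^-(w)])\le C$ lets one acyclically color the in-neighbors encountered on the return path of such a cycle. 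I expect this combination to be the main obstacle, since bounded dichromatic number of all neighborhoods does not in general bound the dichromatic number of a digraph (arbitrary tournaments, whose neighborhoods are again tournaments, are a warning example); the argument must therefore genuinely exploit that out-neighborhoods are tournaments, in the same spirit as — and presumably building on — the structural analysis behind Theorem~\ref{thm:transitive}. Concretely, I would aim to show that a bounded number of consecutive layers can be grouped and colored together using a product of an in-neighborhood coloring with a bounded ``cycle-breaking'' palette, yielding $\vec{\chi}(D)\le C'$ for a constant $C'=C'(C,C_0)$ and hence the heroicness of $\{\bivec{K}_2,S_2^+,H^-\}$.
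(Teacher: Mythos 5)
Your first half is sound and matches the paper's key local observation: in any $D\in\text{Forb}_\text{ind}(\bivec{K}_2,S_2^+,H^-)$, every in-neighborhood $D[N^-(w)]$ is induced $H$-free (else $w$, which receives an arc from every vertex of the copy and, $D$ being oriented, sends none back, would complete an induced $H^-$), hence $\vec{\chi}(D[N^-(w)])\le C$. But from that point on your proposal is a plan, not a proof, and the plan as sketched does not work. In a directed BFS layering, arcs can jump backward arbitrarily many layers, so directed cycles are not confined to boundedly many consecutive layers; worse, a layer $L_i$ is covered by the \emph{out}-neighborhoods of the vertices in $L_{i-1}$ (an unbounded union of tournaments), not by boundedly many \emph{in}-neighborhoods, so neither of your two local bounds gives control of $\vec{\chi}(D[L_i])$. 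You yourself flag this step as the main obstacle, and it is exactly the heart of the theorem: converting the local bounds into a global one is what remains unproved.

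The paper closes this gap with a different, absorption-style idea rather than a global coloring scheme. A minimal counterexample $D$ is strongly connected and contains an induced copy $Y$ of $H$; one extends $Y$ to a strongly connected set $S$ by adding a shortest dipath $P$ between suitable vertices of $Y$, and sets $Z:=S\cup N_D^-(S)$. Two facts finish the proof. First, $\vec{\chi}(D[Z])\le v(H)(C+1)+3C+2$, where the $3C+2$ comes from a separate shortest-path lemma (Lemma~\ref{lemma:shortpath}): along a shortest dipath, induced-$S_2^+$-freeness forbids arcs from $N^-(x_i)$ to $N^-(x_j)$ whenever $j-i\ge 3$ (such an arc would shortcut the path), so a mod-$3$ grouping colors $V(P)\cup N^-(V(P))$ with $3C+2$ colors. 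Second --- and this is the idea your sketch lacks --- no arc leaves $Z$: if some $z\in S$ had an arc to $w\notin Z$, then walking along directed paths inside the strongly connected $D[S]$ and using induced-$S_2^+$-freeness together with $w\notin N^-(S)$, the arc propagates so that $(s,w)\in A(D)$ for \emph{every} $s\in S\supseteq Y$; then $w$ is a dominating sink for $Y$ and $D[Y\cup\{w\}]$ is an induced $H^-$, a contradiction, and arcs from $N^-(S)$ out of $Z$ are excluded similarly. Strong connectivity then forces $Z=V(D)$, so $\vec{\chi}(D)\le v(H)(C+1)+3C+2$. To repair your write-up you would need to replace the layering scheme by this closure argument (or prove some analogous statement showing a bounded-dichromatic-number set absorbs all of $D$); as it stands, the global step is missing.
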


Our last new result in this paper concerns Conjecture~\ref{con:transitive}. As mentioned above, Conjecture~\ref{con:transitive} holds true whenever $F$ is an oriented star forest, and therefore particularly for forests on at most $3$ vertices. The first open cases therefore appear when $F$ is an orientation of the $P_4$. Aboulker et al.~considered the directed path $\vec{P}_4$ and showed in one of their main results that the set $\{\bivec{K}_2,\vec{P}_4,\vec{K}_3\}$ is heroic. There are three other oriented paths on four vertices. Two of them, which are called $P^+(2,1)$ and $P^-(2,1)$ in~\cite{aboulker}, consist of two oppositely oriented dipaths of length two and one, respectively. Chudnovsky, Scott and Seymour proved in~\cite{chudnovsky2} that for every $k \in \mathbb{N}$, digraphs in the set $\text{Forb}_{\text{ind}}(\bivec{K}_2,P,\vec{K}_k)$ have underlying graphs with bounded chromatic number (and thus bounded dichromatic number) for $P \in \{P^+(2,1),P^-(2,1)\}$. Hence, Conjecture~\ref{con:transitive} holds for these two orientations of $P_4$. The same result however is wrong for the remaining orientation of $P_4$, denoted $P^+(1,1,1)$ in~\cite{aboulker}, as it consists of $3$ alternatingly oriented arcs. Here we complement the result of Aboulker et al.~\cite{aboulker} concerning the directed path $\vec{P}_4$ and $k=3$ by showing that also the set $\{\bivec{K}_2,P^+(1,1,1),\vec{K}_3\}$ is heroic.

\begin{theorem}\label{thm:N}
$\vec{\chi}(\text{Forb}_\text{ind}(\bivec{K}_2,P^+(1,1,1),\vec{K}_3))=2$.
\end{theorem}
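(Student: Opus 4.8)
The plan is to prove the nontrivial inequality $\vec{\chi}(\text{Forb}_{\text{ind}}(\bivec{K}_2,P^+(1,1,1),\vec{K}_3)) \le 2$; the reverse inequality is immediate, since the directed triangle $\vec{C}_3$ lies in this class (it is an oriented graph with no transitive triangle and too few vertices to contain $P^+(1,1,1)$) and satisfies $\vec{\chi}(\vec{C}_3)=2$. Since every directed cycle lies within a single strong component of a digraph $D$, one has $\vec{\chi}(D)=\max_C \vec{\chi}(D[C])$ over the strong components $C$; as each $D[C]$ is an induced subdigraph and hence again in the class, I may assume throughout that $D$ is strongly connected.

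First I would record the two structural consequences of the forbidden list. Because $D$ is an oriented graph with no induced $\vec{K}_3$, any three pairwise adjacent vertices form a directed triangle; equivalently, for every vertex $v$ both $N^+(v)$ and $N^-(v)$ are \emph{independent} sets. Second, forbidding $P^+(1,1,1)$ yields a \emph{join property}: for every arc $(y,z)$, every out-neighbor $w\neq z$ of $y$ is adjacent to every in-neighbor $u\neq y$ of $z$. Indeed, by the first fact $w\not\sim z$ and $u\not\sim y$, while $u=w$ would create the transitive triangle $y\to u\to z$ with $y\to z$; hence if $u\not\sim w$ the set $\{u,z,y,w\}$ would induce $u\to z\leftarrow y\to w$, a copy of $P^+(1,1,1)$.

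For the upper bound I would argue by contradiction, taking $D$ to be a vertex-minimal member of the class with $\vec{\chi}(D)\ge 3$. Such a $D$ is $3$-dicritical: it is strongly connected, every proper induced subdigraph is acyclically $2$-colorable, and (as for all $k$-dicritical digraphs) $\delta^+(D),\delta^-(D)\ge 2$. The heart of the argument is to confront these coloring obstructions with the rigid local structure above. Concretely, I would fix a shortest directed cycle $C$ — which, being shortest, is chordless and hence an induced $\vec{C}_g$ — and for each arc of $C$ use $\delta^+,\delta^-\ge 2$ to produce auxiliary out- and in-neighbors; the join property then forces a completely prescribed adjacency pattern among these auxiliary vertices, from which I expect to extract either a transitive triangle, an induced $P^+(1,1,1)$, or an explicit extension of a $2$-coloring of $D-v$ to all of $D$, each contradicting the choice of $D$. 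A more global alternative is to take a maximal induced acyclic subdigraph $A$, observe that every vertex outside $A$ has both an in- and an out-neighbor in $A$, and use the join property to show that $V(D)\setminus A$ can contain no directed cycle, thereby exhibiting a partition into two acyclic sets directly.

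The main obstacle is that directed cycles in this class can be long and can interact in complicated ways. In particular the class contains strongly connected members with arbitrarily large minimum in- and out-degree (suitable blow-ups of directed cycles, where each vertex of $\vec{C}_m$ is replaced by an independent set and consecutive sets are completely joined), as well as aperiodic members simultaneously containing directed cycles of coprime lengths; all of these still satisfy $\vec{\chi}=2$. Consequently there is no leveling or "directed degeneracy" shortcut, and one cannot merely peel off vertices of small degree. The real work is to show that, despite this, the independence of all neighborhoods together with the join property pins down the global cycle structure tightly enough to force a $2$-coloring; I expect the bottleneck to be propagating the local join property along a long cycle without losing control, and ruling out the aperiodic, high-degree configurations as obstructions.
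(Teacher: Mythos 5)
Your lower bound and your two structural observations are correct: in this class every out- and in-neighbourhood is an independent set, and the join property (for an arc $(y,z)$, each out-neighbour $w \neq z$ of $y$ is adjacent to each in-neighbour $u \neq y$ of $z$) follows exactly as you argue. But the proposal stops precisely where the theorem begins. Everything after that is a declaration of intent: you write that you ``expect to extract either a transitive triangle, an induced $P^+(1,1,1)$, or an explicit extension of a $2$-coloring,'' and that ``the real work is to show'' that the local structure pins down the global cycle structure. Neither of your two candidate strategies is executed, and neither is routine. For the first, a minimal counterexample indeed has $\delta^+,\delta^-\ge 2$, and the join property applied around a shortest cycle gives a web of forced adjacencies, but you never show this web terminates in a contradiction --- and, as you yourself note, the class contains strongly connected members of arbitrarily large minimum in- and out-degree, so no local configuration around a short cycle is obviously forbidden. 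For the second, the key claim that for a maximal induced acyclic set $A$ the complement $V(D)\setminus A$ contains no directed cycle is asserted, not proved; it is far from obvious, and nothing in your write-up indicates how the join property would yield it. As it stands, this is a correct setup followed by a plan, not a proof.

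For comparison, the paper's proof supplies exactly the missing global mechanism, and it is not a minimal-counterexample argument at all. One inducts on $v(D)$: pick any vertex $x$ and grow alternating layers $X_0=\{x\}$, $X_1=N^+(X_0)$, $X_2=N^-(X_1)\setminus(X_0\cup X_1)$, $X_3=N^+(X_2)\setminus(X_0\cup X_1 \cup X_2)$, and so on, switching between out- and in-neighbourhoods at each step until the layers are exhausted. The two forbidden subgraphs force every layer to be independent --- this is where arguments of exactly your type (no transitive triangle, no induced antidirected path, applied twice in succession) do their work. One then colors the layers by parity, colors the rest of $D$ by induction, and checks there is no monochromatic cycle: because odd layers absorb all out-neighbours of even layers and even layers absorb all in-neighbours of odd layers, a monochromatic cycle meeting the layers cannot leave them; and inside the layers, such a cycle would have to connect a vertex of the lowest layer it meets to a vertex in a higher layer of the same parity, contradicting the fact that, by construction, that neighbour must lie in the very next (opposite-parity) layer. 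This alternating layering is what neutralizes the long-cycle, high-degree, and aperiodicity obstructions you correctly identify but do not overcome; it is the idea your proposal is missing.
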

We remark that the class $\text{Forb}_\text{ind}(\bivec{K}_2,P^+(1,1,1),\vec{K}_3)$ is quite rich, as it (among others) contains all oriented \emph{line digraphs}.

\paragraph{Structure of the paper.}
In Section~\ref{sec:outransitive} we investigate the structure of digraphs in the class $\text{Forb}_{\text{ind}}(\bivec{K}_2,S_2^+,W_3^+)$ and use these insights to prove Theorem~\ref{thm:transitive}. In Section~\ref{sec:W3-} we give the proof of Theorem~\ref{thm:W4minus}. In Section~\ref{sec:addsink} we prove Theorem~\ref{thm:addsink}. Finally, in Section~\ref{sec:4path} we prove Theorem~\ref{thm:N} and we conclude with final comments in Section~\ref{sec:inducedconclusion}.

\section{$\{S_2^+, W_3^+\}$-Free Oriented Graphs}\label{sec:outransitive}
In this section, we will prove Theorem~\ref{thm:transitive} and thereby show that $\{\bivec{K}_2,S_2^+,W_3^+\}$ is a heroic set. Note that $\text{Forb}_{\text{ind}}(\bivec{K}_2,S_2^+,W_3^+)$ is the class of oriented graphs $D$ with the property that the out-neighbourhood of every vertex in $D$ induces a transitive tournament. Given $D \in \text{Forb}_{\text{ind}}(\bivec{K}_2,S_2^+,W_3^+)$, we define $F = F(D)$ to be the spanning subdigraph of $D$ consisting of the arcs $(x,y) \in A(D)$ such that $y$ is the source in the transitive tournament induced by the out-neighbourhood of $x$ in $D$. Observe that for every $x \in V(D)$, if $d^+_D(x) \geq 1$ then $d^+_F(x) = 1$, and otherwise $d^+_F(x) = 0$. From the definition of $F(D)$ we immediately obtain  the following property:
	\begin{claim}\label{claim:F}
		Let $D \in \text{Forb}_{\text{ind}}(\bivec{K}_2,S_2^+,W_3^+)$ and $(x,y) \in A(F(D))$. Then we have $$N^+_D(x) \subseteq N^+_D(y) \cup \{y\}.$$
	\end{claim}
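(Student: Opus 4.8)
The plan is to unravel the definition of $F = F(D)$ and to exploit the distinguished role played by the source of a transitive tournament. By definition, the arc $(x,y) \in A(F(D))$ certifies precisely that $y$ is the source of the transitive tournament $T := D[N^+_D(x)]$ induced by the out-neighbourhood of $x$ in $D$. In particular $y \in N^+_D(x)$, so the inclusion of $y$ on the right-hand side is free, and the real content is to show that every \emph{other} out-neighbour of $x$ is also an out-neighbour of $y$.

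First I would recall the characterisation of the source in a transitive tournament: it is the unique vertex that dominates all remaining vertices of the tournament, i.e.\ the vertex of out-degree $v(T)-1$ within $T$. I would then fix an arbitrary $z \in N^+_D(x)$ with $z \neq y$ and argue as follows. Since $y,z \in N^+_D(x)$ and $T$ is a tournament (using that $D$ is $S_2^+$-free, so that $N^+_D(x)$ induces a tournament), the vertices $y$ and $z$ are adjacent in $T$. As $y$ is the source of $T$, the unique arc between them must be $(y,z)$ rather than $(z,y)$. Because $D$ is an oriented graph (i.e.\ $\bivec{K}_2$-free), there is no reverse arc $(z,y)$, and hence $(y,z) \in A(D)$, which places $z \in N^+_D(y)$.

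Collecting this over all choices of $z$ then yields $N^+_D(x) \setminus \{y\} \subseteq N^+_D(y)$, and adding back the vertex $y$ gives the claimed inclusion $N^+_D(x) \subseteq N^+_D(y) \cup \{y\}$. I do not anticipate any genuine obstacle in this argument, as the statement is essentially a reformulation of what it means for $y$ to be the source of the transitive tournament on $N^+_D(x)$; the only place where a hypothesis is actually invoked is the use of $\bivec{K}_2$-freeness to exclude the reverse arc $(z,y)$, together with $S_2^+$-freeness to guarantee that $N^+_D(x)$ induces a tournament in the first place.
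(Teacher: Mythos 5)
Your proof is correct and matches the paper's intent exactly: the paper states this claim as following immediately from the definition of $F(D)$, and your argument---that $y$, being the source of the transitive tournament $D[N^+_D(x)]$, dominates every other vertex of $N^+_D(x)$---is precisely that immediate unraveling. The only cosmetic remark is that invoking $\bivec{K}_2$-freeness to exclude a reverse arc is redundant, since the source of a tournament dominates all other vertices by definition.
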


	\noindent
	The next claim follows immediately from Claim \ref{claim:F} via induction.

	\begin{claim}\label{claim:F_dipath}
		Let $D \in \text{Forb}_{\text{ind}}(\bivec{K}_2,S_2^+,W_3^+)$ and let $(x_1,\dots,x_k)$ be a dipath in $F(D)$. Then $$N^+_D(x_1)\setminus \{x_2,\ldots,x_k\} \subseteq N^+_D(x_k).$$
	\end{claim}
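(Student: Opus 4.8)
The plan is to prove the statement by induction on the number $k$ of vertices of the dipath, using Claim~\ref{claim:F} to drive the inductive step. For the base case $k = 1$ the dipath is a single vertex $(x_1)$, the excluded set $\{x_2,\ldots,x_k\}$ is empty, and the inclusion $N^+_D(x_1) \subseteq N^+_D(x_1)$ holds trivially. (One could equally well anchor the induction at $k = 2$, where the assertion $N^+_D(x_1)\setminus\{x_2\} \subseteq N^+_D(x_2)$ is exactly Claim~\ref{claim:F} after deleting $x_2$ from both sides.)

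For the inductive step I would assume the statement for all $F(D)$-dipaths on fewer than $k$ vertices and let $(x_1,\ldots,x_k)$ be a dipath in $F(D)$. First I would apply the induction hypothesis to the initial subpath $(x_1,\ldots,x_{k-1})$, obtaining $N^+_D(x_1)\setminus\{x_2,\ldots,x_{k-1}\} \subseteq N^+_D(x_{k-1})$. Next, since $(x_{k-1},x_k)$ is an arc of $F(D)$, Claim~\ref{claim:F} yields $N^+_D(x_{k-1}) \subseteq N^+_D(x_k)\cup\{x_k\}$. Chaining these two inclusions shows that every out-neighbor of $x_1$ lying outside $\{x_2,\ldots,x_{k-1}\}$ is contained in $N^+_D(x_k)\cup\{x_k\}$.

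To conclude, I would take an arbitrary $v \in N^+_D(x_1)\setminus\{x_2,\ldots,x_k\}$; then in particular $v \notin \{x_2,\ldots,x_{k-1}\}$, so the chained inclusions place $v$ in $N^+_D(x_k)\cup\{x_k\}$, and since $v \neq x_k$ by the choice of $v$, we get $v \in N^+_D(x_k)$, as required. The only point demanding any care — and the closest thing to an obstacle in what is otherwise a routine induction — is this final bookkeeping step: Claim~\ref{claim:F} only guarantees membership in $N^+_D(x_k)\cup\{x_k\}$, so it is precisely the removal of the full set $\{x_2,\ldots,x_k\}$ (rather than merely $\{x_2,\ldots,x_{k-1}\}$) on the left-hand side of the target inclusion that ensures $v \neq x_k$ and thereby licenses dropping the extra vertex.
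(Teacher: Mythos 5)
Your proof is correct and takes exactly the approach the paper intends: the paper states only that the claim ``follows immediately from Claim~\ref{claim:F} via induction,'' and your argument is precisely that induction along the dipath, with the final bookkeeping (using $v \notin \{x_2,\ldots,x_k\}$ to discard the extra vertex $x_k$ coming from Claim~\ref{claim:F}) handled correctly.
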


	\noindent
	From Claim \ref{claim:F_dipath} we can derive that the vertex-sets of directed cycles in $F(D)$ form so-called \emph{out-modules} in $D$. 
	\begin{definition}
	Let $D$ be a digraph, and $\emptyset \neq M \subseteq V(D)$. We say that $M$ is an \emph{out-module} in $D$ if it holds that $(x,z) \in A(D) \Rightarrow (y,z) \in V(D)\setminus M$ for every $x, y \in M$ and $z \in V(D)\setminus M$. Equivalently, $N_D^+(x)\setminus M=N_D^+(y)\setminus M$ for all $x, y \in M$.
	\end{definition}

	\begin{claim}\label{claim:cycle_out-module}
		Let $D \in \text{Forb}_{\text{ind}}(\bivec{K}_2,S_2^+,W_3^+)$, and let $C$ be a directed cycle in $F(D)$. Then $V(C)$ is an out-module in $D$.
	\end{claim}
	\begin{proof}
	Let $x_1,x_2,\ldots,x_k,x_1$ be the vertex-trace of $C$.
		Let $y \in V(D) \setminus \{x_1,\dots,x_k\}$ and $1 \leq i \leq k$ be arbitrary such that $(x_i,y) \in A(D)$. Let $j \in [k] \setminus \{i\}$.  
		By Claim \ref{claim:F_dipath}, applied to the directed  subpath of $C$ starting in $x_i$ and ending in $x_j$, we know that $N^+_D(x_i)\setminus \{x_1,\ldots,x_k\} \subseteq N^+_D(x_j)\setminus \{x_1,\ldots,x_k\}$. Hence $(x_j,y) \in A(D)$. This shows that $\{x_1,\dots,x_k\}$ is indeed an out-module.  
	\end{proof}


	
	For a non-empty vertex-set $U$ in a digraph $D$, we denote by $D/U$ the digraph obtained by \emph{identifying} $U$, that is, the digraph with vertex set $(V(D) \setminus U) \cup \{x_U\}$ where $x_U \notin V(D)$ is some newly added vertex representing $U$, and the following arcs: the arcs of $D$ inside $V(D) \setminus U$, the arc $(x_U,v)$ for every $v \in N^+_D(U)$, and the arc $(v,x_U)$ for every $v \in N^-_D(U)$. 
	
In the following we prepare the proof of Theorem~\ref{thm:transitive} with a set of useful Lemmas.
We start with two lemmas yielding modifications of digraphs which preserve the containment in the class $\text{Forb}_\text{ind}(\bivec{K}_2,S_2^+,W_3^+)$.
	\begin{lemma}\label{lem:out-module_contraction}
		For every $D \in \text{Forb}_\text{ind}(\bivec{K}_2,S_2^+,W_3^+)$ and for every out-module $U \subseteq V(D)$ it holds that $D/U \in \text{Forb}_\text{ind}(\bivec{K}_2,S_2^+,W_3^+)$.
	\end{lemma}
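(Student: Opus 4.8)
The plan is to use the characterization of the class stated at the beginning of this section: a digraph lies in $\text{Forb}_\text{ind}(\bivec{K}_2,S_2^+,W_3^+)$ precisely when it is an oriented graph (contains no digon) and the out-neighbourhood of each of its vertices induces a transitive tournament. Accordingly, I would verify these two properties for $D/U$. Throughout I write $N^+:=N_D^+(U)$, noting that by the out-module property $N^+=N_D^+(u)\setminus U$ for every $u\in U$.

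First I would check that $D/U$ contains no digon. A digon inside $V(D)\setminus U$ would already be a digon of $D$, which is excluded. A digon between $x_U$ and some $v\in V(D)\setminus U$ would require both $v\in N_D^+(U)$ and $v\in N_D^-(U)$; the former gives $(u,v)\in A(D)$ for \emph{every} $u\in U$ (using the out-module property together with $v\notin U$), while the latter gives $(v,u')\in A(D)$ for some $u'\in U$, producing the digon $(u',v),(v,u')$ in $D$, a contradiction. This step is exactly where the out-module hypothesis is indispensable, and it is the only genuine obstacle in the argument.

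Next I would verify transitivity of out-neighbourhoods, distinguishing three cases for a vertex $w$ of $D/U$. If $w=x_U$, then $N^+_{D/U}(x_U)=N^+$, and since the arcs among $N^+$ are inherited unchanged from $D$, we have $(D/U)[N^+]=D[N^+]$, an induced subtournament of the transitive tournament $D[N_D^+(u)]$ for any $u\in U$, hence transitive. If $w\in V(D)\setminus U$ has no out-neighbour in $U$, then $N^+_{D/U}(w)=N_D^+(w)$ with unchanged arcs, so there is nothing to prove. The substantial case is $w\in V(D)\setminus U$ with $(w,u^*)\in A(D)$ for some (fixed) $u^*\in U$; here $N^+_{D/U}(w)=(N_D^+(w)\setminus U)\cup\{x_U\}$.

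For this last case I would show that $(D/U)\bigl[(N_D^+(w)\setminus U)\cup\{x_U\}\bigr]$ is isomorphic to $D\bigl[(N_D^+(w)\setminus U)\cup\{u^*\}\bigr]$ via the identity on $N_D^+(w)\setminus U$ together with $x_U\mapsto u^*$. The arcs among $N_D^+(w)\setminus U$ agree in both digraphs. For the arc between $x_U$ and a vertex $s\in N_D^+(w)\setminus U$, I would use that $s$ and $u^*$ are adjacent in the tournament $D[N_D^+(w)]$: if $(u^*,s)\in A(D)$ then $s\in N^+$, whence $(x_U,s)\in A(D/U)$ with no reverse arc (as $D/U$ is oriented); if $(s,u^*)\in A(D)$ then $s\in N_D^-(U)$, whence $(s,x_U)\in A(D/U)$, while $s\notin N^+$ rules out the reverse arc. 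In either case the arc between $x_U$ and $s$ points in the same direction as the arc between $u^*$ and $s$, establishing the isomorphism. Since $(N_D^+(w)\setminus U)\cup\{u^*\}\subseteq N_D^+(w)$ and $D[N_D^+(w)]$ is a transitive tournament, its image is an induced subtournament of a transitive tournament and is therefore transitive. This completes all cases, so $D/U\in\text{Forb}_\text{ind}(\bivec{K}_2,S_2^+,W_3^+)$.
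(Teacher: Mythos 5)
Your proof is correct, and it takes a genuinely different route from the paper's. The paper argues by contradiction directly on the three forbidden configurations: it assumes $D/U$ contains an induced $\bivec{K}_2$, $S_2^+$ or $W_3^+$ and, by case analysis on whether (and where) the contracted vertex $x_U$ appears in the copy, pulls each configuration back to a forbidden induced subdigraph or digon in $D$. You instead work with the structural characterization stated at the start of the section (oriented, and every out-neighbourhood induces a transitive tournament) and push that property forward through the contraction; your key device is the local isomorphism $(D/U)\bigl[N^+_{D/U}(w)\bigr]\cong D\bigl[(N_D^+(w)\setminus U)\cup\{u^*\}\bigr]$ obtained by substituting a representative $u^*\in U\cap N_D^+(w)$ for $x_U$, which is justified precisely by the out-module property and the digon-freeness you establish first (note that your remark ``$s\notin N^+$'' in the second subcase is itself just an instance of that digon-freeness). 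The payoff of your approach is uniformity: the cases $S_2^+$ and $W_3^+$ are handled simultaneously, with no enumeration of positions of $x_U$, and the same argument would show that contraction of an out-module preserves \emph{any} class of oriented graphs defined by requiring out-neighbourhoods to lie in a hereditary class of digraphs. The paper's approach is more mechanical but self-contained, in that it never needs the characterization, only the definitions of the three forbidden digraphs; both proofs invoke the out-module hypothesis at the same essential points.
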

	\begin{proof}
		We need to show that $D/U$ is induced $\{\bivec{K}_2,S_2^+,W_3^+\}$-free. We argue by contradiction.
		Suppose first that $D/U$ contains a $\bivec{K}_2$, namely a pair of vertices $x,y$ with $(x,y),(y,x) \in A(D/U)$. If $x,y \neq x_U$ then $x,y$ also span a copy of $\bivec{K}_2$ in $D$, a contradiction. Suppose then that $x = x_U$ or $y = x_U$; say $x = x_U$. By the definition of $D/U$, there are (not necessarily distinct) $u_1,u_2 \in U$ such that $(u_1,y),(y,u_2) \in A(D)$. Since $U$ is an out-module, $(u_2,y) \in A(D)$. Hence, $u_2,y$ span a copy of $\bivec{K}_2$ in $D$, a contradiction.
		
		Suppose next that $D/U$ contains an induced copy of $S_2^+$, namely, distinct vertices $x,y,z \in V(D/U)$ with $(x,y),(x,z) \in A(D/U)$ and with no arc in $D/U$ between $y$ and $z$. If $x,y,z \neq x_U$ then $x,y,z$ also span an induced $S_2^+$ in $D$, a contradiction. Suppose now that $y = x_U$, and let $u \in U$ be such that $(x,u) \in A(D)$. We have $(u,z),(z,u) \notin A(D)$ because $(x_U,z),(z,x_U) \notin A(D/U)$. Hence, $x,u,z$ span an induced $S_2^+$ in $D$, a contradiction. The case $z = x_U$ is analogous. Suppose now that $x = x_U$. Since $(x_U,y),(x_U,z) \in A(D/U)$ and $U$ is an out-module, we must have $(u,y),(u,z) \in A(D)$ for every $u \in U$, implying that $u,y,z$ span an induced $S_2^+$ in $D$ for every such $u$, again yielding the desired contradiction. 
		
		Suppose now that $D/U$ contains a copy of $W_3^+$ with vertices $x,y,z,w$ and arcs \linebreak $(x,y),(x,z),(x,w),(y,z),(z,w),(w,y)$. Again, if $x,y,z,w \neq x_U$ then $D$ also has a copy of $W_3^+$, a contradiction. Suppose now that $x = x_U$, and fix any $u \in U$. Since $U$ is an out-module, we have 
		$(u,y),(u,z),(u,w) \in A(D)$, implying that $u,y,z,w$ span a copy of $W_3^+$ in $D$, a contradiction. Suppose finally that one of $y,z,w$ equals $x_U$, say $y = x_U$ (without loss of generality). Since $(x,x_U),(x_U,z),(w,x_U) \in A(D/U)$, there are $u_1,u_2,u_3 \in U$ (not necessarily distinct) such that $(x,u_1),(u_2,z),(w,u_3) \in A(D)$. Since $U$ is an out-module, we have $(u_1,z),(u_3,z) \in A(D)$. Since $(x,w),(x,u_1) \in A(D)$ and $D$ is induced $S_2^+$-free, we must have either $(w,u_1) \in A(D)$ or $(u_1,w) \in A(D)$. If $(u_1,w) \in A(D)$ then also $(u_3,w) \in A(D)$ because $U$ is an out-module, but this is impossible as then $u_3,w$ would induce a digon in $D$. Finally, if $(w,u_1) \in A(D)$ then $x,u_1,z,w$ span a copy of $W_3$ in $D$, again yielding a contradiction. This concludes the proof of the lemma.
		\end{proof}
		
\begin{lemma}\label{lem:arcaddition}
Let $D \in \text{Forb}_\text{ind}(\bivec{K}_2,S_2^+,W_3^+)$, and let $(x,y) \in A(F(D))$. Let $z \in N_D^+(y)$ such that $(x,z), (z,x) \notin A(D)$. Then the digraph $D+(x,z)$ obtained from $D$ by adding the arc $(x,z)$ is contained in $\text{Forb}_\text{ind}(\bivec{K}_2,S_2^+,W_3^+)$.
\end{lemma}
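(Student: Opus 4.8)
The plan is to exploit the characterization stated at the beginning of this section: a digraph lies in $\text{Forb}_\text{ind}(\bivec{K}_2,S_2^+,W_3^+)$ precisely when it is an oriented graph in which the out-neighbourhood of every vertex induces a transitive tournament. Writing $D' := D+(x,z)$, I first observe that $D'$ has no digon, since the only arc added is $(x,z)$ and $(z,x) \notin A(D)$ by hypothesis. It then remains to verify that $N^+_{D'}(v)$ induces a transitive tournament for every $v \in V(D')$. Since the sole new arc is $(x,z)$, only two things can change when passing from $D$ to $D'$: the out-neighbourhood of $x$ gains the vertex $z$, and the induced digraph on any vertex set containing both $x$ and $z$ gains an arc. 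I will therefore treat the vertices $v \neq x$ and the vertex $x$ separately.

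For a vertex $v \neq x$, the out-neighbourhood is unchanged, $N^+_{D'}(v) = N^+_D(v)$, so I only need that its induced tournament did not acquire the new arc, i.e.\ that $x$ and $z$ are not simultaneously out-neighbours of $v$. This is exactly where $S_2^+$-freeness of $D$ enters: if we had $\{x,z\} \subseteq N^+_D(v)$, then since $(x,z),(z,x) \notin A(D)$ the vertices $v,x,z$ would induce a copy of $S_2^+$ in $D$, a contradiction. Hence $D'[N^+_{D'}(v)] = D[N^+_D(v)]$ is still a transitive tournament.

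The main case, and the only place requiring real work, is $v = x$, where $N^+_{D'}(x) = N^+_D(x) \cup \{z\}$; here Claim~\ref{claim:F} does the heavy lifting. By definition of $F(D)$, the vertex $y$ is the source of the transitive tournament $D[N^+_D(x)]$, so $y$ dominates every $w \in N^+_D(x)\setminus\{y\}$, and moreover $y$ dominates $z$ because $z \in N^+_D(y)$. By Claim~\ref{claim:F} we have $N^+_D(x)\setminus\{y\} \subseteq N^+_D(y)$, and together with $z \in N^+_D(y)$ this shows that $S := (N^+_D(x)\setminus\{y\}) \cup \{z\}$ is contained in $N^+_D(y)$. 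Since $D[N^+_D(y)]$ is a transitive tournament and $x \notin S$ (so the new arc has its tail outside $S$), the set $S$ induces a transitive tournament in $D'$ as well. Finally, $N^+_{D'}(x) = \{y\} \cup S$ with $y$ dominating all of $S$, and adjoining a dominating vertex to a transitive tournament again yields a transitive tournament, with $y$ as its source. Thus every out-neighbourhood of $D'$ induces a transitive tournament, giving $D' \in \text{Forb}_\text{ind}(\bivec{K}_2,S_2^+,W_3^+)$. The crux is therefore entirely contained in the $v=x$ analysis: the combination of the source property of $y$ with Claim~\ref{claim:F} is what guarantees that inserting $z$ cannot create a directed triangle in the out-neighbourhood of $x$.
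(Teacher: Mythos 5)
Your proof is correct, but it takes a genuinely different route from the paper's. The paper argues by contradiction and by forbidden subgraph: it supposes $D+(x,z)$ contains an induced $\bivec{K}_2$, $S_2^+$ or $W_3^+$, runs a case analysis on which role the new arc $(x,z)$ plays inside that copy, and in each case uses Claim~\ref{claim:F} to relocate a forbidden configuration into $D$ itself (typically an $S_2^+$ or $W_3^+$ centered at $y$, or a digon), contradicting $D \in \text{Forb}_\text{ind}(\bivec{K}_2,S_2^+,W_3^+)$. You instead verify, vertex by vertex, the local characterization stated at the start of Section~\ref{sec:outransitive} (oriented graphs in which every out-neighbourhood induces a transitive tournament): for $v \neq x$ both the out-neighbourhood and its induced digraph are unchanged, since the only danger, $\{x,z\} \subseteq N_D^+(v)$, is excluded by $S_2^+$-freeness because $x$ and $z$ are non-adjacent in $D$; and for $v = x$ you write $N^+_{D+(x,z)}(x) = \{y\} \cup S$ with $S = \bigl(N_D^+(x)\setminus\{y\}\bigr) \cup \{z\} \subseteq N_D^+(y)$ by Claim~\ref{claim:F} and the hypothesis $z \in N_D^+(y)$, so that $S$ induces a transitive tournament and $y$ dominates it. Both arguments rest on the same two ingredients, Claim~\ref{claim:F} and the source property of $y$, but your decomposition by vertex rather than by forbidden subgraph entirely eliminates the paper's longest portion, the $W_3^+$ case analysis, and is shorter and more conceptual; what the paper's version buys is that it never needs to invoke the equivalence between the forbidden-subgraph definition and the local transitive-tournament description (easy, but only asserted, not proved, in the paper). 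Your write-up omits only pedantic details: $x \neq z$ and $y \neq z$ follow from loop- and digon-freeness, which is what guarantees $x, y \notin S$ and that $\{y\} \cup S$ is a disjoint union; none of this affects correctness.
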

\begin{proof}
We need to show that $D+(x,z)$ is induced $\{\bivec{K}_2,S_2^+,W_3^+\}$-free. Again, we argue by contradiction. Clearly $D+(x,z)$ does not contain a $\bivec{K}_2$, since $(z,x) \notin A(D)$ by assumption. Suppose next that $D+(x,z)$ contains an induced copy of $S_2^+$, i.e. distinct vertices $a,b,c$ such that $(a,b),(a,c) \in A(D+(x,z))$, and $(b,c),(c,b) \notin A(D+(x,z))$. If $(x,z) \notin \{(a,b), (a,c)\}$, then $a,b,c$ induce a copy of $S_2^+$ also in $D$, a contradiction. We may therefore assume w.l.o.g. that $(x,z)=(a,b)$. Then we have $c \neq y$, since $(c,b) \notin A(D)$, but $(y,b)=(y,z) \in A(D)$ by assumption. Since $(x,c)=(a,c) \in A(D)$, we have $c \in N_D^+(x)$. But $(x,y) \in A(F(D))$, and hence Claim~\ref{claim:F} implies that $c \in N_D^+(x) \subseteq \{y\} \cup N_D^+(y)$. It follows that $(y,c) \in A(D)$. We further have $(y,b)=(y,z) \in A(D)$ and $(b,c), (c,b) \notin A(D)$. Hence, $y,b,c$ induce an $S_2^+$ in $D$, a contradiction.

Moving on, suppose that $D+(x,z)$ contains an induced copy of $W_3^+$, i.e., distinct vertices $a,b,c,d$ such that $(a,b),(a,c),(a,d),(b,c),(c,d),(d,b) \in A(D) \cup \{(x,z)\}$. 

Suppose first that $(x,z) \notin \{(a,b),(a,c),(a,d)\}$. Then $(a,b),(a,c),(a,d) \in A(D)$ and since $D$ does not contain an induced copy of $S_2^+$, the vertices $b,c,d$ are pairwise adjacent. Since $x$ and $z$ are non-adjacent in $D$, it follows that $\{x,z\} \not\subseteq\{b,c,d\}$. Hence we have $(b,c),(c,d),(d,b) \in A(D)$, yielding that $a,b,c,d$ induce a copy of $W_3^+$ in $D$, a contradiction.
Hence we may suppose that $(x,z) \in \{(a,b),(a,c),(a,d)\}$. By symmetry we may assume that $(x,z)=(a,b)$  w.l.o.g. Again using Claim~\ref{claim:F} we then have $c,d \in N_D^+(x) \subseteq N_D^+(y) \cup \{y\}$.

Let us first consider the case that $c,d \neq y$. Then $(y,c),(y,d) \in A(D)$, and since $(y,b)=(y,z) \in A(D)$ by assumption, it follows that the vertices $y,b,c,d$ induce a copy of $W_3^+$ in $D$, a contradiction. For the next case suppose that $y \in \{c,d\}$. The first option, namely that $y=c$, is impossible, since then we would have $(y,z) \in A(D)$ (by assumption) and $(z,y)=(b,c) \in A(D)$, a contradiction since $D$ is $\bivec{K}_2$-free. Therefore, we must have $y=d$. Then $c \in N_D^+(y)$ as well as $(c,y)=(c,d) \in A(D)$. It follows that $y,c$ induce a $\bivec{K}_2$ in $D$, so again, we conclude with a contradiction.

Having reached a contradiction in all cases, it follows that our initial assumption was wrong, indeed, $D+(x,z) \in \text{Forb}_\text{ind}(\bivec{K}_2,S_2^+,W_3^+)$. This concludes the proof. 
\end{proof}

The next lemma shows the existence of out-modules with special properties.
\begin{lemma}\label{lem:out-moduleinin}
Let $D \in \text{Forb}_\text{ind}(\bivec{K}_2,S_2^+,W_3^+)$, and let $v \in V(D)$. If $N_D^-(v) \neq \emptyset$, then there exists an out-module $M$ in $D$ such that $M \subseteq N_D^-(v)$ and $N_D^+(M) \subseteq N_D^+(v) \cup \{v\}$.
\end{lemma}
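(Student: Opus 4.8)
The plan is to prove the statement by induction on $v(D)$, using the contraction operation of Lemma~\ref{lem:out-module_contraction} as the main engine. Throughout write $F=F(D)$ and, for a vertex $x$ with $d^+(x)\ge 1$, let $F(x)$ denote its unique $F$-out-neighbour, i.e. the source of the transitive tournament $N_D^+(x)$. Note first that every $x\in N_D^-(v)$ satisfies $(x,v)\in A(D)$, so $d^+(x)\ge 1$ and $F(x)$ is defined. I would begin by disposing of an easy case: if some $x\in N_D^-(v)$ has $F(x)=v$, equivalently $v$ is the source of $N_D^+(x)$, then $(x,v)\in A(F(D))$ and Claim~\ref{claim:F} gives $N_D^+(x)\subseteq N_D^+(v)\cup\{v\}$. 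Here the singleton $M:=\{x\}$ is trivially an out-module, it lies in $N_D^-(v)$, and $N_D^+(M)=N_D^+(x)\subseteq N_D^+(v)\cup\{v\}$, so we are done.

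Hence assume $F(x)\neq v$ for all $x\in N_D^-(v)$. Since $F(x)$ is the source of $N_D^+(x)$ and $v\in N_D^+(x)$, the vertex $F(x)$ dominates $v$, so $(F(x),v)\in A(D)$ and thus $F(x)\in N_D^-(v)$. Therefore $F$ restricts to a self-map of the finite nonempty set $N_D^-(v)$, and iterating $F$ from any starting vertex produces a directed cycle $C$ in $F(D)$ with $U:=V(C)\subseteq N_D^-(v)$ and $|U|\ge 2$. By Claim~\ref{claim:cycle_out-module}, $U$ is an out-module of $D$, so by Lemma~\ref{lem:out-module_contraction} the contraction $\tilde D:=D/U$ again lies in $\text{Forb}_\text{ind}(\bivec{K}_2,S_2^+,W_3^+)$; moreover $v(\tilde D)<v(D)$, and its identified vertex $x_U$ is an in-neighbour of $v$ because $U\subseteq N_D^-(v)$. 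Applying the induction hypothesis to $\tilde D$ and $v$ yields an out-module $\tilde M\subseteq N_{\tilde D}^-(v)$ with $N_{\tilde D}^+(\tilde M)\subseteq N_{\tilde D}^+(v)\cup\{v\}$.

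It then remains to lift $\tilde M$ back to $D$: set $M:=\tilde M$ if $x_U\notin\tilde M$, and $M:=(\tilde M\setminus\{x_U\})\cup U$ if $x_U\in\tilde M$. In either case $M\subseteq N_D^-(v)$ is immediate. The inclusion $N_D^+(M)\subseteq N_D^+(v)\cup\{v\}$ and the out-module property should follow by translating the corresponding statements about $\tilde M$ through the contraction, using that $N_{\tilde D}^+(x_U)$ equals the common value $N_D^+(u)\setminus U$ for every $u\in U$; when $x_U\in\tilde M$ this transfer is clean, since deleting $\tilde M$ removes $x_U$ exactly when deleting $M$ removes $U$.

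The step I expect to be the main obstacle is verifying that $M$ is an out-module in the remaining case $x_U\notin\tilde M$: a priori two vertices $x,y\in\tilde M$ could have different out-neighbours inside $U$, which would break the out-module property after un-contracting. I would rule this out using $\bivec{K}_2$-freeness: if some $x\in\tilde M$ had an out-neighbour in $U$, then $x_U\in N_{\tilde D}^+(x)\subseteq N_{\tilde D}^+(\tilde M)\subseteq N_{\tilde D}^+(v)\cup\{v\}$, forcing $(v,x_U)\in A(\tilde D)$ and hence $(v,u)\in A(D)$ for some $u\in U$; but $u\in N_D^-(v)$ gives $(u,v)\in A(D)$, a digon, contradicting that $D$ is $\bivec{K}_2$-free. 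Thus no vertex of $\tilde M$ points into $U$, so $N_D^+(x)\cap U=\emptyset$ for every $x\in M=\tilde M$ and the out-module property transfers verbatim from $\tilde D$. Since the recursion strictly decreases $v(D)$ and the easy case handles the smallest instances, this completes the induction.
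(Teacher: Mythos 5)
Your proposal is correct and takes essentially the same route as the paper's proof: the singleton out-module case via Claim~\ref{claim:F}, otherwise a directed cycle of $F(D)$ contained in $N_D^-(v)$, contraction of that cycle using Claim~\ref{claim:cycle_out-module} and Lemma~\ref{lem:out-module_contraction}, an appeal to induction, and the same two-case lifting of the resulting out-module back to $D$. The only differences are cosmetic: you induct on $v(D)$ where the paper inducts on $d_D^-(v)$ (both drop under contraction), and your digon argument ruling out arcs from $\tilde M$ into $U$ is the same disjointness fact ($N_D^+(M) \cap N_D^-(v)=\emptyset$) that the paper extracts from its Claim~1 before proving its Claim~2.
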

\begin{proof}
We prove by induction on $n \ge 1$ the statement of the lemma for all digraphs $D \in \text{Forb}_\text{ind}(\bivec{K}_2,S_2^+,W_3^+)$ and vertices $v \in V(D)$ such that $d_D^-(v) \le n$.

If $n=1$, then $N_D^-(v)=\{w\}$ for a vertex $w \in V(D)$. Then $M:=\{w\}$ is an out-module of $D$. 
Hence, it suffices to verify that $N_D^+(w) \subseteq N_D^+(v) \cup \{v\}$. 
Suppose towards a contradiction that $(w,v') \in A(D)$ for $v' \in N_D^+(w)\setminus (N_D^+(v) \cup \{v\})$. Since $N_D^-(v)=\{w\}$, we have $v' \notin N_D^-(v)$, and hence $v,v'$ are non-adjacent in $D$, while $(w,v),(w,v') \in A(D)$. Hence, $w,v,v'$ induce an $S_2^+$ in $D$, a contradiction.

Now let $D \in \text{Forb}_\text{ind}(\bivec{K}_2,S_2^+,W_3^+)$ and $v \in V(D)$ such that $d_D^-(v)=n \ge 2$, and assume that the claim holds for all pairs of digraphs in $\text{Forb}_\text{ind}(\bivec{K}_2,S_2^+,W_3^+)$ and vertices whose in-degree is less than $n$.

First let us assume that there exists a vertex $w \in N_D^-(v)$ such that $(w,v) \in A(F(D))$. Then $M:=\{w\}$ is an out-module of $D$, and by Claim~\ref{claim:F} we have
$N_D^+(w) \subseteq N_D^+(v) \cup \{v\}$. This proves the assertion in this case.

Hence, for the rest of this proof we may suppose that $(w,v) \notin A(F(D))$ for every $w \in N_D^-(v)$. For any $w \in N_D^-(v)$ we clearly have $d_D^+(w) \ge 1$  and hence it follows that $d_{F(D)}^+(w)=1$. Furthermore, for every arc $(w,w') \in A(F(D))$ such that $w \in N_D^-(v)$ by Claim~\ref{claim:F} we must have $v \in N_D^+(w) \subseteq N_D^+(w') \cup \{w'\}$. Since $(w,v) \notin A(F(D))$, we have $v \neq w'$ and hence $(w',v) \in A(D)$. This shows that the out-neighbor in $F(D)$ of any vertex in $N_D^-(v)$ is again contained in $N_D^-(v)$. 
It follows that $F(D)$ restricted to $N_D^-(v)$ has minimum out-degree $1$ and therefore contains a directed cycle $C$ such that $V(C) \subseteq N_D^-(v)$. 

By Claim~\ref{claim:cycle_out-module}, $N:=V(C)$ is an out-module in $D$. Consider the digraph $D':=D/N$, which by Lemma~\ref{lem:out-module_contraction} is a member of $\text{Forb}_\text{ind}(\bivec{K}_2,S_2^+,W_3^+)$. Then by definition of $D/N$ and since $N \subseteq N_D^-(v)$, we have $v \in V(D')$ and $N_{D'}^-(v)=(N_D^-(v)\setminus N) \cup \{x_N\} \neq \emptyset$. Since $|N|=|V(C)| \ge 3$, this implies that $d_{D'}^-(v)=1+d_D^-(v)-|N| \le d_D^-(v)-2<n$. We may therefore apply the induction hypothesis to the digraph $D' \in \text{Forb}_\text{ind}(\bivec{K}_2,S_2^+,W_3^+)$ and the vertex $v \in V(D')$. We thus find an out-module $M'$ in $D'$ with the properties $M' \subseteq N_{D'}^-(v)=(N_D^-(v)\setminus N)\cup \{x_N\}$ and $N_{D'}^+(M') \subseteq N_{D'}^+(v) \cup \{v\}=N_D^+(v) \cup \{v\}$. 
Let us define the set $M \subseteq N_D^-(v)$ as $M:=M'$, if $x_N \notin M'$, and $M:=(M' \setminus \{x_N\}) \cup N$ if $x_N \in M'$. We claim that $M$ satisfies the assertions of the Lemma with respect to $D$ and $v$. In the following, we verify both parts of the inductive claim separately.
\paragraph{Claim 1.} $N_D^+(M) \subseteq N_D^+(v) \cup \{v\}$.
\begin{proof}
In the proof we will use the fact that
$$N_{D'}^+(M') \subseteq N_{D'}^+(v) \cup \{v\}=N_D^+(v) \cup \{v\},$$
which holds by the induction hypothesis.  

Let $x \in N_D^+(M)$ be given arbitrarily. Let $m \in M$ such that $(m,x) \in A(D)$. Our goal is to show that $x \in N_D^+(v) \cup \{v\}$.

Let us first consider the case that $x_N \notin M'$ and hence $M=M'$. By definition of $D'=D/N$ we either have $x \notin N$ and $(m,x) \in A(D')$, or $x \in N$ and $(m,x_N) \in A(D')$. Then since $m \in M=M'$ and $x, x_N \notin M=M'$, we obtain that either $x \notin N$ and 
$x \in N^+_{D'}(M')$,
or $x \in N$ and 
$x_N \in N^+_{D'}(M')$.
As 
$N_{D'}^+(M') \subseteq N_D^+(v) \cup \{v\}$, 
in the first case we have 
$x \in N_D^+(v) \cup \{v\}$,
as desired. 
The second case does not occur, since it yields $x_N \in N_D^+(v) \cup \{v\}$, which is impossible as $x_N$ is not a vertex of $D$. Hence, we have shown the claim that $x \in N_D^+(v) \cup \{v\}$.

For the second case, suppose that $x_N \in M'$ and hence $M=(M'\setminus \{x_N\}) \cup N$. Note that $x \notin N$, since $x \notin M \supseteq N$. Hence, the existence of the arc $(m,x) \in A(D)$ yields that either $m \in N$ and $(x_N,x) \in A(D')$, or $m \notin N$ and $(m,x) \in A(D')$. In both cases, this implies that $x \in N_{D'}^+(M') \subseteq N_D^+(v) \cup \{v\}$, 
proving the assertion.
\end{proof}

\paragraph{Claim 2.} $M$ is an out-module in $D$.
\begin{proof}
Let $x \neq y \in M$ and $z \in V(D) \setminus M$ arbitrary, and assume that $(x,z) \in A(D)$. We need to show that also $(y,z) \in A(D)$. Note that by Claim 1 we have $z \notin N_D^-(v)$, as otherwise $z \in N_D^+(M) \cap N_D^-(v)=\emptyset$. In particular, $z \notin N$.

Observe that $z \in N^+_{D'}(M')$. Indeed, if $x \notin N$ then $x \in M'$ and $(x,z) \in A(D')$, and if $x \in N$ then $x_N \in M'$ and $(x_N,z) \in A(D')$; in any case, $z \in N^+_{D'}(M')$. 

Since $y \in M$, we have either $y \in M'$ or $y \in N$ and $x_N \in M'$. Suppose first that $y \in M'$. Then $(y,z) \in A(D')$ because $z \in N^+_{D'}(M')$ and $M'$ is an out-module. Hence, in this case $(y,z) \in A(D)$, as required. Now suppose that $y \in N$ and $x_N \in M'$. Since $z \in N^+_{D'}(M')$ and $M'$ is an out-module, we have $(x_N,z) \in A(D')$. This means that there is $w \in N$ such that $(w,z) \in A(D)$. Now, as $N$ is itself is an out-module in $D$ and $y \in N, z \notin N$, we have $(y,z) \in A(D)$, as required. 
\end{proof}

By Claim 1 and 2 the out-module $M$ certifies that the pair $(D,v)$ satisfies the inductive claim. This concludes the proof of the Lemma by induction.
\end{proof}

\begin{lemma}\label{lem:twosteps}
Let $D \in \text{Forb}_\text{ind}(\bivec{K}_2,S_2^+,W_3^+)$, let $M \subseteq V(D)$ be an out-module in $D$ and let $v \in V(D)\setminus M$. Let $T$ be the set of vertices defined by
$$T:=\{t \in M|\exists u \in V(D)\setminus M: (v,u), (u,t) \in A(D)\}.$$
Then $D[T]$ is a (possibly empty) transitive tournament.
\end{lemma}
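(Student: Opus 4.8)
The plan is to prove the two defining properties of a transitive tournament separately: first that $D[T]$ is a tournament (any two vertices of $T$ are adjacent), and then that it contains no directed triangle. Throughout, write $W := N_D^+(v)\setminus M$ and recall that $W\subseteq N_D^+(v)$ induces a transitive tournament, so every subset of $W$ has a well-defined ``latest'' element (a sink in the induced sub-tournament, dominated by all the others). For $t\in T$, call any $u\in W$ with $(v,u),(u,t)\in A(D)$ a \emph{witness} for $t$; such a $u$ exists by the definition of $T$.

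The crucial step, from which both properties follow, is the following sub-claim: \emph{if $t_a,t_b\in T$ (not necessarily distinct) have witnesses $u_a,u_b$ with either $u_a=u_b$ or $(u_a,u_b)\in A(D)$, then $t_a,t_b\in N_D^+(u_b)$.} When $u_a=u_b$ this is immediate from the two witness arcs. Otherwise $(u_a,u_b),(u_a,t_a)\in A(D)$ place both $u_b$ and $t_a$ into $N_D^+(u_a)$, which induces a transitive tournament, so the distinct vertices $u_b$ (outside $M$) and $t_a$ (inside $M$) are adjacent. I would rule out $(t_a,u_b)\in A(D)$ as follows: since $t_a\in M$, $u_b\notin M$ and $M$ is an out-module, $u_b\in N_D^+(t_a)\setminus M=N_D^+(t_b)\setminus M$, giving $(t_b,u_b)\in A(D)$; but $(u_b,t_b)\in A(D)$ is the witness arc, so $u_b$ and $t_b$ would span a $\bivec{K}_2$, which is forbidden. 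Hence $(u_b,t_a)\in A(D)$, and together with the witness arc $(u_b,t_b)$ this places both $t_a,t_b$ in $N_D^+(u_b)$, as claimed. I expect this digon argument — the one place where the out-module property and $\bivec{K}_2$-freeness interact — to be the main obstacle, and everything else to be bookkeeping.

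Given the sub-claim, the tournament property is immediate: for distinct $t_a,t_b\in T$ with witnesses $u_a,u_b$, relabel so that $u_b$ is the latest of the two in $W$; then $t_a,t_b\in N_D^+(u_b)$, and since $N_D^+(u_b)$ is a transitive tournament the distinct vertices $t_a,t_b$ are adjacent. For transitivity, suppose for contradiction that $t_1,t_2,t_3\in T$ induce a directed triangle, with witnesses $u_1,u_2,u_3$, and let $u_k$ be a latest element of $\{u_1,u_2,u_3\}$ in $W$, so that for each $i$ either $u_i=u_k$ or $(u_i,u_k)\in A(D)$. Applying the sub-claim to each pair $(t_i,t_k)$ yields $t_i\in N_D^+(u_k)$ for all $i\in\{1,2,3\}$. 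Thus the directed triangle on $t_1,t_2,t_3$ lies entirely inside $N_D^+(u_k)$; but $N_D^+(u_k)$ induces a transitive tournament and hence contains no directed triangle (equivalently, $u_k$ together with $t_1,t_2,t_3$ would form a forbidden $W_3^+$). This contradiction shows $D[T]$ has no directed triangle, so, being a tournament, it is transitive; the empty case is trivially a transitive tournament.
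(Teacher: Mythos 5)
Your proof is correct and follows essentially the same route as the paper's: the same split into adjacency (tournament property) and directed-triangle-freeness, driven by the same key mechanism (the out-module property plus $\bivec{K}_2$-freeness forcing the arcs $(u_b,t_a)$, and $S_2^+$-/$W_3^+$-freeness of out-neighborhoods finishing each step). The only difference is organizational: your sub-claim combined with the ``latest witness'' in the transitive tournament $D[N_D^+(v)]$ packages uniformly what the paper does by an explicit case analysis on the number of distinct witnesses $|\{u_1,u_2,u_3\}|\in\{1,2,3\}$.
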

\begin{proof}
The assertion will follow directly from the following two claims.
\paragraph{Claim 1.} If $t_1 \neq t_2 \in T$, then $t_1$ and $t_2$ are adjacent in $D$.
\begin{proof}
By definition of $T$ there exist vertices $u_1, u_2 \in V(D)\setminus M$ (not necessarily distinct) such that $(v,u_i), (u_i,t_i) \in A(T)$ for $i=1,2$. If $u_1=u_2$, then $t_1,t_2$ must be adjacent, for otherwise the vertices $u_1, t_1, t_2$ would induce an $S_2^+$ in $D$, a contradiction. Suppose now that $u_1 \neq u_2$. Since $u_1, u_2 \in N^+(v)$, they must be adjacent, w.l.o.g. let $(u_1,u_2) \in A(D)$. Then $t_1 \in M$ and $u_2 \in V(D)\setminus M$ are distinct out-neighbors of $u_1$, and hence they must be adjacent in $D$. If $(t_1,u_2) \in A(D)$, then $M$ being an out-module implies that also $(t_2,u_2) \in A(D)$, yielding a $\bivec{K}_2$ in $D$ induced by $t_2$ and $u_2$, a contradiction. Therefore we have $(u_2,t_1) \in A(D)$. Then $t_1$ and $t_2$ are distinct out-neighbors of $u_2$ in $D$, which implies that they must be adjacent. This concludes the proof.
\end{proof}
\paragraph{Claim 2.} $D[T]$ contains no directed triangle.
\begin{proof}
Suppose towards a contradiction that there are three distinct $t_1, t_2, t_3 \in T$ inducing a directed triangle in $D$. Let $u_1, u_2, u_3 \in V(D)\setminus M$ be (not necessarily distinct) such that $(v,u_i), (u_i,t_i) \in A(D), i=1,2,3$. We distinguish three different cases depending on the size of the set $\{u_1,u_2,u_3\}$. 

For the first case, suppose that $u_1=u_2=u_3$. Then $t_1, t_2, t_3$ are three distinct out-neighbors of $u_1$ spanning a directed triangle. Hence, $u_1, t_1, t_2, t_3$ induce a $W_3^+$ in $D$, a contradiction to our assumption that $D \in \text{Forb}_\text{ind}(\bivec{K}_2,S_2^+,W_3^+)$.

For the second case, suppose that $\{u_1,u_2,u_3\}$ contains exactly two distinct vertices, w.l.o.g. $u_1 \neq u_2=u_3$. Since $u_1$ and $u_2$ are two distinct out-neighbors of $v$ in $D$, they must be adjacent. Suppose first that $(u_1,u_2) \in A(D)$. Then $t_1$ and $u_2$ are two distinct out-neighbors of $u_1$ in $D$, and hence they must be adjacent. If $(t_1,u_2) \in A(D)$, then by the module-property of $M$, also $(t_2,u_2) \in A(D)$, and hence $t_2, u_2$ induce a $\bivec{K}_2$ in $D$, a contradiction. If $(u_2,t_1) \in A(D)$, then $t_1, t_2, t_3 \in N_D^+(u_2)$ and hence $u_2,t_1,t_2,t_3$ induce a $W_3^+$ in $D$, a contradiction. 
Next suppose that $(u_2,u_1) \in A(D)$. Then $u_1, t_2, t_3$ are three distinct out-neighbors of $u_2$ in $D$, and hence $u_1$ must be adjacent to both $t_2$ and $t_3$. If $(t_2,u_1) \in A(D)$ or $(t_3,u_1) \in A(D)$, then $(t_1,u_1) \in A(D)$ since $M$ is an out-module, and hence $u_1, t_1$ induce a $\bivec{K}_2$ in $D$, a contradiction. Otherwise, we have $(u_1,t_2),(u_1,t_3) \in A(D)$ and hence $u_1, t_1, t_2, t_3$ induce a $W_3^+$ in $D$, again yielding the desired contradiction. 

For the third case, suppose that $u_1, u_2, u_3$ are pairwise distinct. Since $u_1, u_2, u_3$ are three distinct vertices in the transitive tournament $D[N^+(v)]$, they form a transitive triangle, and we may assume w.l.o.g. that $(u_1, u_2), (u_1, u_3), (u_2, u_3) \in A(D)$. Then $t_1$ and $u_3$ are distinct out-neighbors of $u_1$ in $D$, while $t_2$ and $u_3$ are distinct out-neighbors of $u_2$ in $D$. Hence, $u_3$ must be adjacent to both $t_1$ and $t_2$. If $(t_i,u_3) \in A(D)$ for some $i=1,2$, then we also have $(t_3,u_3) \in A(D)$ since $M$ is an out-module, and hence $u_3, t_3$ induce a $\bivec{K}_2$ in $D$, contradiction. Finally, if $(u_3,t_1),(u_3,t_2) \in A(D)$, then $u_3, t_1, t_2, t_3$ induce a $W_3^+$ in $D$, yielding again a contradiction to the containment of $D$ in $\text{Forb}_\text{ind}(\bivec{K}_2,S_2^+,W_3^+)$.

Since we arrived at a contradiction in each case, we conclude that the initial assumption concerning the existence of $t_1, t_2, t_3$ was wrong. This concludes the proof.
\end{proof}
\phantom\qedhere
\end{proof}

We are now sufficiently prepared to give the proof of Theorem~\ref{thm:transitive}. In fact, we will prove the following slightly stronger version of the result, which allows to enforce a monochromatic coloring on the closed out-neighborhood of an arbitrarily chosen vertex.

\begin{theorem}\label{thm:transitiveprecoloring}
Let $D \in \text{Forb}_\text{ind}(\bivec{K}_2,S_2^+,W_3^+)$, and $v \in V(D)$. Then there exists an acyclic coloring $c:V(D) \rightarrow \{1,2\}$ of $D$ such that $c(u)=c(v)$ for every $u \in N_D^+(v)$.
\end{theorem}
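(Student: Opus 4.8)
The plan is to prove Theorem~\ref{thm:transitiveprecoloring} by induction on $v(D)$, reformulating the conclusion as: there is an acyclic $2$-colouring in which the closed out-neighbourhood $\{v\}\cup N_D^+(v)$ is monochromatic. Since $D\in\text{Forb}_\text{ind}(\bivec{K}_2,S_2^+,W_3^+)$, the set $\{v\}\cup N_D^+(v)$ induces a transitive tournament and is in particular acyclic, so requiring it to be monochromatic is internally consistent. By symmetry I fix its colour to be $1$ (``red'') and call colour $2$ ``blue''. The base case $v(D)=1$ is trivial, and if $v$ is isolated I simply colour $D-v$ by induction and add $v$ with an arbitrary colour.

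I would first dispose of two clean reductions. \emph{Source reduction:} if $N_D^-(v)=\emptyset$ and $N_D^+(v)\neq\emptyset$, let $s$ be the source of the transitive tournament $D[N_D^+(v)]$; then $v\notin N_D^+(s)$ and $N_D^+(v)\subseteq\{s\}\cup N_D^+(s)=\{s\}\cup N_{D-v}^+(s)$, so applying induction to $D-v$ centred at $s$ makes $\{s\}\cup N_{D-v}^+(s)$ monochromatic, and I may set $c(v)$ to that colour; as $v$ is a source it lies on no directed cycle, so the colouring stays acyclic. \emph{Blue peeling:} if some $w\in N_D^-(v)$ satisfies $N_D^+(w)\subseteq\{v\}\cup N_D^+(v)$, I colour $D-w$ by induction centred at $v$ (note $N_{D-w}^+(v)=N_D^+(v)$ as $M\cap N^+_D(v)=\emptyset$ by $\bivec{K}_2$-freeness) and then set $c(w)=2$: every out-neighbour of $w$ is red, so $w$ has no blue out-neighbour and hence lies on no blue cycle, while being blue it lies on no red cycle. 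The point of isolating these reductions is that Lemma~\ref{lem:out-moduleinin} always hands us an out-module $M\subseteq N_D^-(v)$ with $N_D^+(M)\subseteq\{v\}\cup N_D^+(v)$; if $D[M]$ has a sink $m$ then $N_D^+(m)\subseteq N_D^+(M)\subseteq\{v\}\cup N_D^+(v)$ and blue peeling applies (in particular this covers the case $|M|=1$). So after these reductions I may assume $D[M]$ has minimum out-degree at least $1$, hence contains a directed cycle and $|M|\geq 2$.

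In the remaining case I would contract the module: by Lemma~\ref{lem:out-module_contraction}, $D/M\in\text{Forb}_\text{ind}(\bivec{K}_2,S_2^+,W_3^+)$, and it is smaller, so induction centred at $v$ yields an acyclic $2$-colouring $c'$ of $D/M$ with $\{v\}\cup N_{D/M}^+(v)=\{v\}\cup N_D^+(v)$ red. I then keep $c'$ on $V(D)\setminus M$ and colour $D[M]$ internally by an acyclic $2$-colouring obtained from the induction hypothesis applied to $D[M]$. The structural input $N_D^+(M)\subseteq\{v\}\cup N_D^+(v)$ immediately controls \emph{blue} cycles: any cycle leaving $M$ does so along an arc whose head lies in the red set $\{v\}\cup N_D^+(v)$, so no blue cycle can exit $M$; blue cycles are therefore confined to $D[M]$ and are killed by the internal colouring, while blue cycles disjoint from $M$ survive as blue cycles of $c'$ in $D/M$ and do not exist. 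Similarly every cycle of $D$ disjoint from $M$ projects to a cycle of $D/M$, so only cycles meeting $M$ remain to be analysed.

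The main obstacle is exactly the \emph{red} cycles through $M$. Such a cycle exits $M$ into the red set $\{v\}\cup N_D^+(v)$ but must re-enter $M$ through some red vertex $t\in M$ having a red in-neighbour outside $M$; this is precisely the situation not controlled by $c'$ when the contracted vertex $x_M$ turns out blue (and one checks that forcing $x_M$ red can be genuinely impossible, because of directed triangles $v\to y\to t\to v$ with $t\in M$). The resolution I would pursue is to choose the internal colouring of $M$ so that every vertex of $M$ that can be entered from outside along a red arc is coloured blue; here Lemma~\ref{lem:twosteps} is the essential tool, since it guarantees that the set of vertices of $M$ reachable from $v$ in two steps (the candidate red entry points that create the triangle obstruction) induces a \emph{transitive tournament} and can therefore be forced into one colour class without creating a monochromatic cycle inside $M$. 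Making this choice compatible with an acyclic $2$-colouring of $D[M]$, and verifying that it blocks every red re-entry, is the delicate heart of the argument; Lemma~\ref{lem:arcaddition} may additionally be used to normalise the arcs between $M$ and $\{v\}\cup N_D^+(v)$ beforehand. Once red re-entry into $M$ is excluded, every red cycle is confined either to $D[M]$ (killed internally) or to $V(D)\setminus M$ (killed by $c'$), completing the induction.
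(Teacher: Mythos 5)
Your reductions (the source case, and the ``blue peeling'' of any $w\in N_D^-(v)$ with $N_D^+(w)\subseteq\{v\}\cup N_D^+(v)$, which subsumes the case where $D[M]$ has a sink) are correct, and you have assembled the right toolkit: Lemma~\ref{lem:out-moduleinin} for the out-module $M$, Lemma~\ref{lem:out-module_contraction} for contraction, Lemma~\ref{lem:twosteps} for the transitivity of $T$. You have also correctly located the obstruction: monochromatic red cycles that leave $M$ (necessarily into the red set $\{v\}\cup N_D^+(v)$) and later re-enter $M$. But the resolution you sketch does not close this gap. You propose to colour blue ``every vertex of $M$ that can be entered from outside along a red arc'' and you identify this set with $T$, the vertices of $M$ reachable from $v$ in two steps. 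These two sets are not the same. A red cycle exits $M$ into $\{v\}\cup N_D^+(v)$, but it may then wander through red vertices far away from $v$ and re-enter $M$ along an arc $(w,t)$ where $w$ is red but $w\notin N_D^+(v)\cup\{v\}$: the out-module machinery controls only $N_D^+(M)$, while arcs \emph{into} $M$ can come from anywhere. Such a vertex $t$ need not lie in $T$, and Lemma~\ref{lem:twosteps} says nothing about it. Worse, the true set of red entry points depends on the colouring $c'$ and has no a priori structure; after your reductions $D[M]$ contains a directed cycle, and nothing prevents all vertices of that cycle from being red entry points, in which case forcing them all blue creates a blue cycle inside $M$. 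The sentence in which you call this verification ``the delicate heart of the argument'' is precisely where the proof is missing, and with the induction centred at $v$ in $D/M$ (leaving the colour of the contracted vertex uncontrolled) it cannot be completed in this form.

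It is instructive to compare with how the paper escapes: it does \emph{not} try to catch the re-entry points. Instead it deletes $T$, contracts $M\setminus T$ to a vertex $x_{M\setminus T}$, and uses Lemma~\ref{lem:arcaddition} to add all arcs $(x_{M\setminus T},u)$, $u\in N_D^+(v)$; this surgery is legal exactly because $T$ was removed first (no $u\in N_D^+(v)$ has an arc into $M\setminus T$, by the definition of $T$, and $(x_{M\setminus T},v)\in A(F(D_0))$). In the resulting digraph $D^\ast$ one has $N_{D^\ast}^+(x_{M\setminus T})=\{v\}\cup N_D^+(v)$, so applying the induction hypothesis to $D^\ast$ \emph{centred at the contracted vertex} $x_{M\setminus T}$ forces $x_{M\setminus T}$, $v$ and all of $N_D^+(v)$ to receive colour $1$ simultaneously in an acyclic colouring of $D^\ast$. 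Now any red cycle of $D$ meeting $M$ must avoid $T$ (which is coloured $2$ inside $M$, exactly as you propose, via Lemma~\ref{lem:twosteps}), so its portion outside $M$ closes up through $x_{M\setminus T}$ into a monochromatic red cycle of $D_0\subseteq D^\ast$ --- a contradiction, no matter at which vertex of $M$ and from which red vertex the cycle re-enters. So the role of $T$ is not to block re-entries; it is to make the contraction-plus-arc-addition surgery possible, after which the colour forced on the contracted vertex kills all re-entries wholesale. That re-centring of the induction at $x_{M\setminus T}$ is the idea your proposal is missing.
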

\begin{proof}
Suppose towards a contradiction that the claim is wrong, and let $D$ be a counterexample to the claim minimizing $v(D)$. Let $v \in V(D)$ be a vertex such that $D$ does not admit an acyclic $2$-coloring $c$ with the property that $c(u)=c(v)$ for every $u \in N_D^+(v)$. 
\paragraph{Claim 1.} $N_D^-(v) \neq \emptyset$.
\begin{proof}
Suppose towards a contradiction that $N_D^-(v)=\emptyset$. If also $N_D^+(v)=\emptyset$, then $v$ is an isolated vertex of $D$. Then any acyclic $2$-coloring of $D-v$ could be extended to an acyclic $2$-coloring of $D$ by coloring $v$ with color $1$, and the statement that $v$ has the same color as its out-neighbors would hold vacuously. Since this is impossible, we must have $\vec{\chi}(D-v) \ge 3$, which however contradicts the minimality of $D$ as a counterexample. This shows that $N_D^+(v) \neq \emptyset$. Let $u \in N_D^+(v)$ be the unique out-neighbor of $v$ in $F(D)$. Then $N_D^+(v) \subseteq N_D^+(u) \cup \{u\}$ by Claim \ref{claim:F}. The minimality of $D$ as a counterexample now implies that the digraph $D-v \in \text{Forb}_\text{ind}(\bivec{K}_2,S_2^+,W_3^+)$ admits an acyclic $2$-coloring $c^-:V(D) \rightarrow \{1,2\}$ satisfying $c^-(x)=c^-(u)$ for every $x \in N_{D-v}^+(u)=N_D^+(u)$. Let $c:V(D) \rightarrow \{1,2\}$ be defined as $c(x):=c^-(x)$ for every $x \in V(D)\setminus \{v\}$ and $c(v):=c^-(u)$. Since $c$ restricted to $V(D)\setminus \{v\}$ is an acyclic coloring, and no directed cycle in $D$ contains $v$ (recall $N_D^-(v)=\emptyset$), it follows that $c$ is an acyclic coloring of $D$. Moreover, for every $x \in N_D^+(v) \subseteq N_D^+(u) \cup \{u\}$ we have $c(x) = c^-(x) = c^-(u)=c(v)$. This is a contradiction to our initial assumption that $D$ does not admit an acyclic $2$-coloring with this property. This shows that our assumption $N_D^-(v)=\emptyset$ was wrong, concluding the proof.
\end{proof}
By Lemma~\ref{lem:out-moduleinin} applied to the vertex $v$ of $D$, there exists an out-module $M$ in $D$ such that $M \subseteq N_D^-(v)$ and $N_D^+(M) \subseteq N_D^+(v) \cup \{v\}$. Let $T \subseteq M$ be the set of vertices $t \in M$ for which there exists $u \in N_D^+(v)$ such that $(u,t) \in A(D)$. Since $N_D^+(v) \cap M=\emptyset$, the definition of $T$ here coincides with the one in Lemma~\ref{lem:twosteps}. Now, Lemma~\ref{lem:twosteps} implies that $D[T]$ is a (possibly empty) transitive tournament. 
\paragraph{Claim 2.} The digraph $D[M]$ admits an acyclic $2$-coloring $c_M:M \rightarrow \{1,2\}$ satisfying $c_M(t)=2$ for all $t \in T$.
\begin{proof}
Since $v(D[M]) \le v(D-v) <v(D)$, the minimality of the counterexample $D$ implies that $D[M] \in \text{Forb}_\text{ind}(\bivec{K}_2,S_2^+,W_3^+)$ satisfies the assertion of Theorem~\ref{thm:transitiveprecoloring}. If $T=\emptyset$, Claim~2 is satisfied by an arbitrary choice of an acyclic $2$-coloring for $D[M]$. If $T \neq \emptyset$, let $t_0 \in T$ be the source of the transitive tournament $D[T]$. Applying the assertion of the theorem to $D[M]$ and the vertex $t_0$, we find that there exists an acyclic $2$-coloring of $D[M]$ in which $t_0$ has the same color as all its out-neighbors. W.l.o.g. we may choose this color to be $2$, and since $\{t_0\} \cup N_{D[M]}^+(t_0) \supseteq T$, the claim follows.
\end{proof}
\paragraph{Claim 3.} $D[M]$ contains a directed cycle. 
\begin{proof}
Suppose towards a contradiction that $D[M]$ is acyclic. Let $D':=D-M$. Clearly, $D' \in \text{Forb}_\text{ind}(\bivec{K}_2,S_2^+,W_3^+)$. Since $v(D') \le v(D)-1$ and by the minimality of $D$ as a counterexample, we know that $D'$ admits an acyclic $2$-coloring $c':V(D)\setminus M \rightarrow \{1,2\}$ in which $c'(v)=c'(u)=1$ for every $u \in N_D^+(v)$. Let $c:V(D) \rightarrow \{1,2\}$ be defined by $c(x):=c'(x)$ for all $x \in V(D)\setminus M$ and $c(x):=2$ for all $x \in M$. We claim that $c$ is an acyclic coloring of $D$. Suppose towards a contradiction that $C$ is a directed cycle in $D$ which is monochromatic in the coloring $c$. We must have $V(C) \cap M \neq \emptyset$, for otherwise $C$ would form a monochromatic directed cycle in the coloring $c'$ of $D'$. Since $D[M]$ is acyclic, we must also have $V(C)\setminus M\neq \emptyset$. It follows that there exists an arc $(x,y) \in A(C)$ such that $x \in M$ and $y \in V(D)\setminus M$. Then $y \in N_D^+(M) \subseteq N_D^+(v) \cup \{v\}$, and therefore $c(y)=c'(y)=1$, while $c(x)=2$ by definition. This contradicts the fact that $C$ is monochromatic, and hence we have shown that indeed $c$ is an acyclic coloring of $D$. Moreover, $c(v)=c(u)=1$ for every $u \in N_D^+(v)$. This contradicts our initial assumptions on $D$ that such a coloring does not exist. Hence, $D[M]$ cannot be acyclic, proving the claim.
\end{proof}
Claim 3 in particular implies that $|M| \ge 3$ and $M \setminus T \neq \emptyset$.

Let us further note that since $M$ forms an out-module in $D$, $M \setminus T \neq \emptyset$ is an out-module in the digraph $D-T \in \text{Forb}_\text{ind}(\bivec{K}_2,S_2^+,W_3^+)$, and hence by Lemma~\ref{lem:out-module_contraction} we also have $D_0:=(D-T)/(M\setminus T) \in \text{Forb}_\text{ind}(\bivec{K}_2,S_2^+,W_3^+)$. Also note that since $T \subseteq M \subseteq N_D^-(v)$, we still have $N_D^+(v) \cup \{v\} \subseteq \{x_{M\setminus T}\} \cup (V(D)\setminus M)=V(D_0)$, where we denote by $x_{M\setminus T}$ the vertex in $D_0$ obtained by identifying $M\setminus T$.
\paragraph{Claim 4.} We have $N_{D_0}^+(v)=N_D^+(v)$, $(x_{M\setminus T},v) \in A(F(D_0))$, and for every $u \in N_{D_0}^+(v)$, we have $(u,x_{M\setminus T}) \notin A(D_0)$.
\begin{proof}
The very first claim follows directly from the definition of $D_0$.

We have $M\setminus T \subseteq N_D^-(v)$ and $N_D^+(M) \subseteq \{v\} \cup N_D^+(v)$. This directly implies that $(x_{M\setminus T},v) \in A(D_0)$ and that $N_{D_0}^+(x_{M\setminus T}) \subseteq N_D^+(M) \subseteq N^+(v) \cup \{v\} =N_{D_0}^+(v) \cup \{v\}$. Hence, $v \in N_{D_0}^+(x_{M\setminus T})$ has an out-arc to every other out-neighbor of $x_{M\setminus T}$ in $D_0$, and this shows (by definition) that $(x_{M\setminus T},v) \in A(F(D_0))$.

For the second claim, suppose towards a contradiction that there exists $u \in N_{D_0}^+(v)$ such that $(u,x_{M\setminus T}) \in A(D_0)$. By definition of $D_0$, this means that $u \in N_D^+(v)$ and that there exists a vertex $m \in M\setminus T$ such that $(u,m) \in A(D)$. By definition of $T$, this however shows that $m \in T$, a contradiction. 
\end{proof}
In the following, let $D^\ast$ be the digraph defined by $$V(D^\ast):=V(D_0), A(D^\ast):=A(D_0) \cup \{(x_{M\setminus T},u)|u \in N_{D_0}^+(v)\}.$$
\paragraph{Claim 5.} $D^\ast \in \text{Forb}_\text{ind}(\bivec{K}_2,S_2^+,W_3^+)$.
\begin{proof}
Let $e_i=(x_{M\setminus T},u_i), i=1,\ldots,k$ be a list of the arcs contained in $A(D^\ast) \setminus A(D_0)$ for some $k \ge 0$. For $0 \le i \le k$ let $D_i$ denote the digraph defined by $V(D_i):=V(D_0)$ and $A(D_i):=A(D_0) \cup \{e_1,\ldots,e_i\}$. Note that $D_k=D^\ast$. 

We now claim that $D_i \in \text{Forb}_\text{ind}(\bivec{K}_2,S_2^+,W_3^+)$ and $(x_{M\setminus T},v) \in A(F(D_i))$ for every $i \in \{0,1,\ldots,k\}$ and prove this claim by induction on $i$.

For $i=0$ the claim holds true by the previous discussions and Claim 4. Now let $1 \le i \le k$ and suppose we know that the claim holds for $D_{i-1}$. 

Note that $D_i$ is the digraph obtained from $D_{i-1}$ by adding the arc $e_i=(x_{M\setminus T},u_i)$, where $u_i \in N_{D_0}^+(v)=N_{D_{i-1}}^+(v)$, $(x_{M\setminus T},v) \in A(F(D_{i-1}))$. Note that $e_i \notin A(D_{i-1})$, as well as $(u_i,x_{M\setminus T}) \notin A(D_{i-1})$ by Claim 4. Hence, we may apply Lemma~\ref{lem:arcaddition} to the digraph $D_{i-1}$ with $x=x_{M\setminus T}, y=v, z=u_i$ to find that indeed $D_i \in \text{Forb}_\text{ind}(\bivec{K}_2,S_2^+,W_3^+)$. It remains to show that $(x_{M\setminus T},v) \in A(F(D_i))$. However, the only new out-neighbor of $x_{M\setminus T}$ in $D_i$ compared to $D_{i-1}$ is the vertex $u_i$, which is still dominated by the vertex $v \in N_{D_i}^+(x_{M\setminus T})$ via the arc $(v,u_i)\in A(D_i)$, and hence $v$ still dominates all other out-neighbors of $x_{M\setminus T}$ in $D_i$. This shows that $D_i$ satisfies the induction claim. 

We have shown $D^\ast=D_k \in \text{Forb}_\text{ind}(\bivec{K}_2,S_2^+,W_3^+)$, concluding the proof of Claim~5.
\end{proof}

The number of vertices of $D^\ast$ satisfies $$v(D^\ast)=v(D_0)=v(D)-|T|-(|M\setminus T|-1) \le v(D)-(|M|-1) \le v(D)-2<v(D)$$ since $|M| \ge 3$ by Claim 3. Hence, the minimality of $D$ implies that the assertion of the theorem holds for $D^\ast$. Applying this assertion to the vertex $x_{M\setminus T}$ in $D^\ast$, we find that there exists an acyclic $2$-coloring $c^\ast:V(D^\ast) \rightarrow \{1,2\}$ of $D^\ast$ such that $c^\ast(x_{M\setminus T})=1=c^\ast(u)$ for every $u \in N_{D^\ast}^+(x_{M\setminus T})$. Using the facts $N_{D_0}^+(x_{M\setminus T}) \subseteq N_D^+(v) \cup \{v\}$, $N_{D_0}^+(v)=N_D^+(v)$ and $(x_{M\setminus T},v) \in A(D_0)$, the definition of $D^\ast$ yields that $N_{D^\ast}^+(x_{M\setminus T})=N_D^+(v) \cup \{v\}$. Hence, we have $c^\ast(x_{M\setminus T})=c^\ast(v)=c^\ast(u)=1$ for every $u \in N_D^+(v)$.

Let $c:V(D) \rightarrow \{1,2\}$ be the coloring of $D$ defined by $c(x):=c_M(x)$ for every $x \in M$, and $c(x):=c^\ast(x)$ for every $x \in V(D)\setminus M$. We note that $c(v)=c(u)$ for all $u \in N_D^+(v)$. Hence, by the initial assumption on $D$, the coloring $c$ cannot be acyclic, i.e., there is a directed cycle $C$ in $D$ which is monochromatic in the coloring $c$. Then we must have $V(C)\setminus M \neq \emptyset$, for otherwise $C$ would be a monochromatic directed cycle in the acyclic coloring $c_M$ of $D[M]$. Analogously, if $V(C) \cap M=\emptyset$, then $C$ would be a directed cycle in $D-M \subseteq (D-T)/(M\setminus T)=D_0 \subseteq D^\ast$, a contradiction. Therefore we also have $V(C) \cap M \neq \emptyset$, and hence there must be an arc $(x,y) \in A(C)$ such that $x \in M$ and $y \notin M$. However, this means that $y \in N_D^+(M) \subseteq \{v\} \cup N_D^+(v)$, and hence $c(y)=1$ by the above. Since $C$ is monochromatic, it follows that $V(C) \subseteq c^{-1}(1)$. In particular, since $c(t)=c_M(t)=2$ for every $t \in T$, it follows that $C$ is a directed cycle in $D-T$. Let $z$ be the first vertex of $M$ we meet when traversing the directed cycle $C$ in forward-direction, starting at $y$. Then $z \in M\setminus T$. Let $P$ be the unique directed $x,z$-path contained in $C$. Then $P$ has length at least two and satisfies $V(P) \cap M=\{x,z\}$ and $V(P)\subseteq c^{-1}(1)$. Now $(V(P)\setminus \{x,z\}) \cup \{x_{M\setminus T}\}$ forms the vertex-set of a directed cycle $C^\ast$ in $(D-T)/(M\setminus T)=D_0 \subseteq D^\ast$ containing $x_{M\setminus T}$, and we have $c^\ast(x)=c(x)=1$ for every vertex $x \in V(C^\ast)\setminus \{x_{M\setminus T}\}=V(P)\setminus \{x,z\} \subseteq V(D)\setminus M$. We have $c^\ast(x_{M\setminus T})=1$ by definition of $c^\ast$, and hence $C^\ast$ forms a monochromatic directed cycle of color $1$ in the acyclic coloring $c^\ast$ of $D^\ast$. This contradiction finally shows that our very first assumption, namely that a (smallest) counterexample $D$ to the claim of the theorem exists, was wrong. This concludes the proof of the theorem. 
\end{proof}

\section{$\{S_2^+, W_3^-\}$-Free Oriented Graphs}\label{sec:W3-}
In this section we prove Theorem~\ref{thm:W4minus}, showing that every digraph in $\text{Forb}_{\text{ind}}(\bivec{K}_2,S_2^+,W_3^-)$ is acyclically $4$-colorable. Note that $\text{Forb}_{\text{ind}}(\bivec{K}_2,S_2^+,W_3^-)$ is the class of oriented graphs $D$ such that the out-neighborhood of any vertex in $D$ spans a tournament, and the in-neighborhood of any vertex spans a directed triangle-free graph. In fact, we show the following strengthened statement.
\begin{theorem}
Let $D \in \text{Forb}_{\text{ind}}(\bivec{K}_2,S_2^+,W_3^-)$ and let $(u,v) \in A(D)$. Then $D$ admits an acyclic coloring $c:V(D) \rightarrow \{1,2,3,4\}$ satisfying the additional conditions
$c(u)=1$, $c(x)=1$ for all $x \in N_D^+(u)\setminus N_D^+(v)$ (so $c(v) = 1$) and $c(x) \in \{1,2\}$ for all $x \in N_D^+(v)$. 
\end{theorem}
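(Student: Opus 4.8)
The plan is to prove this strengthened statement by induction on $v(D)$, passing to a vertex-minimal counterexample $D$ together with the arc $(u,v)$ for which no admissible coloring exists. I would first record the local structure forced by the excluded subdigraphs: since $D$ is $S_2^+$-free the out-neighborhood of every vertex induces a tournament, and since $D$ is $W_3^-$-free the in-neighborhood of every vertex induces a directed-triangle-free digraph. As $v\in N_D^+(u)$, every other out-neighbor of $u$ is comparable to $v$ in the tournament $N_D^+(u)$, so $N_D^+(u)=\{v\}\cup A\cup B$ with $A:=N_D^+(u)\cap N_D^+(v)$ and $B:=N_D^+(u)\cap N_D^-(v)$; in particular $N_D^+(u)\setminus N_D^+(v)=\{v\}\cup B$, which is exactly the set the theorem forces into color $1$. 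Because $D[B]$ is simultaneously a sub-tournament of $N_D^+(u)$ and a subdigraph of the directed-triangle-free set $N_D^-(v)$, it is a \emph{transitive} tournament. Likewise $W:=N_D^+(v)$ induces a tournament all of whose in-neighborhoods (taken inside $W$) are directed-triangle-free tournaments, hence transitive; equivalently $D[W]$ is itself a $W_3^-$-free tournament.

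The first self-contained ingredient I would isolate is a lemma stating that \emph{every $W_3^-$-free tournament has dichromatic number at most $2$}. I would prove this separately by induction on the order, reducing to strongly connected tournaments (whose strong components may be colored independently with the palette $\{1,2\}$, since directed cycles stay inside a single component) and exploiting that every in-neighborhood is transitive to split the vertex set into two transitive parts. Applied to $D[W]$ this produces an acyclic $2$-coloring $c_1\colon W\to\{1,2\}$. I would also dispose of two easy situations by minimality: the degenerate case $N_D^+(v)=\emptyset$ is handled directly, and any sink of $D$ other than $v$ may be deleted, colored by induction, and reinserted last (a sink lies on no directed cycle, so it can be recolored in accordance with the constraints without creating a monochromatic cycle), so I may assume $D$ has no such sink.

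The main reduction is to delete $W$ and apply the induction hypothesis to the arc $(u,v)$ in the smaller digraph $D':=D-W$. Since $N_{D'}^+(v)=\emptyset$ and $N_{D'}^+(u)=\{v\}\cup B$, the resulting acyclic coloring $c_0\colon V(D)\setminus W\to\{1,2,3,4\}$ automatically forces $\{u,v\}\cup B$ into color $1$, matching the prescribed constraint on $N_D^+(u)\setminus N_D^+(v)$. Gluing $c_0$ with $c_1$ into a single coloring $c$ then satisfies all three required conditions by construction, so everything comes down to showing that $c$ is acyclic. Monochromatic cycles of colors $3$ and $4$ are confined to $V(D)\setminus W$ and are ruled out by $c_0$, while cycles inside $W$ are ruled out by $c_1$.

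The step I expect to be the genuine obstacle is excluding a monochromatic cycle of color $1$ or $2$ that crosses the cut between $W$ and $V(D)\setminus W$. This is exactly where the precoloring was strengthened: forcing $\{u,v\}\cup B$ to color $1$ pins down the possible entry vertices of a crossing color-$1$ cycle, and restricting $N_D^+(v)$ to $\{1,2\}$ keeps every cut-crossing cycle within two colors, so that only the dominating vertex $v$ (which sends an arc to all of $W$) and the transitive set $B$ must be controlled. My intended mechanism is a rerouting argument: given such a crossing cycle, take a maximal subpath inside $W$, observe that both of its endpoints lie in $N_D^+(v)$, and use the tournament structure of the relevant out-neighborhoods together with an arc-addition step in the spirit of Lemma~\ref{lem:arcaddition} to find a chord that collapses the cycle either entirely into $V(D)\setminus W$ or entirely into $W$, contradicting the acyclicity of $c_0$ or of $c_1$. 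I anticipate that the naive deletion of $W$ may not close this argument, and that the correct remedy is to replace it by the contraction of a suitable out-module contained in $W$ whose out-neighborhood lies in the color-$1$ region, in the spirit of Claim~\ref{claim:cycle_out-module} and Lemma~\ref{lem:out-module_contraction}; this localizes the interface and turns any surviving crossing cycle into a strictly shorter monochromatic cycle on a single side of the cut.
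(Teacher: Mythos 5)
Your structural setup is correct (the partition $N_D^+(u)=\{v\}\cup A\cup B$, the transitivity of $D[B]$, the fact that $D[W]$ for $W:=N_D^+(v)$ is a $W_3^-$-free tournament and hence $2$-colorable via the reversal of Theorem~\ref{thm:transitive}), but the proof has a genuine gap exactly at the step you flag yourself: nothing coordinates the inductive coloring $c_0$ of $D-W$ with the coloring $c_1$ of $W$, and monochromatic cycles crossing the cut really can occur. The inductive constraints pin down only $\{u,v\}\cup B$; they say nothing about the colors of $N_D^-(v)$. A minimal concrete failure: let $D$ have vertices $u,v,x,y$ and arcs $(u,v),(v,x),(x,y),(y,v)$. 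This digraph lies in $\text{Forb}_\text{ind}(\bivec{K}_2,S_2^+,W_3^-)$, here $W=\{x\}$, the digraph $D-W$ is acyclic so the inductive call may legitimately return $c_0(u)=c_0(v)=c_0(y)=1$, and $c_1(x)=1$ is a valid acyclic coloring of $D[W]$; the glued coloring then makes the triangle $v\to x\to y\to v$ monochromatic. So the gluing scheme is unsound as stated, and repairing it is the whole difficulty. Your two proposed remedies do not close this. The tools you invoke ``in the spirit of'' Lemma~\ref{lem:arcaddition}, Claim~\ref{claim:cycle_out-module} and Lemma~\ref{lem:out-module_contraction} are statements about the class $\text{Forb}_\text{ind}(\bivec{K}_2,S_2^+,W_3^+)$; their proofs rest on out-neighbourhoods being \emph{transitive} (the auxiliary digraph $F(D)$, which sends each vertex to the source of its out-neighbourhood, is not even well defined when out-neighbourhoods are merely tournaments, as in the present class). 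Worse, contracting an out-module need not preserve $W_3^-$-freeness at all: an out-module controls arcs \emph{leaving} the set, while a copy of $W_3^-$ whose dominated hub is the contracted vertex is created by arcs \emph{entering} it, so no analogue of Lemma~\ref{lem:out-module_contraction} is available here without a genuinely new argument.

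For comparison, the paper cuts along the closed \emph{in}-neighbourhood of $u$ rather than along $N_D^+(v)$, which is what makes the crossing-cycle analysis close. It colors $N_D^-(u)\setminus B'$ with two \emph{fresh} colors $3,4$ (possible by Theorem~\ref{thm:dirtriangle}, since $D[N_D^-(u)]$ is $\vec{C}_3$-free and $S_2^+$-free), colors $B':=N_D^-(u)\cap N_D^+(v)$ (a set your proposal never considers) with color $2$, and colors $D-(N_D^-(u)\cup\{u\})$ by induction, anchoring the inductive call at the arc from the source of the transitive tournament $D[N_D^+(u)\setminus(N_D^+(v)\cup\{v\})]$ (your $B$) to $v$. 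Then $S_2^+$-freeness forces every arc leaving $N_D^-(u)$ to land in $N_D^+(u)$, and the almost-disjoint palettes force any monochromatic crossing arc to run from $B'$ into $N_D^+(u)\cap N_D^+(v)$; together with $u$ and $v$, its two endpoints then induce a copy of $W_3^-$, a contradiction. The two ingredients missing from your scheme are precisely these: fresh colors on the ``uncontrolled'' side of the cut so that palette reasons alone almost exclude crossing cycles, and the set $B'=N_D^-(u)\cap N_D^+(v)$, which is the place where $W_3^-$-freeness finally enters the argument.
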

\begin{proof}
Suppose towards a contradiction the claim of the theorem was wrong, and let $D$ be a counterexample minimizing $v(D)$. Then there exists an arc $(u,v) \in A(D)$ such that $D$ does \emph{not} admit an acyclic coloring $c:V(D) \rightarrow \{1,2,3,4\}$ satisfying the additional conditions
$c(u)=1$, $c(x)=1$ for all $x \in N_D^+(u)\setminus N_D^+(v)$ and $c(x) \in \{1,2\}$ for all $x \in N_D^+(v)$. Let us define $A:=N_D^+(u) \setminus (N_D^+(v) \cup \{v\})$ and $B:=N_D^-(u) \cap N_D^+(v)$. We start with some useful observations concerning these sets.
\paragraph{Claim 1.} $A \subseteq N_D^-(v)$, and $D[A]$ and $D[B]$ are transitive tournaments.
\begin{proof}
To show $A \subseteq N_D^-(v)$, let $x \in A=N_D^+(u) \setminus (N_D^+(v) \cup \{v\})$ be arbitrary. Since $(u,x),(u,v) \in A(D)$ and $x, u, v$ cannot induce an $S_2^+$ in $D$, the vertices $x$ and $v$ must be equal or adjacent in $D$. Since $x \notin N_D^+(v) \cup \{v\}$, it follows that $x \in N_D^-(v)$, as claimed.

Since $D[N_D^+(u)]$ is a tournament and $A \subseteq N_D^+(u)$, also $D[A]$ is a tournament. Furthermore $D[N_D^-(v)]$ is directed triangle-free, and with $A \subseteq N_D^-(v)$ also $D[A]$ is directed triangle-free, i.e., a transitive tournament, as claimed.

Similarly, since $D[N_D^-(u)]$ is directed triangle-free, and since $D[N_D^+(v)]$ is a tournament, $B=N_D^-(u) \cap N_D^+(v)$ implies that $D[B]$ must be both directed triangle-free and a tournament, i.e., a transitive tournament.
\end{proof}
In the following, let us denote by $D':=D-(N_D^-(u) \cup \{u\})$ the induced subdigraph of $D$ obtained by deleting the closed in-neighborhood of $u$. We clearly have $D' \in \text{Forb}_{\text{ind}}(\bivec{K}_2,S_2^+,W_3^-)$ and $v(D')<v(D)$, and hence by minimality of $D$ the theorem statement holds for $D'$. 
\paragraph{Claim 2.} There exists an acyclic coloring $c':V(D') \rightarrow \{1,2,3,4\}$ of $D'$ such that $c'(v)=1$, $c'(x)=1$ for all $x \in A$ and $c'(x) \in \{1,2\}$ for all $x \in N_{D'}^+(v)$.
\begin{proof}
We distinguish the two cases $A=\emptyset$ and $A \neq \emptyset$.

Suppose first that $A=\emptyset$. If $N_{D'}^+(v)=\emptyset$, then applying the theorem statement to $D'$ (for an arbitrarily chosen arc) yields that $\vec{\chi}(D') \le 4$, and hence there exists an acyclic coloring $c':V(D') \rightarrow \{1,2,3,4\}$. Since $v$ is a sink in $D'$, no directed cycle in $D'$ contains $v$. Consequently, we may assume w.l.o.g. (possibly by recoloring) that $c'(v)=1$. In particular, since $A=\emptyset, N_{D'}^+(v)=\emptyset$, the remaining two statements of Claim~2 are satisfied vacuously for $c'$, concluding the proof in this case. 

On the other hand, if $N_{D'}^+(v) \neq \emptyset$, then there exists an arc in $D'$ leaving $v$. Fix an arbitrary such arc $(v,y)$. Applying the Theorem statement to this arc in $D'$, we find that there is an acyclic coloring $c':V(D') \rightarrow \{1,2,3,4\}$ such that $c'(v)=1$, $c'(x) = 1$ for all $x \in N^+_{D'}(v) \setminus N^+_{D'}(y)$ and $c'(x) \in \{1,2\}$ for all $x \in N^+_{D'}(y)$; in particular, and $c'(x) \in \{1,2\}$ for all $x \in N_{D'}^+(v)$. Again, this shows that the claim holds true. 

Next suppose that $A \neq \emptyset$. By Claim 1, $D[A]$ is a transitive tournament. Let $a$ be the unique source-vertex of this tournament. Since $a \in A \subseteq N_D^-(v)$, it follows that $(a,v) \in A(D')$. Hence, we may apply the Theorem statement to the arc $(a,v)$ in $D'$ and find that there exists an acyclic coloring $c':V(D') \rightarrow \{1,2,3,4\}$ such that $c'(a)=1$, $c'(x)=1$ for all $x \in N_{D'}^+(a) \setminus N_{D'}^+(v)$ (in particular $c'(v)=1$), and $c'(x) \in \{1,2\}$ for all $x \in N_{D'}^+(v)$.
Since $a$ is the source of $D[A]=D'[A]$ and since $A \cap N_D^+(v)=\emptyset$, we have $\{a\} \cup (N_{D'}^+(a) \setminus N_{D'}^+(v)) \supseteq A$ and thus $c'(x)=1$ for all $x \in A$, as required. This shows the assertion of Claim 2 and concludes the proof.
\end{proof}
\paragraph{Claim 3.} There exists an acyclic coloring $c^-:N_D^-(u) \rightarrow \{2,3,4\}$ of $D[N_D^-(u)]$ such that $c^-(x)=2$ for all $x \in B$ and $c^-(x) \in \{3,4\}$ for all $x \in N_D^-(u)\setminus B$.
\begin{proof}
Since $D \in \text{Forb}_{\text{ind}}(\bivec{K}_2,S_2^+,W_3^-)$, we have $D[N_D^-(u)] \in \text{Forb}_{\text{ind}}(\bivec{K}_2,S_2^+,\vec{C}_3)$. By Theorem~\ref{thm:dirtriangle} there exists an acyclic coloring of $D[N_D^-(u)]-B$ using only colors $3$ and $4$. Clearly, $D[B]$ as a transitive tournament (see~Claim 1) admits an acyclic coloring only with color $2$. Putting these colorings together yields an acyclic coloring $c'$ of $D[N_D^-(u)]$ with the required properties.
\end{proof}
Let $c:V(D) \rightarrow \{1,2,3,4\}$ be the coloring defined by $c(u):=1$, $c(x):=c^-(x)$ for all $x \in N_D^-(u)$ and $c(x):=c'(x)$ for all $x \in V(D)\setminus (N_D^-(u) \cup \{u\})$. 

Note that from the properties of $c'$ given by Claim 2 we have $c(u)=c(v)=1$ and $c(x)=1$ for all $x \in A=N_D^+(u)\setminus (N_D^+(v) \cup \{v\})$. Furthermore, since $N_D^+(v)=N_{D'}^+(v) \cup B$, the properties of $c'$ and $c^-$ imply that $c(x) \in \{1,2\}$ for all $x \in N_D^+(v)$. 

Given these properties, our initial assumption concerning $D$ implies that $c$ cannot be an acyclic coloring of $D$, that is, there is a directed cycle $C$ in $D$ which is monochromatic under $c$. Since $c'=c|_{V(D')}$ and $c^-=c|_{N_D^-(u)}$, we must have $V(C) \cap (N_D^-(u) \cup \{u\}) \neq \emptyset$ and $V(C) \setminus N_D^-(u) \neq \emptyset$, for otherwise $c'$ resp.~$c^-$ would not be acyclic. Further note that $u \notin V(C)$, for every arc in $D$ entering $u$ has its tail colored with either $2, 3$ or $4$, while its head, $u$, receives color $1$ under $c$ (so a directed cycle containing $u$ cannot be monochromatic). Hence, there must exist an arc $(x,y) \in A(C)$ such that $x \in N_D^-(u)$ and $y \in V(D)\setminus (N_D^-(u) \cup \{u\})$. Since $(x,u), (x,y) \in A(D)$ and $D$ is induced $S_2^+$-free, $u$ and $y$ must be equal or adjacent, and since $y \notin N_D^-(u) \cup \{u\}$, we have $y \in N_D^+(u)$. By the properties of $c'$ and $c^-$, we have $N_D^+(u)\setminus N_D^+(v)=A \cup \{v\} \subseteq c^{-1}(\{1\})$, $B \subseteq c^{-1}(\{2\})$ and $N_D^-(u)\setminus B \subseteq c^{-1}(\{3,4\})$. The cycle $C$ is monochromatic, therefore $c(x)=c(y)$. From this we conclude that $y \in N_D^+(v)$, and hence $c(y) \in \{1,2\}$. This is only possible if $c(x)=c(y)=2$, and hence $x \in B$. It follows that $x,y,u,v \in V(D)$ are distinct vertices satisfying $(x,y),(u,y),(v,y) \in A(D)$, as well as $(x,u), (u,v), (v,x) \in A(D)$ (here we used that $x \in B=N_D^-(u) \cap N_D^+(v)$). This however means that $x,y,u,v$ induce a copy of $W_3^-$ in $D$, which is absurd considering that $D \in \text{Forb}_{\text{ind}}(\bivec{K}_2,S_2^+,W_3^-)$. This shows that our very first assumption concerning the existence of a smallest counterexample $D$ was wrong. This concludes the proof of the theorem.
\end{proof}

\section{Adding a Dominating Sink to a Hero}\label{sec:addsink}

In this section our goal is to prove Theorem~\ref{thm:addsink}. Let us first prove the following lemma.

\begin{lemma}\label{lemma:shortpath}
Let $D \in \text{Forb}_\text{ind}(\bivec{K}_2, S_2^+)$ and let $C \in \mathbb{N}$ be such that $\vec{\chi}(D[N_D^-(x)]) \le C$ for every $x \in V(D)$. Let $u, v \in V(D)$ and let $P$ be a shortest $u$-$v$-dipath in $D$. Let $X:=V(P) \cup N_D^-(V(P))$. Then $\vec{\chi}(D[X]) \le 3C+2$.
\end{lemma}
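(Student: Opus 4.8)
The plan is to split the $3C+2$ colours into a palette of size $2$ reserved for the path $P=(p_0,\dots,p_\ell)$ and a disjoint palette of size $3C$ reserved for the remaining in-neighbours $N:=N_D^-(V(P))$, so that any monochromatic directed cycle is forced to live \emph{entirely} inside $V(P)$ or \emph{entirely} inside $N$. First I would record the standard shortest-path facts: writing $d(x)$ for the length of a shortest $u$-$x$-dipath, a shortest path satisfies $d(p_i)=i$, and it has no forward chords, i.e. the only arcs of $D$ among $\{p_0,\dots,p_\ell\}$ are the path arcs $p_i\to p_{i+1}$ together with \emph{backward} arcs $p_j\to p_i$ with $j>i$ (a forward chord $p_i\to p_j$ with $j\ge i+2$ would shorten $P$). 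Colouring $p_i$ by $i\bmod 2$ then makes $D[V(P)]$ acyclically $2$-coloured: inside one colour class every arc is backward and hence strictly decreases the index, so no directed cycle can close up.

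The heart of the argument is the treatment of $N$. For $w\in N$ let $J(w):=\{i:(w,p_i)\in A(D)\}\neq\emptyset$ and set $a(w):=\min J(w)$, so in particular $w\in N_D^-(p_{a(w)})$. The key locality statement I would prove is:
\[
\text{if } (w,w')\in A(D) \text{ with } w,w'\in N,\ \text{then } a(w')\le a(w)+2.
\]
To see this, note that $p_{a(w)}$ and $w'$ are two distinct out-neighbours of $w$, so by $S_2^+$-freeness (out-neighbourhoods are tournaments) they are adjacent. If $w'\to p_{a(w)}$, then $a(w)\in J(w')$ and hence $a(w')=\min J(w')\le a(w)$. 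If instead $p_{a(w)}\to w'$, then $u=p_0\to\cdots\to p_{a(w)}\to w'$ gives $d(w')\le a(w)+1$, so for every $j\in J(w')$ we get $j=d(p_j)\le d(w')+1\le a(w)+2$, whence $a(w')\le a(w)+2$; here I crucially use the shortest-path identity $d(p_j)=j$. This is the step I expect to be the main obstacle, since it is exactly where $S_2^+$-freeness and the shortest-path hypothesis must be combined, and where the choice $a(w)=\min J(w)$ (rather than $\max$) is essential — note the inequality is purely an \emph{upper} bound and deliberately permits large backward jumps $a(w')\ll a(w)$.

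With this in hand I would colour each $w\in N$ by the pair $(a(w)\bmod 3,\ \gamma_{a(w)}(w))$, where for each used index $a$ I fix once and for all an acyclic $C$-colouring $\gamma_a:N_D^-(p_a)\to\{1,\dots,C\}$ (which exists by hypothesis); this uses $3C$ colours disjoint from the two path colours. Any monochromatic directed cycle $Z$ cannot meet both $V(P)$ and $N$ (the palettes are disjoint), and cannot lie in $V(P)$ by the parity argument, so $Z\subseteq N$ and all its vertices share a fixed residue $r$ and a fixed shade $s$. Along each arc $(w,w')$ of $Z$ the locality bound gives $a(w')-a(w)\le 2$, while $a(w')\equiv a(w)\pmod 3$ forces $a(w')-a(w)\in\{0,-3,-6,\dots\}$, i.e. $a(w')-a(w)\le 0$. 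Since these differences sum to $0$ around $Z$, they must all vanish, so $Z$ lies inside a single $N_D^-(p_a)$ and is monochromatic for $\gamma_a$ — contradicting that $\gamma_a$ is acyclic. Hence the combined colouring is acyclic and uses $2+3C$ colours, giving $\vec{\chi}(D[X])\le 3C+2$.
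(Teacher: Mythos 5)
Your proposal is correct and follows essentially the same route as the paper's proof: you partition $N_D^-(V(P))$ by the first path vertex each vertex points to (the paper's sets $A_i$ are exactly your level sets $a(w)=i$), prove the same key claim that an arc inside this set cannot jump forward by $3$ or more (the paper constructs the shorter path $x_0,\dots,x_i,y,x_j,\dots,x_\ell$ explicitly where you phrase it via the distance function $d$), and then use the same mod-$3$ grouping with $C$ colours per group plus a parity $2$-colouring of $P$. The only differences are presentational, so there is nothing to fix.
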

\begin{proof}
Let $u=x_0,x_1, \ldots,x_{\ell-1},x_\ell=v$ be the vertex-trace of $P$ and consider the partition $(A_i)_{i=1}^{\ell}$ of $N_D^-(V(P))$ where $A_i:=N_D^-(x_i)\setminus (V(P) \cup \bigcup_{1 \le j<i}{A_j}), i=0,\ldots,\ell$. 
\paragraph{Claim.} Let $0 \le i<j \le \ell$ with $j-i \ge 3$. Then there exists no arc in $D$ starting in $A_i$ and ending in $A_j$.

\begin{proof}
Suppose towards a contradiction that there are vertices $x \in A_i$, $y \in A_j$ with $(x,y) \in A(D)$. Since $(x,x_i) \in A(D), (x,y) \in A(D)$ and $x_i \neq y$ (since $x_i \in V(P)$ and $y \notin V(P)$), $x_i$ and $y$ must be adjacent in $D$. By definition of $A_j$ we have $A_j \cap N_D^-(x_i)=\emptyset$ and hence $(x_i,y) \in A(D)$. However, now the directed path described by the vertices $u=x_0,x_1,\ldots,x_i,y,x_j,\ldots,x_\ell=v$ is a $u$-$v$-dipath in $D$ shorter than $P$, a contradiction. This proves the claim.
\end{proof}
For every $0 \le i \le \ell$ we have $\vec{\chi}(D[A_i]) \le \vec{\chi}(D[N_D^-(x_i)]) \le C$. Let us define the set $B_r:=\bigcup\{A_i|i \equiv r \text{ (mod }3)\}$ for every $r \in \{0,1,2\}$. From the above claim it follows that no directed cycle in $D[B_r]$ intersects two different sets $A_i, A_j$. Hence, we have $$\vec{\chi}(D[B_r]) \le \max\{\vec{\chi}(D[A_i])|i \equiv r \text{ (mod }3)\} \le C$$ for $r=0,1,2$. Further note that the two sets $$V_0:=\{x_i|i \in \{0,\ldots,\ell\} \text{ even}\}, V_1:=\{x_i|i \in \{0,\ldots,\ell\} \text{ odd}\}$$ both induce acyclic subdigraphs of $D$, for otherwise $D$ would not be a shortest $u$-$v$-dipath in $D$. Since $X$ is the disjoint union of $B_0, B_1, B_2, V_0, V_1$, we conclude 
$$\vec{\chi}(D[X]) \le \vec{\chi}(D[B_0])+\vec{\chi}(D[B_1])+\vec{\chi}(D[B_2])+\vec{\chi}(D[V_0])+\vec{\chi}(D[V_1]) \le 3C+2,$$ as required.
\end{proof}

\begin{proof}[Proof of Theorem~\ref{thm:addsink}]
Let $\{\bivec{K}_2, S_2^+, H\}$ be heroic and $C:=\vec{\chi}(\text{Forb}_\text{ind}(\bivec{K}_2, S_2^+, H))$. 

We claim that every digraph $D \in \text{Forb}_\text{ind}(\bivec{K}_2, S_2^+, H^-)$ admits an acyclic coloring with $C^-:=v(H)(C+1)+3C+2$ colors. 

Suppose towards a contradiction that there exists some $D \in \text{Forb}_\text{ind}(\bivec{K}_2, S_2^+, H^-)$ with $\vec{\chi}(D')>C'$, and choose such a $D$ minimizing $v(D)$. Then we have $\vec{\chi}(D) > C' \ge C$ and hence there is $Y \subseteq V(D)$ such that $D[Y]$ is isomorphic to $H$. Furthermore, since the dichromatic number of $D$ is the maximum of the dichromatic numbers of its strong components, the minimality of $v(D)$ implies that $D$ is strongly connected.

Let $S \supseteq Y$ denote a set of vertices in $D$ defined as follows: 

If $D[Y]$ (resp.~$H$) is strongly connected, put $S:=Y$. Otherwise, let $Y_1,\ldots,Y_t$ be a partition of $Y$ into the $t \ge 2$ strong components of $D[Y]$ such that all arcs between $Y_i$ and $Y_j$ start in $Y_i$ and end in $Y_j$, for any $1 \le i <j \le t$ (note that since $D[Y]$ is a tournament all elements of $Y_i \times Y_j$ are arcs of $D[Y]$ for $1 \le i<j \le t$). Now pick $u \in Y_t,$ $v \in Y_1$ arbitrarily, let $P$ be a shortest $u$-$v$-dipath in $D$ and put $S:=V(P) \cup Y$. Let us note that in any case, $D[S]$ is strongly connected.

Let $Z:=S \cup N_D^-(S)$. Then we have $Z=X \cup Y \cup N_D^-(Y)$, where $X$ is defined as $X:=\emptyset$ if $S=Y$ and as $X:=V(P) \cup N_D^-(V(P))$ otherwise. For every $x \in V(D)$ we know that since $D$ is $H^-$-free, the digraph $D[N_D^-(x)]$ is contained in $\text{Forb}_\text{ind}(\bivec{K}_2, S_2^+, H)$, and hence $\vec{\chi}(D[N_D^-(x)]) \le C$. Using Lemma~\ref{lemma:shortpath} we obtain that $\vec{\chi}(D[X]) \le 3C+2$. Putting it all together, we find that
$$\vec{\chi}(D[Z]) \le \sum_{y \in Y}{\underbrace{\vec{\chi}(D[\{y\} \cup N_D^-(y)])}_{\le C+1}}+\vec{\chi}(D[X]) \le v(H)(C+1)+3C+2=C'.$$ 
\paragraph{Claim.} No arc in $D$ leaves $Z$. 
\begin{proof}
We first show that there do not exist $z \in S$, $w \in V(D)\setminus Z$ such that $(z,w) \in A(D)$. Suppose towards a contradiction that such an arc $(z,w)$ exists. We claim that then $(s,w) \in A(D)$ for every $s \in S$. Consider $s \in S$ arbitrarily. Since $D[S]$ is strongly connected, there exist vertices $z=s_0,s_1,\ldots,s_k=s$ in $S$ such that $(s_{i-1},s_i) \in A(D)$, $i=1,\ldots,k$. We show $(s_i,w) \in A(D)$ for all $i=0,\ldots,k$ by induction on $i$. Clearly it is true for $i=0$, so suppose that $1 \le i \le k$ and we have established that $(s_{i-1},w) \in A(D)$. Since $w \notin Z$, $s_i \in Z$, we have $w \neq s_i$ and $(s_{i-1},w), (s_{i-1},s_i) \in A(D)$. Since $D$ is $S_2^+$-free, it follows that $s_i$ and $w$ are adjacent. However, since $w \notin Z=S \cup N_D^-(S) \supseteq N_D^-(s_i)$, we must have $(s_i,w) \in A(D)$, as claimed. 

This shows that indeed $(s,w) \in A(D)$ for all $s \in S$. However, since $S \supseteq Y$ and since $D[Y]$ is isomorphic to $H$, it follows that $D[Y \cup \{w\}]$ is an induced subdigraph of $D$ isomorphic to $H^-$, a contradiction to $D\in \text{Forb}_\text{ind}(\bivec{K}_2, S_2^+, H^-)$. This shows that there are no arcs from $S$ to $V(D)\setminus Z$.

To complete the proof, let us show that there are no arcs starting in $Z \setminus S=N_D^-(S)$ that end in $V(D)\setminus Z$. Suppose towards a contradiction that there exist $z \in N_D^-(S)$ and $w \in V(D)\setminus Z$ with $(z,w) \in A(D)$. Then there is a vertex $s \in S$ such that $(z,s) \in A(D)$. Since $s \neq w$, $(z,s),(z,w) \in A(D)$ and $D$ is $S_2^+$-free we find that $s$ and $w$ are adjacent. Since $w \notin Z \supseteq N_D^-(s)$, it follows that $(s,w) \in A(D)$. However, this yields a contradiction, since we showed above that no arc in $D$ starts in $S$ and ends in $V(D)\setminus Z$. 
All in all, the claim follows.
\end{proof} 
Since $D$ is strongly connected and $Z \neq \emptyset$ (since $Z \supseteq Y$), it follows that $Z=V(D)$, and hence that $\vec{\chi}(D)=\vec{\chi}(D[Z]) \le C'$, a contradiction to our initial assumption. This concludes the proof of the theorem.
\end{proof}

\section{Oriented $4$-Vertex-Paths}\label{sec:4path}
In this section we establish that $\{\bivec{K}_2,\vec{K}_3,P^+(1,1,1)\}$ is heroic proving Theorem~\ref{thm:N}.
\begin{proof}[Proof of Theorem~\ref{thm:N}]
We prove by induction on $n$ that every directed graph on $n$ vertices $D \in \text{Forb}_\text{ind}(\bivec{K}_2,\vec{K}_3,P^+(1,1,1))$ admits an acyclic $2$-coloring. The claim trivially holds for $n=1$, so let $n \ge 2$ and suppose that every digraph in $\text{Forb}_\text{ind}(\bivec{K}_2,\vec{K}_3,P^+(1,1,1))$ having less than $n$ vertices is $2$-colorable. Pick some $x \in V(D)$ arbitrarily. Let us define a sequence $X_0,X_1,X_2,\ldots$ of subsets of $V(D)$ as follows: 

$$X_i:=\begin{cases} \{x\}, & \text{ if }i=0, \cr N^+(X_{i-1}) \setminus \bigcup_{j=0}^{i-1}{X_j}, & \text{ if }i \text{ odd}, \cr N^-(X_{i-1}) \setminus \bigcup_{j=0}^{i-1}{X_j}, & \text{ if }i \ge 2 \text{ even}. \end{cases}$$
The sets $(X_i)_{i \ge 0}$ are by definition pairwise disjoint, and so there exists $k \ge 1$ such that $X_1,\ldots,X_k \neq \emptyset$ and $X_i=\emptyset$ for all $i>k$. 
\paragraph{Claim.} $X_i$ is an independent set of $D$ for every $i \ge 0$.
\begin{proof}
We prove the claim by induction on $i$. The claim trivially holds for $i=0$ since $X_0=\{x\}$, and since $D$ does not contain a transitive triangle, also $X_1=N^+(x)$ must be an independent set in $D$. Now let $i \ge 2$ and suppose that we already established that $X_0,\ldots,X_{i-1}$ are independent. To show that $X_i$ is independent, let us suppose towards a contradiction that there are $x,y \in X_i$ such that $(x,y) \in A(D)$. By definition of the sets $X_i$ there are vertices $x_1,y_1 \in X_{i-1}$ and $x_2,y_2 \in X_{i-2}$ such that the following holds: $(x_1,x_2),(x_1,x),(y_1,y_2), (y_1,y) \in A(D)$ if $i$ is odd, respectively $(x_2,x_1),$ $(x,x_1),$ $(y_2,y_1),$ $(y,y_1) \in A(D)$ if $i$ is even. We must have $x_1 \neq y_1$ in any case, since otherwise the vertices $x_1=y_1,x,y$ would induce a $\vec{K}_3$ in $D$. Let us now consider the oriented $4$-vertex-path $P$ in $D$ defined as $P=x,(x,y),y,(y_1,y),y_1,(y_1,y_2),y_2$ if $i$ is odd, respectively as $P=x_2,(x_2,x_1),x_1,(x,x_1),x,(x,y),y$ if $i$ is even. In order for this path not to be an induced copy of $P^+(1,1,1)$, two non-consecutive vertices of the path must be adjacent. However, since $D$ does not contain transitive triangles, this is only possible if $x$~and~$y_2$ ($i$ odd) respectively $x_2$ and $y$ ($i$ even) are adjacent. Since $x \notin X_{i-1}$, we have $x \notin N^-(X_{i-2})$ if $i$ is odd and $y \notin N^+(X_{i-2})$ if $i$ is even. Since $x_2,y_2 \in X_{i-2}$ we conclude that $(y_2,x) \in A(D)$ if $i$ is odd and $(y,x_2) \in A(D)$ if $i$ is even. In both cases we conclude that $x_2 \neq y_2$, since otherwise the vertices $x_2=y_2,x_1,x$ respectively $x_2=y_2,y_1,y$ would induce a transitive triangle in $D$. Now consider the oriented path $Q$ in $D$ defined as $Q=y_2,(y_2,x),x,(x_1,x),x_1,(x_1,x_2),x_2$ if $i$ is odd and as $Q=y_2,(y_2,y_1),y_1,(y,y_1),y,(y,x_2),x_2$ if $i$ is even. In order for $Q$ not to be an induced copy of $P^+(1,1,1)$ in $D$ and since $D$ does not contain transitive triangles, this implies in both cases that the endpoints $x_2$ and $y_2$ of $Q$ must be adjacent. This contradicts the induction hypothesis that $X_{i-2}$ is an independent set. Hence, our assumption was wrong, $X_i$ is indeed independent. This concludes the proof of the claim.
\end{proof}

Let $X:=X_0 \cup \dots \cup X_k$ and $D':=D-X$. By the induction hypothesis $D'$ admits an acyclic $2$-coloring $c':V(D') \rightarrow \{1,2\}$. Let us now define $c:V(D) \rightarrow \{1,2\}$ by $c(x):=c'(x)$ for every $x \in V(D) \setminus X$, $c(x):=1$ for every $x \in X_i$ such that $i$ is even, and $c(x):=2$ for every $x \in X_i$ such that $i$ is odd. We claim that $D$ defines an acyclic coloring of $D$. Suppose towards a contradiction that there exists a monochromatic directed cycle $C$ in $(D,c)$. Since $c'$ is an acyclic coloring of $D'$, we must have $V(C) \cap (X_0 \cup \dots \cup X_k) \neq \emptyset$. Note that by definition of the sets $(X_i)_{i \ge 0}$ we have $N^+\left(\bigcup_{i \text{ even}}X_i\right), N^-\left(\bigcup_{i \text{ odd}}X_i\right) \subseteq X$. Hence, no arc of $D$ starts in $c^{-1}(\{1\}) \cap X$ and ends in $V(D)\setminus X$, and no arc of $D$ starts in $c^{-1}(\{2\}) \cap X$ and ends in $V(D)\setminus X$. Since $V(C)\subseteq c^{-1}(t)$ for some $t \in \{1,2\}$, the strong connectivity of $C$ shows that in fact $V(C) \subseteq c^{-1}(t) \cap X$ for some $t \in \{1,2\}$. Let $i_0 \ge 0$ be smallest such that $X_{i_0} \cap V(C) \neq \emptyset$. Let $u \in X_{i_0} \cap V(C) \neq \emptyset$, and let $u^-,u^+ \in V(C)$ be such that $(u^-,u),(u,u^+) \in A(C)$. Since $X_{i_0}$ is an independent set, and by definition of the coloring $c$, we must have $u^- \in X_{i_0+2s^-},u^+ \in X_{i_0+2s^+}$ for integers $s^-,s^+ \ge 1$. On the other hand, we have $u^+ \in N^+(X_{i_0}) \setminus \bigcup_{j=0}^{i_0-1}{X_j}=X_{i_0+1}$ if $i_0$ is even and $u^- \in N^-(X_{i_0}) \setminus \bigcup_{j=0}^{i_0-1}{X_j}=X_{i_0+1}$ if $i_0$ is odd, in both cases yielding a contradiction since $X_{i_0+2s^+},X_{i_0+2s^-}$ are disjoint from $X_{i_0+1}$. This shows that our assumption was wrong, indeed, $c$ is an acyclic coloring of $D$. Hence, $\vec{\chi}(D) \le 2$, concluding the proof.
\end{proof}

\section{Conclusion}\label{sec:inducedconclusion}
In the first three sections of this paper we have proved that set $\{\bivec{K}_2,S_2^+,H\}$ is heroic for several small heroes $H$, and in particular we resolved Conjecture~\ref{con:S2+}. It would be interesting to prove that in fact, for any hero $H$, $\{\bivec{K}_2,S_2^+,H\}$ is heroic, as this would be a broad generalization of the main result of Berger et al.~\cite{heros} from tournaments to \emph{locally out-complete} oriented graphs, i.e., oriented graphs in which the out-neighborhood of every vertex induces a tournament. This class of digraphs has been thoroughly studied in the past, see for instance~\cite{bangjensen2} for a survey of results on locally complete digraphs. 

The smallest open case of this problem would be to show that $\{\bivec{K}_2,S_2^+,\vec{K}_4^s\}$ is heroic, where $\vec{K}_4^s$ denotes the unique strong tournament on four vertices. It seems that already for this case a new method is required. We do however believe that the following is true.
\begin{conjecture} 
$\vec{\chi}(\text{Forb}_\text{ind}(\bivec{K}_2,S_2^+,\vec{K}_4^s))=3$.
\end{conjecture}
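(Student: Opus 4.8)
The plan is to prove the two inequalities separately; the lower bound $\vec{\chi}(\text{Forb}_\text{ind}(\bivec{K}_2,S_2^+,\vec{K}_4^s)) \ge 3$ should follow from an explicit construction, and the real work lies in the upper bound $\vec{\chi} \le 3$. For the lower bound, note that a single directed triangle or a copy of $W_3^+$ is still $2$-colorable, and in fact every $\vec{K}_4^s$-free \emph{tournament} is $2$-colorable (its strong components are singletons or directed triangles, see below); hence any witness of dichromatic number $3$ must genuinely exploit that the class is strictly larger than the class of tournaments. I would therefore look for an iterated ``locally out-complete'' gadget, in the spirit of shift graphs, whose out-neighbourhoods remain $\vec{K}_4^s$-free, the point to verify being precisely that no four vertices induce the strong $4$-tournament.

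For the upper bound I would first isolate the local structure forced by the class. Since $D$ is $S_2^+$-free and has no digons, $N^+(v)$ induces a tournament for every $v$; since $D$ is $\vec{K}_4^s$-free, this tournament contains no strong subtournament on four vertices. By a classical theorem of Moon, every strong tournament on at least four vertices contains an induced $\vec{K}_4^s$; hence a $\vec{K}_4^s$-free tournament is exactly one whose strong components are singletons or directed triangles, linearly (transitively) ordered. Consequently every nonempty $N^+(v)$ has a well-defined \emph{top component} $\sigma(v)$ --- a single vertex or a $\vec{C}_3$ --- which dominates $N^+(v)\setminus \sigma(v)$. This is the exact analogue of the source used to define $F(D)$ in Section~\ref{sec:outransitive}: when $\sigma(v)=\{y\}$ we recover $N^+(v)\subseteq N^+(y)\cup\{y\}$ as in Claim~\ref{claim:F}, and when $\sigma(v)$ is a triangle we instead get $N^+(v)\subseteq N^+(y)\cup \sigma(v)$ for each $y\in\sigma(v)$.

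The second ingredient is a global ``triangle as module'' property: given any directed triangle $T$ and any vertex $w$ adjacent to all three vertices of $T$, the four of them induce $\vec{K}_4^s$ unless $w$ dominates $T$ or is dominated by $T$; hence in our class $w$ must dominate $T$ or be dominated by it. I would use this, together with the top-component map, to rebuild the toolbox of Section~\ref{sec:outransitive} for the new class: an out-module contraction lemma (the analogue of Lemma~\ref{lem:out-module_contraction}, now with $\vec{K}_4^s$ in place of $W_3^+$), an arc-addition lemma (analogue of Lemma~\ref{lem:arcaddition}), and the existence of an out-module $M\subseteq N^-(v)$ with $N^+(M)\subseteq N^+(v)\cup\{v\}$ (analogue of Lemma~\ref{lem:out-moduleinin}), where the directed cycles feeding these modules are now produced either by the generalised $F(D)$ or directly by top-triangles. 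Granting these, the coloring itself would run as a minimal-counterexample induction proving a precolored strengthening (as in Theorem~\ref{thm:transitiveprecoloring}) but with three colors: contract a suitable out-module, $3$-color the smaller digraph with prescribed behaviour on a closed out-neighbourhood, color the module recursively, and glue, using $N^+(M)\subseteq N^+(v)\cup\{v\}$ to kill monochromatic cycles crossing the module boundary. Morally, two colors should suffice on the ``triangle-contracted skeleton'' by Theorem~\ref{thm:transitive}, the third color being spent to resolve cycles living inside the triangular top-components.

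\textbf{Main obstacle.} The difficulty, and the reason the authors flag that ``a new method is required,'' is exactly that the source is now a triangle rather than a single vertex. Two things break. First, the structural lemmas must be re-proved against the $4$-vertex pattern $\vec{K}_4^s$, and a directed triangle attached to various configurations produces substantially more cases than $W_3^+$; in particular I do not expect top-triangles to be out-modules unconditionally, so manufacturing genuine out-modules (the analogue of Claim~\ref{claim:cycle_out-module}) is already a real hurdle. Second, and more seriously, the inductive coloring invariant must simultaneously pin down the internal $2$-coloring of a triangular top-component --- which itself contains a directed cycle --- and survive the contraction of an out-module to a single vertex; reconciling ``a triangle that must use two colors'' with ``one contracted vertex'' is precisely where the argument of Section~\ref{sec:outransitive} has no counterpart, and I expect this is the point at which a genuinely new idea, rather than a direct adaptation, is needed.
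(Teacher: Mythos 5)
The statement you are attempting is an open conjecture in the paper, not a theorem: the paper offers no proof of either inequality, explicitly says that ``a new method is required'' already for showing $\{\bivec{K}_2,S_2^+,\vec{K}_4^s\}$ is heroic, and only sketches a lower-bound witness. So there is no proof of record to match, and your proposal does not close the gap either. Concretely, for the lower bound you correctly note that a witness must lie outside the class of tournaments (a $\vec{K}_4^s$-free tournament has strong components of size at most $3$, hence dichromatic number at most $2$), but you then only promise to ``look for'' a shift-graph-like gadget; the paper actually exhibits one: take the $3$-fold blow-up of a directed $4$-cycle, replacing each arc by an oriented $K_{3,3}$, and connect each blow-up triple by a directed triangle. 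This digraph lies in $\text{Forb}_\text{ind}(\bivec{K}_2,S_2^+,\vec{K}_4^s)$ and has dichromatic number $3$. Without producing and verifying such a construction, your proposal establishes nothing about the lower bound.

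For the upper bound, your structural observations are sound and are the natural first steps: by Moon's theorem the strong components of a $\vec{K}_4^s$-free tournament are singletons or directed triangles, so every nonempty out-neighbourhood has a top component that is a vertex or a $\vec{C}_3$, and any vertex adjacent to all three vertices of a directed triangle must dominate it or be dominated by it. The plan of rebuilding the machinery of Section~\ref{sec:outransitive} (analogues of Lemma~\ref{lem:out-module_contraction}, Lemma~\ref{lem:arcaddition} and Lemma~\ref{lem:out-moduleinin}, followed by a precoloured minimal-counterexample induction with three colours) is also the obvious line of attack. But, as you yourself concede, the two decisive steps --- manufacturing genuine out-modules when the top component is a triangle (the analogue of Claim~\ref{claim:cycle_out-module}), and formulating an inductive colouring invariant that survives contracting a structure which internally requires two colours --- are exactly where the plan stops. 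An outline whose key lemmas are flagged as ``a real hurdle'' and as requiring ``a genuinely new idea'' reproduces the paper's own assessment of why the problem is open; it does not resolve it. The gap is therefore total: neither inequality of the conjecture is established by your proposal.
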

Here, a tight lower bound would be provided by the following construction: Take a $3$-fold blow-up of a directed $4$-cycle (every arc being replaced by an oriented $K_{3,3}$) and connect each of the three blow-up triples by a directed triangle. This oriented graph is contained in $\text{Forb}_\text{ind}(\bivec{K}_2,S_2^+,\vec{K}_4^s)$ and has dichromatic number $3$.

Let us further remark at this point that there exists a very simple proof that if we exclude \emph{both} $S_2^+$ and $S_2^-$, i.e., we consider \emph{locally complete} oriented graphs (where the in- \emph{and} out-neigborhood of every vertex induces a tournament), then we can show that the exclusion of any hero indeed bounds the dichromatic number as follows.
\begin{remark} 
For any hero $H$, we have $$\vec{\chi}(\text{Forb}_\text{ind}(\bivec{K}_2,S_2^+, S_2^-,H)) \le 2\vec{\chi}(\text{Forb}_\text{ind}(\bivec{K}_2,\overline{K}_2,H))<\infty.$$
\end{remark}
\begin{proof}
By the result of Berger et al.~\cite{heros} we have $C:=\vec{\chi}(\text{Forb}_\text{ind}(\bivec{K}_2,\overline{K}_2,H))<\infty$. Let us now prove that  $\vec{\chi}(\text{Forb}_\text{ind}(\bivec{K}_2,S_2^+, S_2^-,H)) \le 2C$. Towards a contradiction suppose that $\vec{\chi}(D)>2C$ for some $D \in\text{Forb}_\text{ind}(\bivec{K}_2,S_2^+, S_2^-,H)$, and pick $D$ such that $v(D)$ is minimum. Pick $v \in V(D)$ arbitrarily and define $D':=D-(\{v\} \cup N_D(v))$. 

Since $v(D')<v(D)$, there exists an acyclic $2C$-coloring $c':V(D') \rightarrow \{1,\ldots,2C\}$ of $D'$. Since $D$ is induced $S_2^+, S_2^-$-free, we further know that $D^+:=D[\{v\} \cup N_D^+(v)]$ and $D^-:=D[N_D^-(v)]$ are tournaments excluding $H$. It follows from the definition of $C$ that there exists an acyclic $C$-coloring $c^+:V(D^+) \rightarrow \{1,\ldots, C\}$ of $D^+$ as well as an acyclic coloring $c^-:V(D^-) \rightarrow \{C+1,\ldots,2C\}$ of $D^-$. Let $c$ be the $2C$-coloring of $D$ defined as the common extension of $c',c^+,c^-$ to $V(D)$. Since $\vec{\chi}(D)>2C$ there exists a directed cycle $C$ which is monochromatic under $c$. Since $c', c^+, c^-$ are acyclic colorings and since the color sets used by $c^+$ and $c^-$ are disjoint, we must have $V(C) \cap (\{v\} \cup N_D(v)) \neq \emptyset$, $V(C) \setminus (\{v\} \cup N_D(v)) \neq \emptyset$. Since all in-neighbors of $v$ have a distinct color from $v$, we further have $v \notin V(C)$. We conclude that there are vertices $x_1,x_2 \in V(C) \cap N_D(v)$, $y_1,y_2 \in V(C)\setminus (\{v\} \cup N_D(v))$ such that $(x_1,y_1), (y_2,x_2) \in A(C)$. We claim that we must have $x_1 \in N_D^+(v)$ and $x_2 \in N_D^-(v)$. Indeed, otherwise we would have $(x_1,v) \in A(D)$ or $(v,x_2) \in A(D)$, and then either the vertices $x_1,v,y_1$ induce an $S_2^+$ in $D$, or $x_2,v,y_2$ induce an $S_2^-$ in $D$, in each case yielding a contradiction to $D \in \text{Forb}_\text{ind}(\bivec{K}_2,S_2^+, S_2^-,H)$. Finally, we conclude that $c(x_1)=c^+(x_1) \le C<c^-(x_2)=c(x_2)$, a contradiction to the facts that $C$ is monochromatic and $x_1,x_2 \in V(C)$. This shows that our initial assumption concerning the existence of $D$ was wrong, concluding the proof of the remark.
\end{proof}

In the last section of this paper we investigated oriented graphs excluding the anti-directed $4$-vertex-path $P^+(1,1,1)$. It would certainly be very interesting and insightful to generalize both Theorem~\ref{thm:N} as well as the result of Aboulker et al.~concerning $\vec{P}_4$ by proving that $\{\bivec{K}_2,\vec{P}_4,\vec{K}_k\}$ and $\{\bivec{K}_2,P^+(1,1,1),\vec{K}_k\}$ are heroic for all $k \ge 4$.
\paragraph*{Acknowledgements.} My sincerest thanks go to Lior Gishboliner and Tibor Szab\'{o} for stimulating and fruitful discussions on the topic which contributed to the results presented in this paper. I would also like to thank Lior Gishboliner for improving the presentation of some parts of the write-up.


\begin{thebibliography}{99}
\bibitem{aboulker}
    P. Aboulker, P. Charbit and R. Naserasr. Extension of the Gy\'{a}rf\'{a}s-Sumner conjecture to digraphs. \emph{arXiv preprint}, arXiv:2009.13319, 2020.
    \bibitem{aboulker2} 
    P. Aboulker, N. Cohen, F. Havet, W. Lochet, P. F. S. Moura and S. Thomass\'{e}. Subdivisions in digraphs of large outdegree or large dichromatic number. \emph{Electronic Journal of Combinatorics}, 26(3), P3.19, 2019.
    \bibitem{aboulker3}
    P. Aboulker, F. Havet, K. Knauer and C. Rambaud. On the dichromatic number of surfaces. \emph{arXiv preprint}, arXiv:2102.01034, 2021.
    \bibitem{andres}
    S. D. Andres and W. Hochst\"{a}ttler. Perfect digraphs. \emph{Journal of Graph Theory}, 79(1), 21--29, 2015. 
    \bibitem{bangjensen2} J. Bang-Jensen. Locally Semicomplete Digraphs and Generalizations. \emph{Chapter 6 of Book ``Classes of Directed Graphs'' (J. Bang-Jensen and G. Gutin ed.)}, Springer Monographs in Mathematics, ISBN: 978-3-319-71839-2, 2018.
    \bibitem{bangjensen} 
    J. Bang-Jensen, T. Bellitto, T. Schweser and M. Stiebitz. On DP-colorings of digraphs. \emph{Journal of Graph Theory}, 95(1), 76--98, 2020.
    \bibitem{heros} 
    E. Berger, K. Choromanski, M. Chudnovsky, J. Fox, M. Loebl, A. Scott, P. Seymour and S. Thomass\'{e}. Tournaments and colouring. \emph{Journal of Combinatorial Theory, Series B}, 103(1), 1--20, 2013.
    \bibitem{bokal} D. Bokal, G. Fijavz, M. Juvan, P. M. Kayll and B. Mohar. The circular chromatic number of a digraph. \emph{Journal of Graph Theory}, 46(3), 227--240, 2004.
    \bibitem{chudnovsky2}
   M. Chudnovsky, A. Scott and P. Seymour. Induced subgraphs of graphs with large chromatic number. XI. Orientations. \emph{European Journal of Combinatorics}, 76, 53--61, 2019.
    \bibitem{chudnovsky1}
    M. Chudnovsky, A. Scott and P. Seymour. Induced subgraphs of graphs with large chromatic number. XII. Distant stars. \emph{Journal of Graph Theory}, 92(3), 237--254, 2019.
    \bibitem{erdos} 
P. Erd\H{o}s. Graph theory and probability. \emph{Canadian Journal of Mathematics}, 11, 34--38, 1959.
\bibitem{erdos2}
P. Erd\H{o}s. Problems and results in number theory and graph theory. \emph{Congress. Numer. XXVII, (Proc. 9th Manitoba Conf. Num. Math. Comput. 1979) Utilitas Math., Winnipeg}, 3--21, 1980. 
    \bibitem{gyarfas} A. Gy\'{a}rf\'{a}s. On Ramsey covering-numbers. \emph{Infinite and finite sets (Colloq., Keszthely, 1973, dedicated to P. Erd\H{o}s on his 60th birthday), Vol. II}. Colloq. Math. Soc. J\'{a}nos Bolyai, 10, Amsterdam: North-Holland, 801--816, 1975. 
    \bibitem{harutyunyan1}
    A. Harutyunyan and B. Mohar. Two results on the digraph chromatic number. \emph{Discrete Mathematics}, 312(10), 1823--1826, 2012. 
    \bibitem{harutyunyan2} A. Harutyunyan, T.-N. Le, A. Newman and S. Thomass\'{e}. Coloring dense digraphs. \emph{Combinatorica}, 39, 1021--1053, 2019.
    \bibitem{harutyunyan3} A. Harutyunyan, T.-N. Le, S. Thomass\'{e} and H. Wu. Coloring tournaments: From local to global. \emph{Journal of Combinatorial Theory, Series B}, 138, 166--171, 2019.
    \bibitem{hochstattler} W. Hochst\"{a}ttler. A flow theory for the dichromatic number. \emph{European Journal of Combinatorics}, 66, 160--167, 2017.
    \bibitem{kierstead} H. A. Kierstead and S. G. Penrice. Radius two trees specify $\chi$-bounded classes. \emph{Journal of Graph Theory}, 18(2), 119--129, 1994.
    \bibitem{mohar} Eigenvalues and colorings of digraphs. \emph{Linear Algebra and Its Applications}, 432(9), 2273-2277, 2010.
    \bibitem{moharwu} B. Mohar and H. Wu. Dichromatic number and fractional chromatic number. \emph{Forum of Mathematics, Sigma}, 4, e32, 2016.
   \bibitem{neumannlara}
   V. Neumann-Lara. The dichromatic number of a digraph. \emph{Journal of Combinatorial Theory, Series B}, 33(3), 265--270, 1982. 
   \bibitem{scott} A. Scott. Induced trees in graphs of large chromatic number. \emph{Journal of Graph Theory}, 24, 297--311, 1997.
   \bibitem{scott2} A. Scott and P. Seymour. Induced subgraphs of graphs with large chromatic number XIII. New brooms. \emph{European Journal of Combinatorics}, 84, 103024, 2020.
    \bibitem{sumner} D. P. Sumner. Subtrees of a graph and the chromatic number. \emph{The theory and applications of graphs (Kalamazoo, Mich., 1980)}.  Wiley, New York, 557-576, 1981.
\end{thebibliography}
\end{document}